\theoremstyle{plain}
\newtheorem{theorem}{Theorem}
\newtheorem*{thmslit}{Theorem \ref{slit}}
\newtheorem*{thmnonslit}{Theorem \ref{nonslit}}
\newtheorem*{thmJohnprop}{Theorem \ref{Johnprop}}
\newtheorem{lemma}[theorem]{Lemma}
\theoremstyle{definition}
\newtheorem{definition}[theorem]{Definition}
\theoremstyle{remark}
\newtheorem{question}{Question}
\newcommand{\lp}{\left(}
\newcommand{\rp}{\right)}
\newcommand{\e}{\epsilon}
\newcommand{\R}{\mathbb{R}}
\newcommand{\C}{\mathbb{C}}
\newcommand{\Hy}{\mathbb{H}}
\newcommand{\D}{\mathbb{D}}
\newcommand{\Z}{\mathbb{Z}}
\newcommand{\N}{\mathbb{N}}
\newcommand{\cl}{\overline}
\newcommand{\ti}{\textit}
\let\Re\relax
\let\Im\relax
\let\mod\relax
\DeclareMathOperator{\dist}{\textup{\text{dist}}}
\DeclareMathOperator{\diam}{\textup{\text{diam}}}
\DeclareMathOperator{\id}{\textup{\text{id}}}
\DeclareMathOperator{\supp}{\textup{\text{supp}}}
\DeclareMathOperator{\hcap}{\textup{\text{hcap}}}
\DeclareMathOperator{\mod}{\textup{\text{mod}}}
\DeclareMathOperator{\SLE}{\textup{\text{SLE}}}
\DeclareMathOperator{\Re}{\textup{\text{Re}}}
\DeclareMathOperator{\Im}{\textup{\text{Im}}}
\DeclareMathOperator{\length}{\textup{\text{length}}}
\DeclareMathOperator{\Area}{\textup{\text{Area}}}
\DeclareMathOperator{\Lip}{\textup{\text{Lip}}}
\DeclareMathOperator{\rad}{\textup{\text{rad}}}
\numberwithin{equation}{section}
\numberwithin{theorem}{section}
\begin{document}

\title{Loewner chains and H\"older geometry}
\author{Kyle Kinneberg}
\address{Department of Mathematics, Rice University, 6100 Main St., Houston TX 77005}
\email{kyle.kinneberg@rice.edu}

\subjclass[2010]{Primary: 30C20; Secondary: 30C45}
\date{\today}
\keywords{Loewner equation, H\"older domains, John domains}

\maketitle

\begin{abstract}
The Loewner equation provides a correspondence between continuous real-valued functions $\lambda_t$ and certain increasing families of half-plane hulls $K_t$. In this paper we study the deterministic relationship between specific analytic properties of $\lambda_t$ and geometric properties of $K_t$. Our motivation comes, however, from the stochastic Loewner equation ($\SLE_{\kappa}$), where the associated function $\lambda_t$ is a scaled Brownian motion and the corresponding domains $\Hy \backslash K_t$ are H\"older domains. We prove that if the increasing family $K_t$ is generated by a simple curve and the final domain $\Hy \backslash K_T$ is a H\"older domain, then the corresponding driving function has a modulus of continuity similar to that of Brownian motion. Informally, this is a converse to the fact that $\SLE_{\kappa}$ curves are simple and their complementary domains are H\"older, when $\kappa < 4$. We also study a similar question outside of the simple curve setting, which informally corresponds to the $\SLE$ regime $\kappa > 4$. In the process, we establish general geometric criteria that guarantee that $K_t$ has a $\Lip(1/2)$ driving function.
\end{abstract}

\section{Introduction} \label{intro}

Let $\Hy = \{z \in \C : \Im(z) > 0\}$ denote the upper half-plane in $\C$. A set $K \subset \Hy$ is a \ti{half-plane hull} if it is bounded and $\Hy \backslash K$ is a simply connected domain, i.e., is simply connected, connected, and open. The Loewner differential equation provides a correspondence between certain one-parameter families of growing hulls, $K_t$, and continuous real-valued functions $\lambda_t$. Under this correspondence, the (properly normalized) Riemann maps $g_t \colon \Hy \backslash K_t \rightarrow \Hy$ solve an ODE that is ``driven" by the function $\lambda_t$. An important point of investigation is the relationship between geometric properties of these hulls and analytic properties of the associated driving function, or driving term.

Classically, interest in the Loewner equation was motivated by questions about simply connected domains and their corresponding Riemann maps. For example, if one can view a given domain $\Omega = \Hy \backslash K$ as the final object in a continuously shrinking chain of domains $\Omega_t = \Hy \backslash K_t$, starting with $\Omega_0 = \Hy$, where the evolution is governed by a particular ODE, then one may hope to deduce properties of $\Omega$ from properties of the ODE. In this way, the driving term becomes a tool for analyzing the evolution process.

More recently, however, interest in Loewner's equation has been renewed by reversing this correspondence. Given a continuous function $\lambda_t$, one can solve the associated ODE to obtain Riemann maps $g_t$, corresponding to domains $\Hy \backslash K_t$ which shrink continuously in $t$. An interesting \ti{stochastic} process arises when $\lambda_t$ is equal to a Brownian motion, scaled by a parameter $\kappa>0$. In this case, the corresponding family of hulls is described by a random curve in $\Hy$, called an $\SLE_{\kappa}$ curve. For various values of $\kappa$, these curves have been shown (or are conjectured) to arise as scaling limits of certain discrete random planar processes.

The development of the $\SLE$ theory has rejuvenated interest in the deterministic theory as well. More specifically, the curves arising from $\SLE_{\kappa}$ processes are examples of fractal sets in the plane (cf. \cite{RS05, Bef08}), and the corresponding driving function (Brownian motion) is highly non-smooth. In this spirit, there has been an effort to understand better the deterministic Loewner correspondence when the driving terms and the corresponding hulls are allowed to be non-smooth.

In the present paper, we take up this topic, motivated by specific properties of $\SLE$ curves and their stochastic driving terms. Despite the fact that both of these objects are non-smooth, each does enjoy some type of regularity: Brownian motion is in the class of weak-$\Lip(1/2)$ functions; and the $\SLE_{\kappa}$ domains $\Hy \backslash K_t$ are, almost surely, H\"older continuous conformal images of $\Hy$. Our goal here is to investigate the deterministic correspondence between driving functions that are weak-$\Lip(1/2)$ and families of half-plane hulls whose complementary domains are H\"older domains. To make this precise, we will need to discuss more background and some technical terminology.

\subsection{Background}

Let $K \subset \Hy$ be a half-plane hull, so that $\Hy \backslash K$ is a simply connected domain. There is a unique conformal map $f \colon \Hy \rightarrow \Hy \backslash K$ with the expansion
\begin{equation} \label{hydrof}
f(z) = z - \frac{a_1}{z} +O \lp \frac{1}{z^2} \rp, \hspace{0.3cm} \text{as } |z| \rightarrow \infty
\end{equation}
where $a_1 \geq 0$ is a constant. The \ti{half-plane capacity} of $K$ is, by definition, 
$$\hcap(K) := a_1.$$ 
We will see later that this is a measure of the size of $K$, seen ``from infinity." As a basic example, we note that $\hcap(K) = 0$ if and only if $K=\emptyset$, in which case $f=\id_{\Hy}$. Moreover, $\hcap$ is monotonic, in the sense that $\hcap(K) \leq \hcap(K')$ whenever $K$ and $K'$ are hulls with $K \subset K'$. 

Observe that the inverse map $g = f^{-1} \colon \Hy \backslash K \rightarrow \Hy$ has the form
\begin{equation} \label{hydrog}
g(z) = z + \frac{a_1}{z} + O \lp \frac{1}{z^2} \rp, \hspace{0.3cm} \text{as } |z| \rightarrow \infty.
\end{equation}
In this paper, we frequently use the normalizations in \eqref{hydrof} and \eqref{hydrog}. Following standard terminology, we say that such maps $f$ and $g$ are \ti{hydrodynamically normalized}.

Let $\lambda_t$ be a continuous, real-valued function defined on a bounded interval $[0,T]$. The associated Loewner equation is
\begin{equation} \label{LE}
\partial_t g_t(z) = \frac{2}{g_t(z) - \lambda_t}, \hspace{0.4cm} g_0(z) = z \in \Hy.
\end{equation}
The solutions to this initial value problem define, for each fixed $t$, a hydrodynamically normalized conformal map $g_t \colon \Hy \backslash K_t \rightarrow \Hy$, where $K_t$ is a half-plane hull \cite[Section 4.1]{Law05}. It is easy to see that $K_0 = \emptyset$, as $g_0 = \id_{\Hy}$. Moreover, the hulls form an increasing family, in the sense that $K_s \subsetneq K_t$ whenever $0 \leq s < t \leq T$. This implies that $\hcap(K_t)$ is increasing in $t$. Indeed, more is true: $\hcap(K_t) = 2t$ for each $t$. The fact that half-plane capacity increases continuously means, geometrically, that the hulls are ``growing continuously." We see, then, that the family of hulls produced by a continuous driving term is an example of a \ti{geometric Loewner chain}, according to the following definition.

\begin{definition}
A one-parameter family of half-plane hulls $K_t$, for $0 \leq t \leq T$, is called a \ti{(geometric) Loewner chain} if
\begin{enumerate}
\item[\textup{(i)}] $K_0 = \emptyset$,
\item[\textup{(ii)}] $K_s \subsetneq K_t$ for each $0 \leq s <t \leq T$, and
\item[\textup{(iii)}] $\hcap(K_t) = 2t$ for each $t$.
\end{enumerate}
\end{definition}

It is not true, however, that \ti{every} geometric Loewner chain corresponds to a continuous driving function. In fact, having a driving function places strong restrictions on how the hulls grow infinitesimally. A good example of Loewner chains that do have a continuous driving term are those that are \ti{generated by a curve}, i.e., there is a path $\gamma \colon [0,T] \rightarrow \cl{\Hy}$ for which $\Hy \backslash K_t$ is the unbounded component of $\Hy \backslash \gamma[0,t]$ for each $t$. In the case that $\gamma$ is a simple curve, this chain has an important property: whenever $0<t-s \ll 1$ is very small, the set $K_t \backslash K_s$ has small diameter. In particular, $K_t \backslash K_s$ can be separated from $\infty$ by a cross-cut in $\Hy \backslash K_s$ with diameter going to 0 as $|s-t| \rightarrow 0$. The following theorem, originally proved by C. Pommerenke for Loewner's equation in the unit disk, says that this property characterizes the Loewner chains that correspond to a continuous driving function. See \cite[Theorem 2.6]{LSW01} for a proof of what we state here.

\begin{theorem}[Pommerenke \cite{Pom66}] \label{Pom}
Let $K_t$ be a geometric Loewner chain for $0 \leq t \leq T$. The following are equivalent.
\begin{enumerate}
\item[\textup{(i)}] The hydrodynamic conformal maps $g_t \colon \Hy \backslash K_t \rightarrow \Hy$ satisfy \eqref{LE} for some continuous function $\lambda_t$.
\item[\textup{(ii)}] For each $0\leq s < t \leq T$, the set $K_t \backslash K_s$ can be separated from $\infty$ by a cross-cut in $\Hy \backslash K_s$ of diameter $\leq \omega_1(t-s)$, where $\omega_1(\delta) \rightarrow 0$ as $\delta \rightarrow 0$.
\item[\textup{(iii)}] For each $0\leq s < t \leq T$, the transition hull $K_{s,t} = g_s(K_t\backslash K_s)$ has diameter $\leq \omega_2(t-s)$, where $\omega_2(\delta) \rightarrow 0$ as $\delta \rightarrow 0$.
\end{enumerate}
\end{theorem}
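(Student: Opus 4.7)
The plan is to establish the cycle (i) $\Rightarrow$ (iii) $\Rightarrow$ (ii) $\Rightarrow$ (i). The first two implications are quantitative distortion estimates; the main obstacle is the final one, where a continuous driving function must be produced from purely geometric data.

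For (i) $\Rightarrow$ (iii), I would observe that the transition maps $g_{s,t} := g_t \circ g_s^{-1}$ are themselves hydrodynamic conformal maps from $\Hy \setminus K_{s,t}$ onto $\Hy$, and that the family $\{g_s(K_{s+u} \setminus K_s)\}_{u \in [0, T-s]}$ is a Loewner chain driven by $u \mapsto \lambda_{s+u}$. Since $\hcap(K_{s,t}) = 2(t-s)$, the imaginary part of any point in $K_{s,t}$ is bounded by $2\sqrt{t-s}$ (the extremal case being a vertical slit), while the real extent of $K_{s,t}$ is controlled by the oscillation of $\lambda$ on $[s,t]$. Together these bounds give $\diam(K_{s,t}) \leq \omega_2(t-s)$ with $\omega_2$ determined by the modulus of continuity of $\lambda$.

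For (iii) $\Rightarrow$ (ii), if $K_{s,t}$ lies in a half-disk $\{|z - x_0| < r\} \cap \Hy$ with $r$ comparable to $\omega_2(t-s)$, then the preimage $\gamma = g_s^{-1}(\{|z-x_0| = 2r\} \cap \Hy)$ is a cross-cut in $\Hy \setminus K_s$ separating $K_t \setminus K_s$ from infinity. To bound $\diam(\gamma)$ uniformly in $s$, I would use conformal invariance of modulus: the half-annulus between the semicircles of radii $2r$ and $R$ has modulus of order $\log(R/r)$, and since $g_s^{-1}(z) = z + O(1/|z|)$ at infinity with constant controlled by $\hcap(K_s) \leq 2T$, the image of the large semicircle has uniformly bounded diameter. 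Comparing moduli then forces $\diam(\gamma) \leq \omega_1(t-s)$ with $\omega_1(\delta) \to 0$.

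The core of the argument is (ii) $\Rightarrow$ (i). The first step is to upgrade (ii) to the transition-hull form: the forward image $g_s(C_{s,t})$ of a small cross-cut is itself a cross-cut in $\Hy$ enclosing $K_{s,t}$, and the dual modulus comparison bounds its diameter, hence $\diam(K_{s,t})$. With this in hand, the nested closures $\cl{K_{s,t'}}$ for $s < t' < t$ have diameter tending to $0$ and meet $\R$, so they collapse to a single real point, which I take as the definition of $\lambda_s$; continuity of $t \mapsto \lambda_t$ is immediate from the bound. To derive the Loewner ODE I would invoke a first-order expansion for hydrodynamic maps of small hulls: if $\tilde K \subset \{|z-x| < r\} \cap \Hy$ has capacity $2h$, then its hydrodynamic map $\tilde g$ satisfies
\begin{equation*}
\tilde g(w) - w = \frac{2h}{w - x} + o(h)
\end{equation*}
uniformly on compact subsets of $\Hy$ bounded away from $x$. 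Applying this to $g_{s,s+h}$ at $w = g_s(z)$ with $x = \lambda_s$ and letting $h \to 0$ produces $\partial_t g_t(z) = 2/(g_t(z) - \lambda_t)$. The delicate point is uniformity of the error term, which I would extract from the diameter bound $\omega_2(h)$ together with the inclusion $\lambda_s \in \cl{K_{s, s+h}}$.
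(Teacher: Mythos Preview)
The paper does not give its own proof of this theorem; it attributes the result to Pommerenke and cites \cite[Theorem 2.6]{LSW01} for a proof in the half-plane setting. The only proof-adjacent content the paper adds is the remark that (ii) and (iii) are equivalent via Wolff's lemma on length distortion under conformal maps (\cite[Proposition 2.2]{Pom92}). Your sketch is the standard argument one finds in those references: the cycle (i) $\Rightarrow$ (iii) $\Rightarrow$ (ii) $\Rightarrow$ (i), with (i) $\Rightarrow$ (iii) coming from the capacity bound on the height of $K_{s,t}$ together with control on its horizontal extent by the oscillation of $\lambda$, and (ii) $\Rightarrow$ (i) obtained by defining $\lambda_s$ as the collapse point of the shrinking transition hulls and then reading off the ODE from the first-order expansion of a small-hull hydrodynamic map. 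Your modulus comparison for (ii) $\Leftrightarrow$ (iii) is exactly Wolff's lemma in disguise, so you are aligned with the paper's one substantive comment. There is nothing further to compare.
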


We should remark that condition (iii) does not explicitly appear in Pommerenke's statement or in the statement of \cite[Theorem 2.6]{LSW01}. However, the proof of the equivalence of (i) and (ii) essentially goes through condition (iii). Moreover, it is easy to see that (ii) and (iii) are equivalent, using, for example, Wolff's lemma about length distortion under conformal maps (cf. \cite[Proposition 2.2]{Pom92}).

Pommerenke's theorem sets up a correspondence between continuous driving functions and Loewner chains with harmonically small ``tails" $K_t \backslash K_s$. Let us turn to a few of the finer properties of this correspondence by imposing stronger geometric properties on the hulls or stronger analytic properties on the driving terms.

The first results we should mention in this context concern classical smoothness. Suppose that $K_t$ is a geometric Loewner chain generated by a simple curve, so that $K_T$ is a simple curve itself. In \cite{EE01}, C. Earle and A. Epstein proved that if $K_T$ has a $C^n$ parameterization, for $n \geq 2$, then the associated driving function is $C^{n-1}$. Conversely, if $\lambda_t$ is in the class $C^{\beta}$, with $\beta > 1/2$ and $\beta + 1/2 \notin \N$, then the corresponding simple curve, parameterized by half-plane capacity, is in $C^{\beta + 1/2}$. The case $\beta \leq 2$ was demonstrated by C. Wong \cite{Won14}, and the result for $\beta > 2$ was recently shown by J. Lind and H. Tran \cite{LT14}.

Closer to our interests in this paper, there have been several results dealing with strictly fractional smoothness of $\lambda_t$. Due to certain scaling symmetries in the chordal Loewner equation, it is natural to study the H\"older class $\Lip(1/2)$. Namely, let
$$|| \lambda ||_{1/2} = \sup_{0 \leq s\neq t \leq T} \frac{| \lambda_s - \lambda_t|}{\sqrt{|s-t|}}$$
be the $\Lip(1/2)$ semi-norm. We say that $\lambda$ is in $\Lip(1/2)$ if $|| \lambda ||_{1/2} < \infty$. The following theorem gives a relationship between $\Lip(1/2)$ driving terms and Loewner chains that are generated by quasi-arcs.

\begin{theorem}[Marshall--Rohde \cite{MR05}, Lind \cite{Lin05}, Rohde--Tran--Zinsmeister \cite{RTZ13}] \label{MRL}
If $K_t$ is a geometric Loewner chain generated by a quasi-arc that meets $\R$ non-tangentially, then the corresponding driving term is in $\Lip(1/2)$. Conversely, if $\| \lambda \|_{1/2} < 4$, then the corresponding Loewner chain is generated by a quasi-arc that meets $\R$ non-tangentially.
\end{theorem}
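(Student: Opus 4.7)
For the forward direction, the plan is to bound $|\lambda_s - \lambda_t|$ by the diameter of the transition hull $K_{s,t} = g_s(K_t \setminus K_s)$, which satisfies $\hcap(K_{s,t}) = 2(t-s)$ by capacity additivity. Since $\gamma$ is a quasi-arc and $g_s$ is conformal, $K_{s,t}$ is a sub-arc of a quasi-arc in $\cl{\Hy}$ attached to $\R$ at $\lambda_s$, and the non-tangential hypothesis on $\gamma$ at its base point propagates, via standard boundary distortion of conformal maps, to quasi-arc constants for $K_{s,t}$ that are uniform in $s,t$. A bounded-turning arc meeting $\R$ non-tangentially at a single point satisfies $\hcap \asymp \diam^2$, so $\diam(K_{s,t}) \leq C\sqrt{t-s}$. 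Since both $\lambda_s$ and the prime end of $K_{s,t}$ corresponding to $\lambda_t$ lie in the closure of $K_{s,t}$, this gives $|\lambda_s - \lambda_t| \leq C\sqrt{t-s}$.

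The converse is the heart of the theorem and substantially deeper. The plan is to fix $T>0$ and introduce the reverse Loewner flow
\begin{equation*}
\partial_t h_t(z) = -\frac{2}{h_t(z) - \lambda_{T-t}}, \quad h_0(z) = z,
\end{equation*}
whose time-$T$ solution $h_T$ coincides with $f_T = g_T^{-1}$. The candidate generating curve is $\gamma(t) = \lim_{y \downarrow 0} f_t(\lambda_t + iy)$, where $f_t = g_t^{-1}$. Under the assumption $\|\lambda\|_{1/2} < 4$, I would first show that this limit exists for each $t$ and that $\gamma$ is continuous, then verify that $\gamma$ satisfies the bounded-turning criterion characterizing quasi-arcs. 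The key intermediate step is to control the distortion of $f_t$ near the tip, i.e., to estimate $|f_t'(\lambda_t + iy)|$ uniformly in $t$ as $y \downarrow 0$. Non-tangentiality at $\R$ is the subtler part and follows from a refinement of the same estimate at $t = 0$, as carried out in \cite{RTZ13}.

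The main obstacle is producing the specific threshold $4$. The quantity $|f_t'(\lambda_t + iy)|$ is governed, after passing through the reverse flow and applying the chain rule, by an ODE in which a destabilizing term proportional to $\|\lambda\|_{1/2}$ competes against a stabilizing conformal-contraction term; the threshold $\|\lambda\|_{1/2} = 4$ is precisely where these balance, and the examples $\lambda_t = c\sqrt{1-t}$ with $c \geq 4$ show that the constant cannot be improved without further hypotheses. When the semi-norm is strictly less than $4$, one extracts uniform bounds on $|f_t'|$ that translate into quasi-symmetric distortion of Euclidean circles centered at $\lambda_t + iy$, which in turn yields the bounded-turning property of $\gamma$. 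In practice, I would follow the ODE analysis introduced in \cite{MR05}, sharpened to the constant $4$ in \cite{Lin05}, and supplement it with the boundary refinement of \cite{RTZ13} for non-tangentiality.
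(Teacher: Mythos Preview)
The paper does not prove this theorem; it is quoted as background from \cite{MR05}, \cite{Lin05}, and \cite{RTZ13}, with no argument given beyond the citations. So there is no ``paper's own proof'' to compare your proposal against.

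That said, your outline is a fair high-level summary of how those references proceed. The forward direction is indeed obtained by transferring the quasi-arc/non-tangential geometry to the transition hulls $K_{s,t}$ and using $\hcap \asymp \diam^2$ there; the converse is exactly the reverse-flow derivative analysis of $|f_t'(\lambda_t+iy)|$ from \cite{MR05}, with the sharp constant $4$ coming from \cite{Lin05} and the non-tangential refinement from \cite{RTZ13}. One caution on the forward direction: the step ``the non-tangential hypothesis on $\gamma$ at its base point propagates, via standard boundary distortion of conformal maps, to quasi-arc constants for $K_{s,t}$ that are uniform in $s,t$'' is precisely the nontrivial part and is not quite ``standard''; in \cite{MR05} this uniformity is obtained through a conformal-welding/quasisymmetry argument rather than a generic boundary-distortion lemma. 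If you were to write out the proof, that is the place where real work is needed.
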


In particular, this theorem ensures that if $\| \lambda \|_{1/2} < 4$, then the corresponding Loewner chain is generated by a simple curve. Finding weaker conditions on $\lambda_t$ that still give a simple curve is fairly wide open and seems surprisingly difficult. For a statement about chains generated by curves that are Lipschitz graphs, see \cite[Theorem 1.2]{RTZ13}.

Coming from the opposite direction, J. Lind and S. Rohde have studied driving functions that generate a space-filling curve, finding a second ``transition value" for the $\Lip(1/2)$ norm. Namely, if $\lambda$ is a $\Lip(1/2)$ function that generates a curve with non-empty interior, then $\| \lambda \|_{1/2} \geq 4.0001$. The constant 4.0001 here is most likely not optimal. In the same paper, Lind and Rohde gave interesting (though technical) geometric criteria for a Loewner chain to have a $\Lip(1/2)$ driving term. We state it here, as it motivates one of the results in this paper.

\begin{theorem}[Lind--Rohde \cite{LR12}] \label{LR1/2}
Let $K_t$ be a geometric Loewner chain, generated by a continuous driving term $\lambda_t$, for $0 \leq t \leq T$. Suppose that there are constants $C_0 > 0$ and $k < \infty$ such that for each $0\leq s < T$, there is a $k$-quasi-disk $D_s \subset \Hy \backslash K_s$, with $\infty \in \cl{D_s}$, for which
\begin{enumerate}
\item[\textup{(i)}] $K_T \backslash K_s \subset \cl{D_s}$ and
\item[\textup{(ii)}] $\diam(K_t \backslash K_s) \leq C_0 \max\{\dist(z,\partial D_s) : z \in K_t \backslash K_s \}$ for all $s < t \leq T$.
\end{enumerate}
Then $\| \lambda \|_{1/2} \leq C$, where $C < \infty$ is a constant depending only on $C_0$ and $k$.
\end{theorem}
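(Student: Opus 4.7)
The plan is to reduce the $\Lip(1/2)$ bound on $\lambda$ to a geometric estimate on the transition hulls $K_{s,t} := g_s(K_t \setminus K_s)$ and then to prove that estimate using Koebe distortion inside the quasi-disk $D_s$. The reduction is standard: since the restarted Loewner chain $\{K_{s,s+\tau}\}_{\tau \in [0,T-s]}$ is driven by $\lambda_{s+\tau}$ with $\lambda_s \in \cl{K_{s,t}} \cap \R$, continuity of $\lambda$ forces $|\lambda_t - \lambda_s| \lesssim \diam(K_{s,t})$, and since $\hcap(K_{s,t}) = 2(t-s)$, the theorem reduces to the purely geometric estimate $\diam(K_{s,t})^2 \lesssim \hcap(K_{s,t})$, with constants depending only on $C_0$ and $k$.

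Fix $s<t$, set $d := \diam(K_t\setminus K_s)$, and use hypothesis (ii) to select $z_0 \in K_t\setminus K_s$ with $r := \dist(z_0,\partial D_s) \geq d/C_0$. Since $B(z_0,r) \subset D_s \subset \Hy\setminus K_s$, the map $g_s$ is conformal on $B(z_0,r)$, and Koebe's $1/4$-theorem yields
$$\Im g_s(z_0) \geq \tfrac{1}{4}|g_s'(z_0)|\, r \geq \tfrac{d}{4C_0}|g_s'(z_0)|.$$
Thus $w_0 := g_s(z_0) \in K_{s,t}$ sits at height $\gtrsim |g_s'(z_0)|\, d$. The pivotal step, and the technical heart of the argument, is the matching upper bound $\diam(K_{s,t}) \lesssim |g_s'(z_0)|\, d$. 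Here one uses that $D_s$ is a $k$-quasi-disk, so that a Riemann map $\psi : \D \to D_s$ normalized to send $0$ to $z_0$ extends to a $k$-quasiconformal self-map of $\hat{\C}$ and in particular is $k$-quasisymmetric on $\partial \D$. This boundary regularity, together with hypothesis (ii), should translate the Euclidean diameter bound $|z - z_0| \leq d \leq C_0 r$ for $z \in K_t \setminus K_s$ into confinement of $\psi^{-1}(K_t \setminus K_s)$ to a closed sub-disk of $\cl{\D}$ of radius $\rho = \rho(C_0, k) < 1$. Koebe distortion for the univalent $g_s \circ \psi : \D \to \Hy$ on this sub-disk then yields $\diam(K_{s,t}) \leq C''(C_0, k) \cdot |(g_s \circ \psi)'(0)| = C''(C_0, k) \cdot |g_s'(z_0)| \cdot |\psi'(0)| \asymp |g_s'(z_0)| \cdot d$, where $|\psi'(0)| \asymp r$ follows from Koebe applied to $\psi$.

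Combining the two bounds, $\Im w_0 \gtrsim \diam(K_{s,t})$. Because $K_{s,t}$ is a Loewner transition hull attached to $\R$ at $\lambda_s$, the standard lower bound $\hcap(K) \gtrsim (\Im w)^2$ for hulls containing a point $w$ connected through $K$ to $\R$ then gives
$$2(t-s) = \hcap(K_{s,t}) \gtrsim (\Im w_0)^2 \gtrsim \diam(K_{s,t})^2,$$
as required. The main difficulty is the confinement claim in the previous paragraph: conformal maps on quasi-disks are only H\"older at the boundary, so one cannot expect a global Lipschitz-type bound on $g_s|_{D_s}$. Hypothesis (ii), which anchors the hull to a single point of definite ``depth'' in $D_s$ relative to its diameter, is precisely what converts the qualitative quasisymmetric control of $\psi$ into a quantitative confinement of $\psi^{-1}(K_t\setminus K_s)$, and hence into the uniform Lipschitz-type distortion needed; making this step quantitative in terms of $C_0$ and $k$ alone is where the two hypotheses of the theorem and the quasi-disk constant must be orchestrated together.
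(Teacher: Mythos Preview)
The paper does not prove Theorem \ref{LR1/2} directly; it is quoted from \cite{LR12} and then recovered as a special case of the paper's own Theorem \ref{Johnprop}, using that every point of a $k$-quasi-disk with $\infty$ in its closure is the tip of an $L(k)$-John curve to $\infty$. The proof of Theorem \ref{Johnprop} follows a route entirely different from yours: for each $w\in K_{s,t}$ the vertical line through $w$ is pulled back under $g_s$ to a hyperbolic geodesic $\ell\subset\Omega_s$ that meets $K_t\setminus K_s$, and a modulus-of-path-families argument along the John curve (Lemma \ref{CarrotRay}) gives $\rho_{\Omega_s}(z_0,\ell)\lesssim 1$, which by Lemma \ref{Hy} translates into $|\Re w-\Re w_0|\lesssim \Im w_0$. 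No Koebe distortion inside $D_s$ and no quasisymmetry of a Riemann map $\psi\colon\D\to D_s$ enter.

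Your reduction and the lower bound $\Im w_0\ge\tfrac14|g_s'(z_0)|\,r\ge\tfrac{d}{4C_0}|g_s'(z_0)|$ are correct. The gap is precisely the confinement claim, and it is not merely a technicality: hypothesis (i) only places $K_t\setminus K_s$ in $\cl{D_s}$, so $K_t\setminus K_s$ may meet $\partial D_s\cap\Omega_s$, and any such point is sent by $\psi^{-1}$ to $\partial\D$. Hence $\psi^{-1}(K_t\setminus K_s)$ need not lie in any disk $\cl{B}(0,\rho)$ with $\rho<1$, and Koebe distortion for $g_s\circ\psi$ on such a sub-disk gives no control there. Quasisymmetry of $\psi^{-1}$ bounds $|\psi^{-1}(z)|$ only by $\eta(|z-z_0|/r)\le\eta(C_0)$, which is typically $\ge 1$; nothing in your argument forces it below $1$. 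To salvage the approach you would have to control $g_s$ directly on $\cl{D_s}\cap B(z_0,C_0 r)$---for example by showing that $g_s|_{D_s}$, as a conformal map between Jordan domains with quasi-circle boundaries, is globally quasisymmetric with constants depending only on $k$---but that is a substantially different argument from the sub-disk Koebe step you outlined, and you have not carried it out.
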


The $\Lip(1/2)$ theme in the study of Loewner's equation has been motivated primarily by properties of the $\SLE$ processes we mentioned earlier. Indeed, the class $\Lip(1/2)$ allows for driving functions that generate non-smooth hulls and non-smooth curves. To discuss $\SLE$ more thoroughly, fix $\kappa > 0$ and let $\lambda_t = \sqrt{\kappa}B_t$, where $B_t$ is a one-dimensional Brownian motion. Almost surely, there is a corresponding geometric Loewner chain (cf. Pommerenke's theorem); moreover, this chain is generated by a curve, $\gamma$. It turns out that these random $\SLE_{\kappa}$ curves exhibit two phase transitions, at $\kappa =4$ and $\kappa = 8$ \cite{LSW04, RS05}:
\begin{enumerate}
\item[(i)] if $0 \leq \kappa \leq 4$, then almost surely $\gamma$ is simple and intersects $\R$ in only one point;
\item[(ii)] if $4 < \kappa < 8$, then almost surely $\gamma$ intersects itself but is not space-filling; and 
\item[(iii)] if $\kappa \geq 8$, then almost surely $\gamma$ is space-filling.
\end{enumerate}
Note that this behavior mirrors the transition values we discussed above for the $\Lip(1/2)$ norm.

The relationship between $\SLE$ curves and Loewner chains that are generated by $\Lip(1/2)$ functions is only motivational, though. Indeed, Brownian motion is, almost surely, \ti{not $\Lip(1/2)$}, and the corresponding $\SLE$ curves are, almost surely, \ti{not quasi-arcs}. It is true, however, that Brownian motion is \ti{almost} $\Lip(1/2)$:
\begin{equation} \label{Levy}
|B_s - B_t| \leq \sqrt{2|s-t| \log \lp 1/|s-t| \rp}
\end{equation}
for $|s-t|$ small enough. More generally, a function $\lambda$ is said to be in the weak-$\Lip(1/2)$ class if
$$|\lambda_s - \lambda_t | \leq \sqrt{|s-t|} \cdot \phi \lp 1/|s-t| \rp,$$
where $\phi(x) = o_{\e}(x^{\e})$ as $x \rightarrow \infty$ for all $\e > 0$. The bound in \eqref{Levy} shows that Brownian motion is weak-$\Lip(1/2)$ almost surely.

In addition to weak-type regularity for $\SLE$ driving terms, one can also say something nice about the geometry of $\SLE$ domains. For this we need a definition.

\begin{definition}
Let $\Omega = \Hy \backslash K$, where $K$ is a half-plane hull, and let $f \colon \Hy \rightarrow \Omega$ be the hydrodynamic conformal map. We say that $\Omega$ is a \ti{$(\beta,C_0)$-H\"older domain}, with $0 < \beta \leq 1$ and $C_0 > 0$ if
\begin{equation} \label{Holderf}
|f(z)-f(z')| \leq C_0 \max \{ |z-z'|^{\beta}, |z-z'| \}
\end{equation}
for all $z,z' \in \Hy$. We say that $\Omega$ is a \ti{H\"older domain} if it is a $(\beta,C_0)$-H\"older domain for some choice of $\beta$ and $C_0$.
\end{definition}

Note that if $\Omega$ is a H\"older domain, the hydrodynamic map $f \colon \Hy \rightarrow \Omega$ extends continuously to $\R$, thereby giving $\partial \Omega$ a H\"older continuous parameterization. We will discuss other geometric properties of H\"older domains in the next section. For now, we record the following theorem, which we mentioned informally earlier.

\begin{theorem}[Rohde--Schramm {\cite[Theorem 5.2]{RS05}}]
Let $K_t$ be an $\SLE_{\kappa}$ chain, for $\kappa \neq 4$. Then almost surely, the domains $\Omega_t = \Hy \backslash K_t$ are H\"older domains, with constants depending on $\kappa$, $t$, and on the randomness.
\end{theorem}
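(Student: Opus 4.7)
The plan is to work with the reverse Loewner flow and establish moment bounds on the conformal derivative. For fixed $T$, I would define the reverse flow $h_t$ by $\partial_t h_t(z) = -2/(h_t(z) - \sqrt{\kappa} B_t)$, $h_0(z) = z$. Brownian time-reversal gives that $h_T$ has the same distribution as the hydrodynamic map $f_T \colon \Hy \to \Omega_T$, modulo an additive real shift (which does not affect H\"older geometry). The strategy is then to estimate $|f_T'(z)|$ in terms of $y = \Im(z)$ and convert a derivative bound into a H\"older modulus for $f_T$ on compact subsets of $\cl{\Hy}$.

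The technical heart is an Ito computation. Writing $h_t(z) = X_t + iY_t$ and differentiating the reverse equation in $z$ yields an ODE for $h_t'(z)$. Setting $Z_t = X_t - \sqrt{\kappa} B_t$ and $\phi_t = Y_t / |h_t'(z)|$, one checks that the ratio $\phi_t/Y_t$ and quantities like $|h_t'(z)|$ satisfy tractable SDEs. I would then look for an algebraic family of functionals
\[
M_t = Y_t^{a} \, |h_t'(z)|^{p} \, \bigl(Y_t^2 + Z_t^2\bigr)^{b}
\]
and, by Ito's formula, tune $a, b$ as functions of $p$ and $\kappa$ so that $M_t$ is a local (super)martingale. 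Optimizing stopping times and extracting the boundary contribution, this gives bounds of the form $\mathbb{E}\bigl[|h_T'(z)|^p \cdot Y_T^a\bigr] \lesssim y^{a}$, which by Koebe distortion translate into $\mathbb{E}\bigl[|f_T'(z)|^p\bigr] \lesssim y^{-\gamma(p,\kappa)}$ for $z = x + iy$ in a compact window.

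Next, I would pass from these integral bounds to a deterministic pointwise estimate. Fix a compact interval $I \subset \R$ and let $y_n = 2^{-n}$. Combining the moment bound with a Markov inequality, a union bound over a $y_n$-net of $I \times \{y_n\}$, and the Borel--Cantelli lemma, I obtain an almost sure bound
\[
\sup_{x \in I} \, |f_T'(x + iy_n)| \leq C(\omega) \, y_n^{-\alpha}
\]
for some $\alpha = \alpha(\kappa) < 1$, where the key point is that the optimized exponent is strictly less than $1$ precisely when $\kappa \neq 4$. A standard Koebe argument then upgrades this to $|f_T'(z)| \leq C'(\omega) \Im(z)^{-\alpha}$ for all $z \in I \times (0,1]$, and integrating $|f_T'|$ along vertical segments and horizontal arcs yields the H\"older estimate $|f_T(z) - f_T(z')| \leq C''(\omega) |z - z'|^{1-\alpha}$ on $I \times [0,1]$, which combined with the hydrodynamic normalization $f_T(z) = z + O(1/z)$ at infinity gives the global H\"older bound \eqref{Holderf}.

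The main obstacle is the Ito step: one must choose the triple $(a,b,p)$ so that $M_t$ really is a supermartingale, and simultaneously arrange that the resulting exponent $\alpha(\kappa)$ is strictly less than $1$. The algebra exhibits a singularity at $\kappa = 4$, which is why that critical case must be excluded; handling the different behavior for $\kappa < 4$ (where $K_t$ is a simple curve and $Y_t$ stays positive) versus $\kappa > 4$ (where $h_t$ can swallow boundary points) will likely require splitting the argument, or at least carefully tracking the stopping time at which $Y_t$ becomes small relative to $|Z_t|$.
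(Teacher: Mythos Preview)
The paper does not prove this statement at all; it is quoted as background from Rohde--Schramm \cite[Theorem 5.2]{RS05} and no proof is given. Your proposal is precisely an outline of the Rohde--Schramm argument from that reference: the reverse flow, the martingale $M_t = Y_t^a |h_t'(z)|^p (Y_t^2+Z_t^2)^b$, the moment bound on $|f_T'|$, and the Borel--Cantelli passage to an almost-sure derivative estimate are exactly how they do it, and your remark that the optimized exponent crosses $1$ at $\kappa=4$ is the correct reason for the exclusion. So there is nothing to compare against in the present paper, and your sketch is faithful to the original source.

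One small correction to the last paragraph of your outline: the distinction between $\kappa<4$ and $\kappa>4$ is not really needed for this argument. The reverse flow $h_t$ always has $Y_t$ strictly increasing (since $\partial_t Y_t = 2Y_t/(Z_t^2+Y_t^2) > 0$), so there is no issue of ``$Y_t$ becoming small'' or of $h_t$ swallowing points; the phase transition in the forward direction does not appear in the reverse-flow derivative estimate, and the Ito/martingale computation goes through uniformly in $\kappa \neq 4$.
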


The main goal of this paper is to extend the themes taken up by Marshall, Rohde, Lind, Tran, and Zinsmeister to a context that deals with weak-$\Lip(1/2)$ driving terms and H\"older geometry of the corresponding domains. Ideally, one would like to find weak-type regularity assumptions on $\lambda$ that ensure that the associated domains are H\"older. This seems to be a difficult question, so we focus instead on the converse. In our first theorem, we show that H\"older geometry in the domains indeed gives weak-$\Lip(1/2)$ control on the driving term, at least in the case that the Loewner chain is generated by a simple curve.

\begin{theorem} \label{slit}
Let $K_t$ be a geometric Loewner chain, for $0 \leq t \leq T$, that is generated by a simple curve. If $\Hy \backslash K_T$ is a $(\beta,C_0)$-H\"older domain, then this chain has a driving term, $\lambda$, for which 
$$|\lambda_s - \lambda_t| \leq C\sqrt{|s-t| \log(1/|s-t|)}.$$
if $|s-t| \leq 1/2$. Here, $C > 0$ depends only on $\beta$, $C_0$, and $T$.
\end{theorem}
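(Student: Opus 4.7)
The plan is first to reduce the theorem to a diameter estimate on the transition hull $K_{s,t}=g_s(K_t\setminus K_s)$, and then to bound this diameter using the H\"older geometry of $\Omega_T$. The reduction rests on a standard Loewner estimate: for any Loewner chain $\tilde K_u$ on $[0,U]$ driven by a continuous $\tilde\lambda_u$, one has $|\tilde\lambda_U-\tilde\lambda_0|\le C_1\sup\{|z-\tilde\lambda_0|\colon z\in\tilde K_U\}$, a consequence of the Schwarz lemma applied to a shifted hydrodynamic transition map. Applied to the transition chain $\tilde K_u := K_{s,s+u}-\lambda_s$ with driving $\lambda_{s+u}-\lambda_s$, this yields $|\lambda_t-\lambda_s|\le C_1\diam(K_{s,t})$, so the problem reduces to showing
\[
\diam(K_{s,t})\le C_2\sqrt{(t-s)\log(1/(t-s))}.
\]

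Since $\hcap(K_{s,t})=2(t-s)$, the standard comparison between half-plane capacity and vertical extent immediately gives $\max\{\Im(z)\colon z\in K_{s,t}\}\le 2\sqrt{t-s}$, so the vertical part of the diameter is already under control. The real issue is the horizontal extent $\max\{|\Re(z)-\lambda_s|\colon z\in K_{s,t}\}$, and this is where the H\"older hypothesis enters.

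For the horizontal bound, I would exploit the H\"older parameterization of $\partial\Omega_T$ provided by $f=g_T^{-1}$. Since $\gamma$ is a simple curve and $\Omega_T$ is H\"older (hence locally connected), $f$ extends continuously to $\R$; each $\gamma(u)$ for $u\in(0,T)$ has two prime-end preimages $x^-(u)<x^+(u)$, monotone in $u$. Introducing the hydrodynamic conformal map $\psi:=g_s\circ f\colon\Hy\to\Hy\setminus K_{s,T}$, one obtains the factorization
\[
K_{s,t}=\psi\bigl([x^-(t),x^-(s)]\cup[x^+(s),x^+(t)]\bigr).
\]
The H\"older estimate on $f$ controls $\diam(\gamma[s,t])$ in terms of the interval lengths $|x^\pm(t)-x^\pm(s)|$, while the hydrodynamic normalization of $\psi$, together with $\hcap(K_{s,T})=2(T-s)$, controls how $\psi$ distorts these intervals near their common endpoint $f^{-1}(\gamma(s))$. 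To produce the sharp $\sqrt{\log(1/(t-s))}$ correction, I plan to apply a Wolff-type length-distortion estimate to the family of circular cross-cuts $\{|z-\gamma(s)|=r\}\cap\Omega_s$ indexed by $r$ in a suitable dyadic range, then combine Cauchy--Schwarz with the H\"older bound on $f$ to evaluate the resulting logarithmic integral in $r$.

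The main obstacle is executing this last step with the correct $\sqrt{\log}$ rate and with constants depending only on $\beta$, $C_0$, and $T$. A key subtlety is that $\Omega_s$ itself is not assumed H\"older (only the smaller domain $\Omega_T\subset\Omega_s$ is), so all geometric estimates must be routed through $\psi$, or equivalently through $f$, rather than through $g_s$ directly.
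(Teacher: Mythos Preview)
Your reduction to bounding $\diam(K_{s,t})$ is correct and matches the paper, as is the vertical bound via $\hcap$. The gap lies in the horizontal estimate.

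You propose to control the horizontal extent via Wolff's lemma applied to circular cross-cuts around $\gamma(s)$ in $\Omega_s$, combined with the H\"older bound on $f$ and Cauchy--Schwarz. But you have not specified how the area term in Wolff's inequality is to be bounded, how the H\"older bound on $f$ actually enters the Cauchy--Schwarz step, or over what range of radii you integrate to produce the logarithm. More seriously, you flag that $\Omega_s$ is not known to be H\"older and propose to ``route through $\psi$''; yet $\psi=g_s\circ f$ still passes through $g_s$, whose local distortion near $\gamma(s)$ is precisely the unknown, and hydrodynamic normalization at infinity gives no local control there. As written, there is no mechanism that ties the H\"older hypothesis on $\Omega_T$ to the horizontal extent of $K_{s,t}$ with the correct $\sqrt{\log}$ rate.

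The paper resolves your ``key subtlety'' head-on rather than routing around it: it proves that $\Hy\setminus K_{s,t}=g_s(\Omega_t)$ \emph{is} a H\"older domain, with quantitative constants. This takes two steps. First, since $\gamma$ is simple, $\Omega_T$ is dense in $\Omega_t$, and a density lemma (resting on the Smith--Stegenga strengthening of the Becker--Pommerenke hyperbolic-metric characterization of H\"older domains) transfers the H\"older property from $\Omega_T$ up to $\Omega_t$. Second, a sub-invariance lemma shows that $g_s$ carries the H\"older domain $\Omega_t\subset\Omega_s$ to a $(\beta/2,C')$-H\"older domain $\Hy\setminus K_{s,t}$. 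With that in hand, the diameter bound becomes a self-contained statement about a single simple slit whose complement is H\"older: cover the horizontal extent of $K_{s,t}$ by a chain of top Whitney squares $Q_0,\dots,Q_n$, use the H\"older property (again via the Smith--Stegenga lemma) to get $n\lesssim 1+\log^+(1/\diam(K_{s,t}))$, and apply Cauchy--Schwarz to $\sum_i\diam(Q_i)$ together with $\sum_i\diam(Q_i)^2\lesssim\hcap(K_{s,t})$. That is where the $\sqrt{\log}$ appears.

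So the missing idea in your plan is precisely this pair of inheritance lemmas (density and sub-invariance) that lets one assume $\Hy\setminus K_{s,t}$ is itself H\"older. Once you have that, your Cauchy--Schwarz intuition is on target, but the clean vehicle for it is Whitney squares and the hyperbolic-metric growth condition rather than Wolff's lemma.
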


In some sense, this result can be viewed as a deterministic ``converse" to what we know about $\SLE_{\kappa}$ curves when $\kappa < 4$. For such values of $\kappa$, the driving term is a scaled Brownian motion and the corresponding hulls are generated by a simple curve whose complement is a H\"older domain. 

We must ask, then: what can be said in the absence of the ``simple curve" hypothesis? Here, statements are not as neat as in Theorem \ref{slit}, but we still can say something interesting. The conditions we will impose on the Loewner chains are, however, fairly technical, and so we put it off until the end of Section \ref{prelim}. They arise quite naturally, though, as a generalization of the following geometric criteria that guarantee the existence of a driving term in $\Lip(1/2)$.

\begin{theorem} \label{Johnprop}
Let $K_t$ be a geometric Loewner chain, and let $\Omega_t = \Hy \backslash K_t$ be the complementary domains. Suppose that, for each $0 \leq s < t \leq T$, there is a point $z_0 \in K_t \backslash K_s$ for which
\begin{enumerate}
\item[\textup{(i)}] $\diam_{\Omega_s}(K_t \backslash K_s) \leq C_0 \dist(z_0,\partial \Omega_s),$ and
\item[\textup{(ii)}] there is an $L$-John curve in $\Omega_s$ with tip $z_0$ and base-point $\infty$.
\end{enumerate}
Then this chain has a continuous driving term, $\lambda$, for which $\| \lambda \|_{1/2} \leq C$, where $C > 0$ depends only on $L$ and $C_0$.
\end{theorem}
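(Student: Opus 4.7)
The plan is to reduce the theorem to a single diameter bound on the transition hulls $K_{s,t} := g_s(K_t \backslash K_s)$. Each $K_{s,t}$ is a half-plane hull with $\hcap(K_{s,t}) = 2(t-s)$, attached in closure to $\R$ at what will become $\lambda_s$, and a standard estimate for hydrodynamically normalized maps gives $|\lambda_t - \lambda_s| \le C \cdot \diam(K_{s,t})$ once the driving term is known to exist. The entire argument therefore reduces to showing
\[
\diam(K_{s,t}) \le C\sqrt{t-s},
\]
with $C$ depending only on $L$ and $C_0$. This inequality simultaneously verifies condition (iii) of Theorem \ref{Pom}, which produces the continuous driving term.

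For the diameter bound, I would set $r := \dist(z_0, \partial \Omega_s)$, $w_0 := g_s(z_0) \in K_{s,t}$, and $h := \Im(w_0)$. Koebe distortion applied to $g_s^{-1} \colon \Hy \to \Omega_s$ yields $|g_s'(z_0)| \cdot r \asymp h$. The argument then splits into two estimates: (a) $\diam(K_{s,t}) \le C_1 h$, which uses conditions (i) and (ii); and (b) $h^2 \le C_2 \hcap(K_{s,t})$, a classical capacity bound. For (b), since $w_0 \in K_{s,t}$ lies at height $h$ and the hull is connected in closure to $\R$, the standard inequality $\hcap(K) \gtrsim \bigl(\sup_{z \in K} \Im(z)\bigr)^2$ for connected $\Hy$-hulls gives the desired bound.

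Estimate (a) is the heart of the argument. Condition (i) bounds the inner Euclidean diameter of $K_t \backslash K_s$ in $\Omega_s$ by $C_0 r$; the task is to transport this bound through $g_s$ to a Euclidean bound in $\Hy$. The $L$-John hypothesis (ii) is essential here: the John curve from $\infty$ to $z_0$ defines a carrot-shaped subdomain of $\Omega_s$ of scale $\sim r$ near $z_0$, along which $g_s$ has uniformly bounded distortion (by Koebe applied at each point of the curve, together with the John condition on distance-to-boundary). Covering inner geodesics from $z_0$ to other points of $K_t \backslash K_s$ by chains of Koebe disks anchored near this carrot yields uniform distortion bounds on the relevant inner-$C_0 r$-ball, converting (i) into the Euclidean estimate $\diam(K_{s,t}) \le C_1 h$. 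Combining with (b) then gives $\diam(K_{s,t})^2 \le 2 C_1^2 C_2 (t-s)$, as required.

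The principal obstacle is estimate (a): a single Koebe-disk estimate controls $g_s$ only on a ball of radius $\sim r$ around $z_0$, whereas inner geodesics permitted by condition (i) may have length up to $C_0 r$ and approach $\partial \Omega_s$ closely in between, where the Koebe estimate at $z_0$ no longer applies. The $L$-John carrot resolves this by providing quantitative hyperbolic control on $\Omega_s$ at scale $r$ near $z_0$, and making this precise — with explicit dependence of the constants on $L$ and $C_0$ — is the technical core of the argument.
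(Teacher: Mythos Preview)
Your overall reduction is exactly right and matches the paper: bound $\diam(K_{s,t})$ by a constant times $\sqrt{t-s}$, then invoke Pommerenke and the standard $|\lambda_s-\lambda_t|\le 4\diam(K_{s,t})$. Step (b) is also the same. The gap is entirely in your proposed mechanism for step (a).

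The difficulty you flag is real and your proposed fix does not resolve it. The connecting arcs supplied by condition (i) live in $B(z_0,C_0 r)\cap\Omega_s$, but nothing forces them anywhere near the John carrot, which runs from $z_0$ out to $\infty$ in a completely different direction; ``anchoring Koebe disks near the carrot'' is therefore not available. Along such an arc $\gamma$ one has $|g_s'(\zeta)|\asymp \Im(g_s(\zeta))/\delta_{\Omega_s}(\zeta)$, and since $\gamma$ may approach $\partial\Omega_s$ while $\Im(g_s(\zeta))$ need not be small there, a Koebe-chain argument gives no uniform bound on $\length(g_s(\gamma))$ or on the number of Whitney cubes $\gamma$ meets. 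In short, the Euclidean distortion of $g_s$ on the inner $C_0 r$-ball is genuinely uncontrolled, and no amount of covering by Koebe disks repairs this without an additional idea.

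The paper sidesteps Euclidean distortion entirely by working with conformally invariant quantities. For each $w\in K_{s,t}$ it takes the vertical line $\ell'\subset\Hy$ through $w$, pulls it back to a hyperbolic geodesic $\ell=g_s^{-1}(\ell')\subset\Omega_s$ with one end at $\infty$, and observes that $\ell$ meets $K_t\backslash K_s$, so $\dist_{\Omega_s}(z_0,\ell)\le C_0\,\delta_{\Omega_s}(z_0)$ by (i). The key Lemma~\ref{CarrotRay}, proved via a modulus-of-path-families argument together with the John curve at $z_0$, then gives $\rho_{\Omega_s}(z_0,\ell)\lesssim 1$. Since $\rho_{\Omega_s}(z_0,\ell)=\rho_{\Hy}(w_0,\ell')$ and Lemma~\ref{Hy} converts the latter into a bound on $|\Re(w)-\Re(w_0)|/\Im(w_0)$, one obtains $|\Re(w)-\Re(w_0)|\lesssim\Im(w_0)$, hence $\diam(K_{s,t})\lesssim\sup_{w\in K_{s,t}}\Im(w)$. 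The point is that the John curve is used not to control distortion of $g_s$ but to produce, via modulus, a lower bound on the path family joining a ball near $z_0$ to $\ell$; this is where Lemmas~\ref{WhitneyballRay} and~\ref{CarrotRay} (and the topological separation argument based on Janiszewski's lemma) enter, and they have no analogue in your sketch.
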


This result extends the criteria given in Theorem \ref{LR1/2} above. Indeed, any point in the $k$-quasi-disk $D_s$ can be connected to $\infty$ by an $L$-John curve in $D_s$, where $L$ depends only on $k$.

In the next section, we will introduce additional terminology and establish some basic estimates that will be useful later on. There we will also state Theorem \ref{nonslit}, which gives criteria to ensure that a Loewner chain be generated by a weak-$\Lip(1/2)$ driving term. Section \ref{secslit} is devoted to the proof of Theorem \ref{slit} and so deals primarily with half-planes slit by a simple curve. In Section \ref{secnonslit} we will prove Theorems \ref{Johnprop} and \ref{nonslit} after discussing their relationship to Theorem \ref{slit}.

\subsection*{Acknowledgements}
The author thanks Mario Bonk for his detailed explanations of the background theory for the Loewner equation and of related topics in conformal mapping. He is grateful to Steffen Rohde for several discussions and for sharing his insight into $\SLE$. He also thanks Michel Zinsmeister, Huy Tran, Joan Lind, and Marie Snipes for helpful conversations during various stages of this project.

Part of the research for this project was done during the ``Interactions between Analysis and Geometry" program at IPAM and also during the ``Random Walks and Asymptotic Geometry of Groups" program at IHP. He thanks those institutes for their hospitality and the participants for a stimulating atmosphere. The author also acknowledges partial support from NSF grants DMS-1162471 and DMS-1344959.

\section{Preliminaries} \label{prelim}

In addition to the Euclidean structure that $\Hy$ possesses as a subset of $\C$, it can also be endowed with the Riemannian metric whose length element is 
$$ds = \frac{\sqrt{dx^2+dy^2}}{y} = \frac{|dz|}{y},$$
where $x = \Re(z)$ and $y= \Im(z) >0$. This makes $\Hy$ a model for 2-dimensional hyperbolic space. We will frequently look to the interplay between the hyperbolic and Euclidean structures on $\Hy$.

Let $\Omega \subsetneq \C$ be a simply connected domain; recall that a domain is open and connected. By the Riemann mapping theorem, there is a conformal map $f \colon \Hy \rightarrow \Omega$. As such, the hyperbolic metric on $\Hy$ can be transferred to a hyperbolic metric on $\Omega$, and we will denote it by $\rho_{\Omega}$. More specifically, we let $\rho_{\Omega}(z) |dz|$ be the hyperbolic length element at $z \in \Omega$, where 
$$\rho_{\Omega}(z) = \frac{1}{|f'(w)| \Im(w)}, \hspace{0.3cm} f(w)=z,$$
so that the metric is defined internally by
$$\rho_{\Omega}(z_0,z_1) = \inf_{\gamma} \int_{\gamma} \rho_{\Omega}(z) |dz|,
\hspace{0.3cm} z_0,z_1 \in \Omega$$
with the infimum taken over all rectifiable paths $\gamma$ in $\Omega$ that join $z_0$ and $z_1$. Let us point out a few features of this metric, which follow immediately from the corresponding properties of the hyperbolic metric on $\Hy$. 

First, any two points in $\Omega$ can be joined by a unique geodesic segment, and the metric is geodesically complete. Thus, $\Omega$ has many hyperbolic lines (isometric images of $\R$) and hyperbolic rays (isometric images of $[0,\infty)$). Second, if $\ell \subset \Omega$ is a hyperbolic line and $z_0 \in \Omega$ is a point, then there is a unique point $p \in \ell$ for which
$$\rho_{\Omega}(z_0,p) = \min \{\rho_{\Omega}(z_0,z) : z \in \ell \}.$$ 
This nearest-point projection, $p$, is characterized by the property that the hyperbolic line joining $z$ and $p$ intersects $\ell$ orthogonally.

We use standard notation regarding the Euclidean geometry of $\C$. Let $B(z_0,r)$ denote the open Euclidean ball of radius $r>0$ in $\C$, centered at $z_0 \in \C$. We will use $\cl{B}(z_0,r)$ to denote the closed Euclidean ball. For $0 \leq r < R$, we let 
$$A(z_0,r,R) := B(z_0, R) \backslash \cl{B}(z_0,r)$$
be the open annulus with inner radius $r$ and outer radius $R$. If $A,B \subset \C$, then 
$$\diam(A) := \sup \{|a-a'| : a,a' \in A\}$$
and
$$\dist(A,B) := \inf \{ |a-b| : a \in A \text{ and } b \in B \}.$$
For a domain $\Omega \subset \C$ and non-empty subsets $A,B \subset \Omega$, let
$$\dist_{\Omega}(A,B) := \inf \left\{ \diam(E) : E\subset \Omega \text{ is connected, }A \cap E \neq \emptyset, \text{ and } B \cap E \neq \emptyset \right\}$$
be the \ti{internal} distance between $A$ and $B$. It is clear that one can require $E$ to be an arc in the infimum. In the case that $A =\{a\}$ and $B=\{b\}$ are singletons, we abuse notation and simply write $\dist_{\Omega}(a,b)$. Similarly, the internal diameter of $A \subset \Omega$ is
$$\diam_{\Omega}(A) := \sup \{ \dist_{\Omega}(a,a') : a,a' \in A \}.$$

Lastly, it will be convenient for us to suppress multiplicative constants in much of our analysis. For quantities $A$ and $B$ that depend on certain choices, the notation $A \lesssim B$ means that there is a constant $C>0$ such that $A \leq C \cdot B$ for all possible choices determining $A$ and $B$. Similarly, the notation $A \approx B$ means that $A \lesssim B$ and $B \lesssim A$. In some instances, the implicit constant $C$ will be an absolute constant, though we will indicate this explicitly for clarity. We should remark that some of the bounds we use can be made sharp without much difficulty, but the extra work does not really improve our results. In fact, the methods we use are probably unable to obtain sharp statements (for example, in Theorem \ref{slit}) due to their inherent ``coarseness." 

\subsection{The quasi-hyperbolic metric}

Let $\Omega \subsetneq \C$ be a simply connected domain. To understand $\rho_{\Omega}$ in terms of the Euclidean geometry of $\Omega$, or of $\partial \Omega$, a useful tool is the comparable quasi-hyperbolic metric. For $z \in \Omega$, let
$$\delta_{\Omega}(z) = \dist(z,\partial \Omega)$$
denote the Euclidean distance from $z$ to $\partial \Omega$. The Koebe distortion theorem guarantees that
\begin{equation} \label{qhcomp}
\frac{1}{2}\rho_{\Omega}(z) \leq \frac{1}{\delta_{\Omega}(z)} \leq 2 \rho_{\Omega}(z),
\end{equation}
cf. \cite[Corollary 1.4]{Pom92}, along with the fact that $\rho_{\D}(z) = 2/(1-|z|^2)$. The quasi-hyperbolic distance is then defined by
$$k_{\Omega}(z_0,z_1) = \inf_{\gamma} \int_{\gamma} \frac{|dz|}{\delta_{\Omega}(z)},
\hspace{0.5cm} z_0,z_1 \in \Omega$$
where, once again, the infimum is taken over all rectifiable paths $\gamma$ in $\Omega$ that join $z_0$ to $z_1$. The estimate in \eqref{qhcomp} immediately implies that
$$\frac{1}{2} \rho_{\Omega}(z_0,z_1) \leq k_{\Omega}(z_0,z_1) \leq 2 \rho_{\Omega}(z_0,z_1)$$
for all $z_0,z_1 \in \Omega$.

To analyze $k_{\Omega}$, it is often helpful to approximate it combinatorially, using Whitney decompositions. We say that a collection $\mathcal{W}$ of closed dyadic Euclidean squares (i.e., side lengths are powers of 2) is a \ti{Whitney decomposition} of $\Omega$ if $\Omega= \bigcup_{Q \in \mathcal{W}} Q$, the squares have non-empty, pairwise disjoint interiors, and 
\begin{equation} \label{Wsquare}
\tfrac{1}{2}\diam(Q) \leq \dist(Q, \partial \Omega) \leq 4\diam(Q)
\end{equation}
for all $Q \in \mathcal{W}$. Note that $\diam(Q) = \sqrt{2}\cdot 2^j$ where $2^j$ is the side length of $Q$. It is not difficult to construct such a decomposition for a given domain $\Omega$; see, for example \cite[Section 6.1]{Ste70}. The precise construction of $\mathcal{W}$ is not, however, important in our analysis. Instead, the key property is that $\diam(Q)$ is comparable to $\dist(Q,\partial \Omega)$ for each $Q \in \mathcal{W}$. For convenience, though, when $\Omega = \Hy$, we will always use the decomposition with squares of the form
$$Q_{j,k} = \{z: k2^j \leq \Re(z) \leq (k+1)2^j \text{ and } 2^j \leq \Im(z) \leq 2^{j+1} \},$$
where $k,j \in \Z$. Let us also remark that, in general, the squares in a Whitney decomposition have controlled overlap, in the sense that no point is contained in more than four squares. This continues to hold if the squares are replaced by slightly larger dilates of themselves.

If $Q \in \mathcal{W}$ is a Whitney square in a decomposition of $\Omega$, then \eqref{Wsquare} easily implies that the diameter of $Q$, with respect to the quasi-hyperbolic metric $k_{\Omega}$, satisfies
$$1/4 \leq \diam_{k_{\Omega}}(Q) \leq 2.$$
In light of this, it is not surprising that one can approximate the quasi-hyperbolic metric (and, therefore, also the hyperbolic metric) in $\Omega$ using chains of Whitney squares, as the following lemma indicates.

\begin{lemma} \label{qhmetric}
Let $\mathcal{W}$ be a Whitney decomposition of $\Omega$, and let $\ell$ be a quasi-hyperbolic geodesic segment in $\Omega$ with endpoints $z_0$ and $z_1$. Define $N(\ell)$ to be the number of squares in $\mathcal{W}$ that $\ell$ intersects. Then 
$$N(\ell) \lesssim \max \{k_{\Omega}(z_0,z_1), 1\} \leq 2N(\ell),$$
where the implicit constant is absolute.
\end{lemma}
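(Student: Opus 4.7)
The plan is to prove the two inequalities separately. For the upper bound $\max\{k_{\Omega}(z_0, z_1), 1\} \leq 2N(\ell)$, I would parametrize $\ell$ by qh-arc length as $\ell : [0,L] \to \Omega$ with $L = k_\Omega(z_0,z_1)$ and invoke the diameter estimate $\diam_{k_\Omega}(Q) \leq 2$ recorded just before the lemma. The key observation, which handles the possibility that $\ell \cap Q$ consists of several connected components, is that if $\tau_1 < \tau_2$ denote the first and last times at which $\ell$ visits the square $Q$, then because $\ell$ is a qh-geodesic,
\begin{equation*}
\length_{k_\Omega}(\ell \cap Q) \leq \tau_2 - \tau_1 = k_\Omega(\ell(\tau_1), \ell(\tau_2)) \leq \diam_{k_\Omega}(Q) \leq 2.
\end{equation*}
Summing over the $N(\ell)$ squares that $\ell$ meets, and noting that the boundaries of the Whitney squares carry no $1$-dimensional measure along $\ell$, would yield $k_\Omega(z_0, z_1) = \length_{k_\Omega}(\ell) \leq 2 N(\ell)$. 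Since $z_0$ lies in at least one Whitney square, $N(\ell) \geq 1$, and combining gives $\max\{k_\Omega(z_0, z_1), 1\} \leq 2 N(\ell)$.

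For the lower bound $N(\ell) \lesssim \max\{k_\Omega(z_0, z_1), 1\}$, I would break $\ell$ into $M \lesssim \max\{L, 1\}$ consecutive sub-arcs $\ell_1, \ldots, \ell_M$, each of qh-length at most $1$, and show that each $\ell_i$ meets at most an absolute constant number of Whitney squares; this would give $N(\ell) \lesssim M \lesssim \max\{L, 1\}$. For each $\ell_i$ with initial point $p$, since $|\ell'(t)| = \delta_\Omega(\ell(t))$ in the qh-arc-length parametrization and $\delta_\Omega$ is $1$-Lipschitz for the Euclidean distance, a Gronwall-type bound yields $\delta_\Omega(\ell(t)) \in [e^{-1}\delta_\Omega(p),\, e\,\delta_\Omega(p)]$ along $\ell_i$, and in particular $\ell_i \subset \cl{B}(p,\, e\,\delta_\Omega(p))$. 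By \eqref{Wsquare}, any Whitney square $Q$ meeting $\ell_i$ then satisfies $\diam(Q) \approx \delta_\Omega(p)$ with absolute implicit constants and lies in $\cl{B}(p,\, C\,\delta_\Omega(p))$ for an absolute $C > 0$; since the squares have pairwise disjoint interiors, a simple area count bounds the number of such $Q$ by an absolute constant.

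The hard part will be the upper bound, where one must avoid double-counting contributions from the possibly multiple components of $\ell \cap Q$; the trick of using the first- and last-visit times along $\ell$ instead of summing component lengths is essential. The lower bound is by comparison routine, reducing to a Koebe-type volume count once each sub-arc is localized to a Euclidean ball of radius comparable to $\delta_\Omega(p)$.
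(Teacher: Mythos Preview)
Your argument is correct. For the upper bound you flesh out exactly what the paper means when it says the inequality ``follows immediately from $\diam_{k_\Omega}(Q)\le 2$'': your first-visit/last-visit trick along the geodesic is the clean way to handle the possibility that $\ell\cap Q$ is disconnected, and the covering inequality $\sum_Q |\{t:\ell(t)\in Q\}|\ge L$ is all you need (the aside about boundaries carrying no $1$-dimensional measure is unnecessary, since you only require an upper bound term-by-term and a lower bound on the sum).

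For the lower bound you take a genuinely different route from the paper. The paper's sketch assigns to each Whitney square $Q$ meeting $\ell$ a \emph{neighboring} square through which $\ell$ must run for Euclidean length $\gtrsim\diam(Q)$, hence qh-length $\gtrsim 1$; bounded overlap of neighbors then converts this into $N(\ell)\lesssim k_\Omega(z_0,z_1)+1$. Your approach instead chops $\ell$ into $\lceil L\rceil$ sub-arcs of qh-length at most $1$, localizes each via the Gronwall-type estimate $e^{-1}\le \delta_\Omega(\ell(t))/\delta_\Omega(p)\le e$, and then does a straight area count. Both arguments are short; yours is perhaps more portable (it never uses the combinatorics of which squares are adjacent, only the size comparison \eqref{Wsquare} and disjointness of interiors), while the paper's version makes more direct use of the Whitney adjacency structure. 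Either one proves the lemma with absolute constants.
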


Note that the upper bound follows immediately from $\diam_{k_{\Omega}}(Q) \leq 2$ for each $Q \in \mathcal{W}$. The lower bound is not difficult to establish using the fact that each Whitney square $Q$ that intersects $\ell$ has a neighbor through which $\ell$ runs for length comparable to $\diam(Q)$.

\subsection{Geometry of half-plane hulls}

The standard Whitney decomposition of $\Hy$ is also important for a geometric understanding of the half-plane capacity. Recall that for a half-plane hull $K \subset \Hy$, i.e., $K$ is bounded and $\Hy \backslash K$ is a simply connected domain, $\hcap(K)$ is the non-negative number for which the hydrodynamic conformal map $f \colon \Hy \rightarrow \Hy \backslash K$ has
$$f(z) = z - \frac{\hcap(K)}{z} + O \lp \frac{1}{z^2} \rp, \hspace{0.3cm} \text{as } z \rightarrow \infty.$$
The following lemma tells us that $\hcap(K)$ is comparable to the total area of Whitney squares that $K$ intersects.

\begin{lemma} \label{Warea}
Let $K$ be a half-plane hull, and let $\mathcal{W}$ be the standard Whitney decomposition of $\Hy$. Define $\Area_{\mathcal{W}}(K)$ to be the area of the union of squares $Q \in \mathcal{W}$ for which $Q \cap K \neq \emptyset$. Then $\hcap(K) \approx \Area_{\mathcal{W}}(K)$, where the implicit constants are absolute.
\end{lemma}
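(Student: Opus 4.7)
The approach is via the probabilistic formula for half-plane capacity: if $B_t$ is a planar Brownian motion started at $iy$ and $\tau = \inf\{t : B_t \in K \cup \R\}$, then
\[
\hcap(K) = \lim_{y \to \infty} y \cdot \mathbb{E}^{iy}[\Im B_\tau]
\]
(a standard identity, cf.\ \cite{Law05}). This reduces both sides of the desired comparison to a sum over Whitney squares of harmonic-measure contributions.

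For the upper bound $\hcap(K) \lesssim \Area_\mathcal{W}(K)$, I would decompose the expectation by which Whitney square contains $B_\tau$:
\[
\mathbb{E}^{iy}[\Im B_\tau] \leq \sum_{Q \in \mathcal{W},\, Q \cap K \neq \emptyset} 2 \cdot \mathrm{side}(Q) \cdot \mathbb{P}^{iy}[B_\tau \in Q],
\]
using that $\Im z \leq 2 \cdot \mathrm{side}(Q)$ for $z \in Q$. The probability is at most the harmonic measure of $Q$ from $iy$ in the unobstructed $\Hy$, which by a Poisson-kernel estimate is $\lesssim \mathrm{side}(Q)/y$ once $y$ is much larger than the position of $Q$; since $K$ is bounded, this holds uniformly in $Q$ as $y \to \infty$. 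Multiplying by $y$, summing, and taking $y \to \infty$ then gives $\hcap(K) \lesssim \sum \mathrm{side}(Q)^2 = \Area_\mathcal{W}(K)$.

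For the lower bound, my plan is to exploit the additivity of hcap under Loewner concatenation. Stratify $K$ by dyadic heights: set $K^{(n)} := K \cap \{\Im z \leq 2^n\}$, so $K^{(n)} \nearrow K$ as $n \to \infty$, and
\[
\hcap(K^{(n+1)}) - \hcap(K^{(n)}) = \hcap\bigl(g_{K^{(n)}}(K^{(n+1)} \setminus K^{(n)})\bigr).
\]
By Koebe distortion applied to $g_{K^{(n)}}$ at heights comfortably above the top of $K^{(n)}$, the scale-$n$ slice $K^{(n+1)} \setminus K^{(n)}$ is carried to a set whose hcap is comparable to the total area of the scale-$n$ Whitney squares intersected by $K$. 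Summing over $n$ then recovers $\Area_\mathcal{W}(K)$ as a lower bound.

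The hard part will be the lower bound, specifically controlling the distortion of $g_{K^{(n)}}$ on the slice $K^{(n+1)} \setminus K^{(n)}$, which sits just above the top of $K^{(n)}$ where $|g_{K^{(n)}}'|$ can blow up. A buffer argument, together with the quasi-hyperbolic distance comparison from Section \ref{prelim}, should give the uniform control needed. An alternative route is a Beurling-projection estimate giving a matching lower bound $\mathbb{P}^{iy}[B_\tau \in Q \cap K] \gtrsim \mathrm{side}(Q)/y$ directly: here the hull structure of $K$ (connectedness and reaching $\R$, which follows from simple connectedness of $\Hy \setminus K$) is essential, since it forces Brownian motion that enters the column beneath $Q$ to hit $K$ at height comparable to $\mathrm{side}(Q)$ with uniformly positive conditional probability.
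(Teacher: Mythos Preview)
The paper does not actually prove this lemma: it cites \cite{LLN09} and \cite{RW14}, noting that those references show $\hcap(K)$ is comparable, respectively, to the area of the union of disks tangent to $\R$ with centers in $K$ and to the Euclidean area of the hyperbolic $1$-neighborhood of $K$, both of which are easily seen to be comparable to $\Area_{\mathcal{W}}(K)$. Your probabilistic approach is the one underlying \cite{LLN09} and \cite[Chapter 3, Section 4]{Law05}, so you are on the same road as the literature the paper invokes. Your upper bound is fine.

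Your lower bound, however, has real gaps in both proposed routes. For the stratification route, $K^{(n)} = K \cap \{\Im z \leq 2^n\}$ need not be a hull: nothing prevents $\Hy \setminus K^{(n)}$ from being multiply connected (think of $K$ a curve that rises above height $2^n$ and then descends again). You would need to pass to a filled version, and even then the distortion of $g_{K^{(n)}}$ on the slice immediately above is exactly the uncontrolled regime; ``a buffer argument'' is a hope, not an argument.

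For the Beurling route, the per-square lower bound $\mathbb{P}^{iy}[B_\tau \in Q \cap K] \gtrsim \mathrm{side}(Q)/y$ is simply false in general. Take $K$ to be a filled square $[0,N]\times(0,N]$: every Whitney cube inside $K$ at small scale meets $K$, but $B_\tau \in \partial K \cup \R$ never lands in the interior of $K$, so the probability is zero for those $Q$. What rescues the estimate is that the ``top'' Whitney squares (those with no $K$-meeting square directly above them) already carry a definite fraction of $\Area_{\mathcal{W}}(K)$, and for \emph{those} squares a Beurling-type lower bound does hold because Brownian motion can reach them from above unobstructed. This reduction to top squares is the missing idea; it is essentially what the tangent-disk formulation in \cite{LLN09} encodes, since the union of tangent disks automatically discards the shielded small-scale contributions.
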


There are multiple proofs of this lemma; see, for example \cite{LLN09} and \cite{RW14}. In these references, it is shown, respectively, that $\hcap(K)$ is comparable to the area of the union of disks tangent to $\R$ with centers in $K$ and to the Euclidean area of the hyperbolic $1$-neighborhood of $K$ in $\Hy$. These two area quantities are, of course, comparable to each other and also comparable to the Whitney area $\Area_{\mathcal{W}}(K)$. See also \cite[Chapter 3, Section 4]{Law05} for an interpretation of half-plane capacity in terms of hitting probabilities of Brownian motion started near infinity.

Analysis of the hydrodynamic map $f \colon \Hy \rightarrow \Hy \backslash K$ is made easier by the integral representation of $f$. Namely, there is a finite, compactly supported measure $\mu_K$ on $\R$ for which
$$f(z) = z - \int_{\R} \frac{d\mu_K(u)}{z-u}, \hspace{0.5cm} z \in \Hy.$$
See, for example, \cite[Proposition 2.1]{Bau05}. If $f$ extends continuously to $\R$, then $d\mu_K(u) = \tfrac{1}{\pi}\Im(f(u))du$. In any case, a geometric series expansion of the integral representation shows immediately that $\mu_K(\R) = \hcap(K)$. 

The support of $\mu_K$ is defined to be
$$\supp(\mu_K) = \{ x \in \R : \mu_K((x-r,x+r)) > 0 \text{ for all } r > 0 \},$$
which is a compact set in $\R$. It is an important fact that if $\supp(\mu_K) \subset (x-r,x+r)$, then $f$ can be extended, via Schwarz reflection, to a \ti{univalent} map on $\C \backslash \cl{B}(x,r)$. The smallest possible value for $r$ here is, of course, $\diam(\supp(\mu_K))/2$. Similarly, if $K \subset B(x,r)$ for some point $x \in \R$, then $f^{-1}$ can be extended to a univalent map on $\C \backslash \cl{B}(x,r)$. The smallest value for $r$ in this situation is the \ti{radius of $K$}:
$$\rad(K) = \inf \{ r >0 : \text{there is } x \in \R \text{ with } K \subset B(x,r) \}.$$
Obviously, we have $\rad(K) \leq \diam(K) \leq 2\rad(K)$, but it will be more convenient to use the radius in the following distortion estimates.

\begin{lemma} \label{supp}
Let $K$ be a half-plane hull and let $f \colon \Hy \rightarrow \Hy \backslash K$ be its hydrodynamic conformal map.
\begin{enumerate}
\item[\textup{(i)}] If $\supp(\mu_K) \subset (x-r,x+r)$ then $\Hy \backslash B(x,2r) \subset f(\Hy \backslash B(x,r))$. In particular, $K \subset B(x,2r)$, so that $\rad(K) \leq \diam(\supp(\mu_K))$.
\item[\textup{(ii)}] If $K \subset B(x,r)$, for some $x \in \R$, then $\Hy \backslash B(x,2r) \subset f^{-1}(\Hy \backslash B(x,r))$. In particular, $\supp(\mu_K) \subset (x-2r,x+2r)$, so that $\diam(\supp(\mu_K)) \leq 4 \rad(K)$.
\item[\textup{(iii)}] If $w \in \supp(\mu_K)$ and $z \in K$, then $|w-z| \leq 4\rad(K)$
\item[\textup{(iv)}] $\hcap(K) \leq (\diam(\supp(\mu_K))/2)^2 \leq 4\rad(K)^2$.
\item[\textup{(v)}] If $z \in \Hy$, then $|f(z)-z| \leq 3\rad(K)$.
\end{enumerate}
\end{lemma}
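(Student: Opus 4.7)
The common thread through all five parts is that the hydrodynamic map $f$ (and its inverse $g$) extends by Schwarz reflection to a univalent map outside a real interval (for $f$, determined by $\supp \mu_K$) or outside a real disk (for $g$, determined by a disk containing $K$), and in both cases the expansion $\zeta + O(1/\zeta)$ at infinity is preserved. The one analytic input I would isolate at the outset is an exterior coverage lemma: if $F$ is univalent on $\{|\zeta|>1\}$ with $F(\zeta) = \zeta + O(1/\zeta)$ (no constant term), then $\C \setminus F(\{|\zeta|>1\}) \subset \cl{B(0,2)}$. This follows from Bieberbach's inequality: for any omitted $p$, the function $H(w) := 1/(F(1/w) - p)$ extends to a univalent map on $\D$ with expansion $w + pw^2 + \dots$, so $|p| = |H''(0)/2| \leq 2$.

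Parts (i) and (ii) then follow from the coverage lemma by affine rescaling. For (i), take $F(\zeta) = (f(r\zeta + x) - x)/r$, which is univalent on $|\zeta|>1$ because $\supp \mu_K \subset (x-r, x+r)$ lets $f$ extend univalently to $\C \setminus [x-r, x+r] \supset \C \setminus \cl{B(x, r)}$; a direct expansion check shows the constant term vanishes. The coverage lemma gives $f(\C \setminus \cl{B(x, r)}) \supset \C \setminus \cl{B(x, 2r)}$. Intersecting with $\Hy$ (preimages of $\Hy$-points stay in $\Hy$ by the Schwarz-reflection symmetry $f(\bar z) = \overline{f(z)}$) yields $\Hy \setminus B(x, 2r) \subset f(\Hy \setminus B(x, r))$, and since $f(\Hy) = \Hy \setminus K$ this forces $K \subset B(x, 2r)$, giving $\rad(K) \leq \diam(\supp \mu_K)$. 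Part (ii) is the symmetric statement: the map $\tilde g(\zeta) = (g(r\zeta + x) - x)/r$ satisfies the hypotheses of the coverage lemma, yielding the desired inclusion for $g = f^{-1}$; and because $f$ then extends real-analytically across $\R \setminus [x - 2r, x + 2r]$, one reads off $\supp(\mu_K) \subset [x - 2r, x + 2r]$.

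Part (iii) is immediate from (ii): writing $K \subset B(x, \rad K)$ and applying (ii) puts $\supp \mu_K \subset (x - 2\rad K, x + 2\rad K)$, so $|w - z| \leq |w-x| + |z-x| \leq 3\rad K$. For (iv), set $\rho' = \diam(\supp \mu_K)/2$, center at $x = 0$, and compose $f$ with the Joukowski map $\psi(\zeta) = (\rho'/2)(\zeta + 1/\zeta)$, which sends $\{|\zeta|>1\}$ conformally onto $\C \setminus [-\rho', \rho']$. A short computation gives
\[
\tilde G(\zeta) := \frac{2}{\rho'}\, f(\psi(\zeta)) = \zeta + \frac{1 - 4\hcap(K)/\rho'^2}{\zeta} + O(1/\zeta^2),
\]
univalent on $|\zeta|>1$. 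The area theorem forces $|1 - 4\hcap(K)/\rho'^2| \leq 1$, hence $\hcap(K) \leq \rho'^2/2 \leq \rho'^2$; the second inequality in (iv) then uses (ii). Finally, for (v), set $\rho = \rad K$ and observe that $h(z) := f(z) - z$ is holomorphic on $\C \setminus [x - 2\rho, x + 2\rho]$ (by (ii) and Schwarz reflection) with $h(\infty) = 0$. On the segment itself, $f(u) = u$ if $u \notin \supp \mu_K$, while if $u \in \supp \mu_K$ the boundary value $f(u^+)$ lies in $\cl K \subset \cl{B(x, \rho)}$, giving $|h(u^+)| \leq |f(u^+) - x| + |x - u| \leq \rho + 2\rho = 3\rho$. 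The maximum principle bounds $|h|$ by $3\rho$ on the whole exterior, and $\Hy$ is contained in that exterior.

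The main obstacle, in the sense of the only place where real complex-analytic content enters, is the coverage lemma (via Bieberbach) together with the parallel area-theorem estimate in (iv); everything else is bookkeeping around Schwarz reflection and the triangle inequality. A secondary point to handle carefully is the boundary behavior of $f$ on $\supp \mu_K$ used in (v), which one justifies either by invoking the continuous extension implicit in the density formula $d\mu_K = \Im f \, du / \pi$ or by taking interior limits through a Schwarz-reflected exhaustion of $\C \setminus [x - 2\rho, x+2\rho]$.
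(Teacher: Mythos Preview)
Your arguments for parts (i)--(iv) are correct and close to the paper's: the same rescalings are used for (i) and (ii), and (iii) is immediate from (ii) in both. For (iv) the paper applies the area theorem directly to $\zeta \mapsto (f(r\zeta+x)-x)/r$ on $\{|\zeta|>1\}$ with $r = \diam(\supp\mu_K)/2$, whereas your Joukowski composition is a legitimate variant that even yields the sharper constant $\hcap(K) \leq (\diam\supp\mu_K)^2/8$.

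Part (v), however, has a genuine error: the claim that $f(u) = u$ for $u \in [x-2\rho, x+2\rho] \setminus \supp\mu_K$ is false. On $\R \setminus \supp\mu_K$ the boundary extension of $f$ is real-valued but not the identity, since
\[
f(u) - u \;=\; -\int_{\supp\mu_K} \frac{d\mu_K(v)}{u-v} \;\neq\; 0
\]
whenever $\mu_K \neq 0$. For the vertical slit $K = (0,i]$ one has $f(u) = \sqrt{u^2-1}$ for real $u>1$, so $f(2) = \sqrt 3 \neq 2$ even though $2 \notin \supp\mu_K = [-1,1]$. Thus your boundary estimate on the segment $[x-2\rho, x+2\rho]$ is incomplete as written. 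The maximum-principle idea is salvageable: apply it on the complement of $\supp\mu_K$ (or of its convex hull) rather than on $\C \setminus [x-2\rho, x+2\rho]$, so that only boundary values at $u \in \supp\mu_K$ need estimating; there your bound $|h(u^+)| \leq |f(u^+)-x| + |x-u| \leq \rho + 2\rho = 3\rho$ via $f(u^+) \in \cl K$ is correct (modulo the continuity issue you already flag). The paper itself does not argue (v) directly but refers to \cite[Corollary 3.44]{Law05}.
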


\begin{proof}
Parts (i) and (ii) follow from the well-known theorem on omitted values for univalent functions. Namely, if $h \colon \C \backslash \cl{\D} \rightarrow \C$ is a univalent map with $h(z) = z + b_1/z + O(1/z^2)$ as $z \rightarrow \infty$, then 
$$\C \backslash \cl{B}(0,2) \subset h(\C \backslash \cl{\D}).$$
In other words, the ``omitted set" is contained in $\cl{B}(0,2)$, cf. \cite[p. 8]{Pom92}. The desired estimates are obtained by using the functions 
$$h(z) = \frac{f(rz + x) -x}{r} \hspace{0.3cm} \text{and} \hspace{0.3cm} h(z) = \frac{f^{-1}(rz + x) -x}{r},$$
respectively, for $|z| > 1$, where $f$ and $f^{-1}$ have been appropriately extended via Schwarz reflection.

Part (iii) follows immediately from part (ii). Indeed, if $K \subset B(x,r)$ then we have $K \cup \supp(\mu_K) \subset B(x,2r)$ so that $|w-z| \leq 4r$.

Part (iv) is a consequence of the area theorem: for a univalent map $h$ as before, $|b_1| \leq 1$ \cite[Theorem 2.1]{Dur83}. If $\supp(\mu_K) \subset (x-r,x+r)$, then we can apply this estimate to the function
$$h(z) = \frac{f(rz + x) - x}{r} = z - \frac{\hcap(K)/r^2}{z} + O \lp \frac{1}{z^2} \rp,$$
defined for $|z| > 1$ using a suitable Schwarz reflection. We deduce then that $\hcap(K) \leq r^2$.

Lastly, part (v) is essentially Corollary 3.44 in \cite{Law05}. There, $\rad(K)$ is defined to be the smallest radius of a disk centered at 0 that contains $K$. The proof, however, is the same: one can simply translate $K$ so that $K \subset \cl{B}(0,\rad(K))$.
\end{proof}

We will use these estimates frequently in later arguments. Overall, sharp constants are not essential, though it will be necessary to have \ti{specified} constants. For now, one should focus on the fact that $\supp(\mu_K)$ and $K$ have comparable diameters, that $\hcap(K)$ is bounded from above by a multiple of $\diam(K)^2$, and that the map $f$ does not move points by more than distance $3\diam(K)$.

Later, we will need some more precise distortion estimates for the hydrodynamic map $f \colon \Hy \rightarrow \Hy \backslash K$. First, let us remark on the behavior of points that are far from $K$. If $z \in \Hy$ has $\dist(z,K) \geq 7\diam(K)$, then part (iii) of the previous lemma gives
$$\dist(z,\supp(\mu_K)) \geq 7\diam(K) - 4\diam(K) = 3\diam(K) \geq 3\rad(K).$$
The integral representation of $f$ then shows that
\begin{equation} \label{fardist}
|f'(z) - 1| \leq \int_{\R} \frac{d\mu_K(u)}{|z-u|^2} \leq \frac{\mu_K(\R)}{9\rad(K)^2} = \frac{\hcap(K)}{9\rad(K)^2} \leq \frac{1}{2},
\end{equation}
where the last inequality comes from Lemma \ref{supp}(iv). Thus, $f$ is bi-Lipschitz in this region. For other points in $\Hy$, we have the following lemma.

\begin{lemma} \label{neardist}
Let $K$ be a half-plane hull and $f \colon \Hy \rightarrow \Hy \backslash K$ the associated hydrodynamic map. If $\Im(z) \leq 10\diam(K)$, then
$$\Im(z) \lesssim \diam(K) \cdot |f'(z)|, $$
with an absolute implicit constant. In particular, $\Im(z)^2 \lesssim \diam(K) \cdot \delta_{\Hy \backslash K}(f(z))$,
again with an absolute constant.
\end{lemma}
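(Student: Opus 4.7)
The plan is to reduce the two inequalities to a single distance estimate and then prove that estimate via Beurling's projection theorem.

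First I would observe that the two inequalities are equivalent up to absolute constants. Applying Schwarz--Pick to $g = f^{-1}: \Hy \backslash K \to \Hy$ and combining with the quasi-hyperbolic comparison \eqref{qhcomp} gives $|f'(z)| \asymp \delta_{\Hy \backslash K}(f(z))/\Im(z)$ with absolute implicit constants. Writing $y = \Im(z)$, $D = \diam(K)$, and $\delta = \delta_{\Hy \backslash K}(f(z))$, the inequality $y \lesssim D|f'(z)|$ is therefore equivalent to $y^2 \lesssim D\delta$, and I would focus on the latter.

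Next, with $w := f(z)$, the integral representation preceding Lemma~\ref{supp} gives $\Im(w) = y + \int_{\R} y|z-u|^{-2} \, d\mu_K(u) \geq y$. Since $\delta_{\Hy \backslash K}(w) = \min(\Im(w), \dist(w,K))$ and $y \leq 10D$, the goal reduces to establishing
$$\dist(w, K) \gtrsim y^2/D,$$
from which $\delta \gtrsim y^2/D$ follows by combining with $\Im(w) \geq y$.

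The main step is to prove the distance inequality via Beurling's projection theorem. After translating, assume $K \subset \cl{B}(0, D) \cap \Hy$; the case $\dist(w, K) \geq D$ is immediate since $y^2/D \leq 100D$, so set $d := \dist(w,K) < D$ and let $k_0 \in K$ realize this distance, giving $w \in B(k_0, 2D)$. Consider $v(w') := \Im(g(w'))$ on $\Omega := \Hy \backslash K$. Since $g$ maps $\partial \Omega = \R \cup K$ into $\R$, $v$ extends continuously by zero to $\partial \Omega$; the auxiliary function $\Im(w') - v(w')$ is a non-negative harmonic function (having non-negative boundary data), so $v(w') \leq \Im(w')$, and in particular $v \leq 3D$ on $\Omega \cap \partial B(k_0, 2D)$. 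The truncated domain $\Omega' := \Omega \cap B(k_0, 2D)$ is simply connected, and $E := K \cup (\R \cap B(k_0, 2D))$ is a connected boundary arc of $\Omega'$ of diameter at least $4D$. Since $\dist(w, E) \leq d$, Beurling's projection theorem (cf. \cite{Pom92}) yields
$$\omega_{\Omega'}(w, \, \Omega \cap \partial B(k_0, 2D)) \lesssim \sqrt{d/D}.$$
Combining with the maximum principle gives $y = v(w) \lesssim 3D \sqrt{d/D} = 3\sqrt{dD}$, which squares to $d \gtrsim y^2/D$.

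The hardest part will be verifying the topological hypotheses needed for Beurling's theorem: that $K$ is connected, that $\Omega'$ is simply connected, and that $E$ is connected. All three follow from $\Hy \backslash K$ being simply connected but need care; in particular, one uses that $\cl{K}$ must meet $\R$ for $\Hy \backslash K$ to be simply connected, which is what places $\cl{K} \cap \R$ inside $B(k_0, 2D)$ and makes $E$ connected. Once these topological points are settled, the remainder is a routine combination of the integral representation, Koebe distortion, and the maximum principle.
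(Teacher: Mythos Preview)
Your approach is correct in spirit and the Beurling estimate does yield $\dist(w,K)\gtrsim y^2/D$, but it is considerably heavier than what the paper does, and two of the ``topological hypotheses'' you list are actually false in general: a half-plane hull $K$ need \emph{not} be connected (e.g.\ two disjoint vertical slits from $\R$), and $\Omega'=\Omega\cap B(k_0,2D)$ need not be simply connected. Fortunately neither is required for Beurling's projection theorem. What you really need is that the radial projection of $\partial\Omega\cap B(k_0,2D)$ onto a radius of $B(k_0,2D)$ covers the full segment; this follows from the fact that the connected component of $\cl{K}$ containing $k_0$ must meet $\R$ (a consequence of $\hat\C\setminus\Omega$ being connected), together with the observation that $B(k_0,2D)$ contains a chord of $\R$. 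Once reframed this way, your harmonic-measure argument goes through cleanly.

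By contrast, the paper's proof avoids harmonic measure entirely and is essentially two lines of Koebe distortion. One places the auxiliary point $z_0=\Re(z)+10\,i\diam(K)$ directly above $z$, notes that $z\in B(z_0,\Im(z_0))\subset\Hy$, and applies the Koebe distortion theorem on this ball to obtain
\[
|f'(z)|\;\ge\;\frac{1-r}{(1+r)^3}\,|f'(z_0)|\;\ge\;\frac{\Im(z)}{8\,\Im(z_0)}\,|f'(z_0)|,
\]
since $1-r=\Im(z)/\Im(z_0)$. The factor $|f'(z_0)|$ is then bounded below by $1/2$ using the integral representation (the estimate \eqref{fardist}), because $z_0$ lies at height $10\diam(K)$ and is therefore far from $\supp(\mu_K)$. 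The ``in particular'' clause is immediate from the Koebe $1/4$-theorem. So the paper trades your Beurling/maximum-principle machinery for a single distortion estimate at a well-chosen reference point; your route has the merit of giving the geometric inequality $\dist(f(z),K)\gtrsim \Im(z)^2/\diam(K)$ directly, but at the cost of a longer argument and more delicate topology.
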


\begin{proof}
By the Koebe 1/4-theorem \cite[Theorem 2.3]{Dur83}, we know that 
$$|f'(z)|\Im(z) \leq 4 \delta_{\Hy \backslash K}(f(z)).$$
It therefore suffices to prove the first statement.

To this end, let $z_0 \in \Hy$ be the point with $\Re(z_0) = \Re(z)$ and $\Im(z_0) = 10\diam(K)$. Note that $z \in B(z_0,\Im(z_0)) \subset \Hy$ due to the assumption that $\Im(z) \leq 10\diam(K)$. The Koebe distortion theorem \cite[Theorem 2.5]{Dur83}, applied to this ball $B(z_0,\Im(z_0))$, gives
$$|f'(z)| \geq \frac{1-r}{(1+r)^3} |f'(z_0)| \geq \frac{1-r}{8} |f'(z_0)|,$$
where $r = |z-z_0| / \Im(z_0) = (\Im(z_0) - \Im(z)) / \Im(z_0)$. Thus, we obtain
$$|f'(z)| \geq \frac{\Im(z)}{8\Im(z_0)}|f'(z_0)| \gtrsim \frac{\Im(z)}{\diam(K)}|f'(z_0)|.$$
The desired conclusion then follows from \eqref{fardist}, which shows that $|f'(z_0)| \geq 1/2$.
\end{proof}

\subsection{John domains and H\"older domains}

In this paper, we are primarily interested in the class of H\"older domains and their corresponding geometric properties in relation to the Loewner equation. The motivation for two of our theorems, however, comes from the geometry associated to a smaller class of domains, namely, the John domains. We think it is appropriate to mention these briefly. Moreover, the curves that are used to define the John condition appear in the statements of these two theorems.

Let $\Omega \subset \C$ be a domain. For the most part, we will focus on the case where $\Omega = \Hy \backslash K$, but for now this is not important. Let $\alpha \subset \Omega$ be a simple curve in $\Omega$. For two points $x,y \in \alpha$ lying on the curve, we use $\alpha[x,y]$ to denote the closed sub-arc that joins $x$ and $y$. 

\begin{definition}
A simple curve $\alpha \subset \Omega$ is called an \ti{$L$-John curve}, with $L \geq 1$, if it has an endpoint $z \in \Omega$ such that $\diam(\alpha[x,z]) \leq L \delta_{\Omega}(x)$ for each $x \in \alpha$. When this holds, we say that the point $z$ is the \ti{tip} of $\alpha$, and we call the other endpoint, $z_0 \in \Omega \cup \{\infty\}$, the \ti{base-point} of $\alpha$.
\end{definition}

Note that we require the tip of any John curve to lie in the domain $\Omega$, but we allow the base-point to be at infinity. Here, having an endpoint at infinity simply means that the curve accumulates to infinity. It is easy to see that the John condition prohibits the curve from accumulating to any point in $\partial \Omega \backslash \{\infty\}$. For the most part, the John curves we use will lie in domains of the form $\Hy \backslash K$ and will in fact have base-points at infinity.

Geometrically, one should think of a John curve as the core of a ``twisted cone" in $\Omega$ that joins the base-point $z_0$ to the tip $z$. This cone is formed by the union of the balls 
$$B(x,\diam(\alpha[x,z])/L) \subset \Omega,$$
with $x \in \alpha$. For $z \in \Omega$ close to $\partial \Omega$, the existence of a John curve with tip $z$ means that $z$ is nicely accessible if one starts at the base-point $z_0$. The domain $\Omega$ is called an $L$-John domain if there is a point $z_0 \in \Omega \cup \{\infty\}$ such that every $z \in \Omega$ is the tip of some $L$-John curve in $\Omega$ with base-point $z_0$. This condition means that all points of $\Omega$ are nicely accessible from the common base-point $z_0$.

Given the interpretation of John curves in terms of accessibility, it is not surprising that one can bound the quasi-hyperbolic distance between the base-point $z_0$ and arbitrary points on the curve. In fact, we will later need this type of statement for the hyperbolic metric, so we now focus on the case that $\Omega$ is a simply connected domain. The following lemma shows that John curves give bounds on $\rho_{\Omega}$ that mimic the logarithmic-type bounds on $\rho_{\Hy}$.

\begin{lemma} \label{Johncone}
Let $\Omega$ be a simply connected domain, and let $\alpha$ be an $L$-John curve in $\Omega$ with base-point $z_0 \in \Omega \cup \{\infty\}$ and tip $z \in \Omega$. Then
$$\rho_{\Omega}(x,z) \leq \frac{1}{\beta} \log \lp \frac{\delta_{\Omega}(x)}{\delta_{\Omega}(z)} \rp + C$$
for each $x \in \alpha$, where $0< \beta \leq 1$ and $C >0$ depend only on $L$.
\end{lemma}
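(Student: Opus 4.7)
The plan is to pass to the comparable quasi-hyperbolic metric via $\rho_\Omega \leq 2 k_\Omega$ (see \eqref{qhcomp}) and then build a chain along $\alpha[x,z]$ whose total $k_\Omega$-cost is $O_L(1)$ per doubling of a natural scale. Let $D(y) := \diam(\alpha[y,z])$; this is continuous and nonincreasing as $y$ moves from $x$ to $z$ along $\alpha$, with $D(x) \leq L \delta_\Omega(x)$ by the John condition and $D(z) = 0$. Set $y_0 = x$ and recursively let $y_{k+1}$ be the first point on $\alpha[y_k, z]$ with $D(y_{k+1}) = D(y_k)/2$, stopping at the first $N$ with $D(y_N) \leq \delta_\Omega(z)$. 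Then $N \leq \log_2 \lp L\delta_\Omega(x)/\delta_\Omega(z) \rp + 1$, and the task is to prove $k_\Omega(y_k, y_{k+1}) \leq C(L)$ for each $k$, together with $k_\Omega(y_N, z) \leq C(L)$.

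The heart of the argument is the per-step estimate. Set $r_k = D(y_k)/(2L)$. For any $y \in \alpha[y_k, y_{k+1}]$, monotonicity of $D$ gives $D(y) \geq D(y_{k+1}) = D(y_k)/2$, and the John condition then yields $\delta_\Omega(y) \geq D(y)/L \geq r_k$; also $\diam(\alpha[y_k, y_{k+1}]) \leq D(y_k) = 2L r_k$. Thus $\alpha[y_k, y_{k+1}]$ is a compact connected subset of $\{w : \delta_\Omega(w) \geq r_k\} \subset \Omega$ of Euclidean diameter at most $2Lr_k$. Cover it by open balls $B(w_i, r_k/2)$ with $w_i$ on the arc (using compactness); since each such ball lies in $B(y_k, 3Lr_k)$ and has area comparable to $r_k^2$, an area count bounds the number of balls by $O(L^2)$. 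Because the union is open and covers a connected set, it is itself connected, and one can extract a chain $B(w_{i_1}, r_k/2), \ldots, B(w_{i_{m'}}, r_k/2)$ of overlapping balls with $w_{i_1} = y_k$ and $w_{i_{m'}} = y_{k+1}$. Each consecutive pair is joined by a two-segment polygonal path through a common point on which $\delta_\Omega \geq r_k/2$, giving $k_\Omega$-cost $O(1)$ per step and hence $k_\Omega(y_k, y_{k+1}) = O(L^2)$.

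The tail step $y_N \to z$ is analogous: $\alpha[y_N, z]$ has Euclidean diameter at most $D(y_N) \leq \delta_\Omega(z)$, and every point $y$ on it satisfies $\delta_\Omega(y) \geq \max\lp D(y)/L, \delta_\Omega(z) - |y - z| \rp \gtrsim \delta_\Omega(z)/L$, so the same covering chain gives $k_\Omega(y_N, z) = O(L^2)$. Summing, $\rho_\Omega(x,z) \leq 2 k_\Omega(x,z) \leq 2(N+1) \cdot O(L^2)$, and the bound on $N$ produces the stated logarithmic inequality after absorbing the $O(\log L)$ additive term into $C$ and, if necessary, enlarging $1/\beta$ to ensure $\beta \leq 1$. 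The main obstacle is precisely the per-step estimate: since $\alpha$ is only continuous and may be highly non-rectifiable, one cannot bound $k_\Omega(y_k, y_{k+1})$ by integrating $1/\delta_\Omega$ along the arc. The workaround relies on the John condition to make each sub-arc thick relative to its diameter, so that connectedness of the arc together with an area count on a finite covering yields a definite chain of $O(L^2)$ overlapping balls in $\Omega$.
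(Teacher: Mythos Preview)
Your argument is correct and follows the same underlying strategy as the paper's proof --- pass to the quasi-hyperbolic metric, then show that at each dyadic scale only $O_L(1)$ ``pieces'' are needed, so the total cost is logarithmic. The execution differs: the paper fixes a Whitney decomposition of $\Omega$, shows via \eqref{Qrest} that every Whitney square meeting $\alpha[x,z]$ has side length between $\delta_\Omega(z)/(10L)$ and $200L\,\delta_\Omega(x)$, and then uses a packing bound to show there are at most $C_1(L)$ squares at each dyadic size; summing and invoking \eqref{C_0} gives the result. You instead decompose the arc itself via the level sets of $D(y)=\diam(\alpha[y,z])$ and build an explicit ball-chain on each dyadic piece. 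Your route is slightly more self-contained (it avoids Lemma~\ref{qhmetric} and the ambient Whitney machinery), while the paper's version plugs directly into the Whitney framework used throughout Section~\ref{prelim}.

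One point to tighten: the sentence ``an area count bounds the number of balls by $O(L^2)$'' does not follow from an arbitrary compactness cover, since such a cover can be highly redundant. What you want is a maximal $r_k/2$-separated set $\{w_i\}$ on $\alpha[y_k,y_{k+1}]$; then the balls $B(w_i,r_k/2)$ still cover the arc, the smaller balls $B(w_i,r_k/4)$ are pairwise disjoint and all lie in $B(y_k,3Lr_k)$, and the area comparison gives at most $(12L)^2$ centers. With that adjustment, the connected-graph/chain extraction and the two-segment $k_\Omega$-estimate go through exactly as you wrote. Also note (as you implicitly use) that $\delta_\Omega(z)\le (L+1)\delta_\Omega(x)$ since $|x-z|\le D(x)\le L\delta_\Omega(x)$, so the logarithm is bounded below by $-\log(L+1)$ and the case $N=0$ is absorbed into the additive constant $C$.
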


The proof of this lemma follows standard techniques used to study the quasi-hyperbolic metric. For similar statements, see \cite[Section 3]{GHM89}.

\begin{proof}
Let $\mathcal{W}$ be a Whitney decomposition of $\Omega$, and fix $x \in \alpha$. Recall that $\diam_{k_{\Omega}}(Q) \leq 2$ for each $Q \in \mathcal{W}$, so 
\begin{equation} \label{C_0}
\rho_{\Omega}(z,x) \leq 2k_{\Omega}(z,x) \leq 4 N(\alpha[x,z]),
\end{equation}
where $N(\alpha[x,z])$ is the number of squares in $\mathcal{W}$ that $\alpha[x,z]$ intersects.

If $x$ lies in a Whitney square that intersects a Whitney square containing $z$, then $\rho_{\Omega}(x,z) \leq 8$, and the desired inequality holds by taking $C$ large enough. Otherwise, if $Q \in \mathcal{W}$ contains $z$, then $\diam(\alpha[x,z]) \geq \diam(Q)/15$, and so
$$\delta_{\Omega}(z) \leq \diam(Q) + \dist(Q,\partial \Omega) \leq 5\diam(Q) \leq 75 \diam(\alpha[x,z]).$$
Thus, we may assume that $\delta_{\Omega}(z) \leq 75\diam(\alpha[x,z])$.

Using the $L$-John condition, we first show that any $Q \in \mathcal{W}$ intersecting $\alpha[x,z]$ must have
\begin{equation} \label{Qrest}
\frac{\delta_{\Omega}(z)}{10L} \leq \diam(Q) \leq 200L \delta_{\Omega}(x).
\end{equation}
Indeed, if $w \in Q \cap \alpha[x,z]$, then the Whitney property implies that
$$\delta_{\Omega}(w) \leq \diam(Q) + \dist(Q,\partial \Omega) \leq 5\diam(Q),$$
and so the $L$-John property gives
$$\delta_{\Omega}(z) \leq \diam(\alpha[w,z]) + \delta_{\Omega}(w) \leq (L+1) \delta_{\Omega}(w) \leq 10L\diam(Q).$$
Similarly, we have 
$$\begin{aligned}
\diam(Q) &\leq 2\dist(Q,\partial \Omega) \leq 2\delta_{\Omega}(w) \leq 2(\diam(\alpha[w,z]) + \delta_{\Omega}(z)) \\
&\leq 200\diam(\alpha[x,z]) \leq 200L\delta_{\Omega}(x),
\end{aligned}$$
which gives the other inequality in \eqref{Qrest}.

For each $j \in \Z$, let $\mathcal{W}_j$ be the collection of $Q \in \mathcal{W}$ for which $Q \cap \alpha[x,z] \neq \emptyset$ and $\diam(Q) = \sqrt{2}\cdot 2^j$. Then \eqref{Qrest} implies that $\mathcal{W}_j$ is empty unless
$$\log_2 \lp \frac{\delta_{\Omega}(z)}{20L} \rp \leq j \leq \log_2 \lp 200L\delta_{\Omega}(x) \rp.$$
Moreover, if $Q \in \mathcal{W}_j$, then the $L$-John condition ensures that $Q \subset B(z,10L\cdot 2^j)$. Any such $Q$ contains a ball of radius $\geq 2^{j-1}$ in its interior, so the doubling property of Lebesgue measure guarantees that $\# \mathcal{W}_j \leq C_1$, where $C_1$ depends only on $L$. We can then estimate
$$N(\alpha[x,z]) = \sum_{j \in \Z} \# \mathcal{W}_j \leq C_2 \log \lp \frac{\delta_{\Omega}(x)}{\delta_{\Omega}(z)} \rp + C_3,$$
where $C_2,C_3 \geq 1$ depend only on $L$. The desired conclusion now follows from the bound in \eqref{C_0}.
\end{proof}

Recall from the previous section that a domain $\Omega = \Hy \backslash K$, which is the complement in $\Hy$ of a half-plane hull $K$, is said to be a $(\beta,C_0)$-H\"older domain if the hydrodynamic conformal map $f \colon \Hy \rightarrow \Omega$ satisfies the H\"older continuity condition
$$|f(z) - f(z')| \leq C_0 \max\{ |z-z'|^{\beta}, |z-z'| \}$$
for all $z,z' \in \Hy$. We should remark that, in the literature, H\"older domains are typically defined as H\"older continuous conformal images of $\D$ and, as such, are necessarily bounded. Many of the facts that we establish here have classical analogs for bounded H\"older domains, and although it is a technical nuisance to work with unbounded domains, the ideas we use are very similar to those found in the bounded setting. We begin with the following lemma which gives an equivalent condition for the H\"older property in terms of $|f'|$ (see \cite[Theorem 5.1]{Dur70} for the analogous statement about bounded H\"older domains). 

\begin{lemma} \label{f'equiv}
Let $f \colon \Hy \rightarrow \Omega$ be a conformal map onto a domain $\Omega$, and let $0 < \beta \leq 1$. Then the following are quantitatively equivalent.
\begin{enumerate}
\item[\textup{(i)}] There is $C_0 > 0$ such that $|f(z)-f(z')| \leq C_0 \max\{ |z-z'|^{\beta}, |z-z'| \}$ for all $z,z' \in \Hy$.
\item[\textup{(ii)}] There is $C_1 > 0$ such that $|f'(z)| \leq C_1 \max \{\Im(z)^{\beta - 1}, 1\}$ for all $z \in \Hy$.
\end{enumerate}
By quantitatively equivalent, we mean that $C_1$ depends only on $\beta$ and $C_0$, and that $C_0$ depends only on $\beta$ and $C_1$.
\end{lemma}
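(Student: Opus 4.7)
The plan is to prove the two implications separately, each by a short, standard argument from conformal-mapping theory.

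For (i) $\Rightarrow$ (ii), I would apply Cauchy's integral formula on the disk $B(z, r) \subset \Hy$ of radius $r = \Im(z)/2$, namely
$$|f'(z)| = \left|\frac{1}{2\pi i}\int_{\partial B(z,r)}\frac{f(w)-f(z)}{(w-z)^2}\,dw\right| \leq \frac{1}{r}\sup_{|w-z|=r}|f(w)-f(z)| \leq C_0 \max\{r^{\beta-1},\,1\},$$
where the last inequality uses the H\"older hypothesis. Since $r = \Im(z)/2$, this gives $|f'(z)| \leq 2C_0 \max\{\Im(z)^{\beta-1},\,1\}$, i.e., (ii) with $C_1 \leq 2C_0$.

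For (ii) $\Rightarrow$ (i), I would integrate $|f'|$ along a three-segment path from $z$ to $z'$. Setting $h = |z-z'|$ and $H = \max\{\Im(z), \Im(z'), h\}$, the path $\gamma$ consists of (a) the vertical segment from $z$ up to $\Re(z) + iH$, (b) the horizontal segment from $\Re(z) + iH$ to $\Re(z') + iH$, and (c) the vertical segment from $\Re(z') + iH$ down to $z'$. The horizontal leg (b) has length $\leq h$ and lies at height $H \geq h$, so $|f'| \leq C_1 \max\{h^{\beta-1}, 1\}$ along it, contributing at most $C_1 \max\{h^\beta, h\}$ to $\int_\gamma |f'|\,|dw|$. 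Each vertical leg reduces to $\int_{\Im(z)}^{H} C_1 \max\{t^{\beta-1}, 1\}\,dt$ over an interval of length $H - \Im(z) \leq h$ (and symmetrically for $z'$).

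The only slightly delicate step is bounding the vertical integral by a multiple of $\max\{h^\beta, h\}$ when $[\Im(z), H]$ straddles the transition point $t=1$. I would split at $t=1$: on $[\Im(z), 1]$ (when $\Im(z) < 1$) the integral equals $(1 - \Im(z)^\beta)/\beta$, which by the elementary subadditivity inequality $1 = (y+(1-y))^\beta \leq y^\beta + (1-y)^\beta$ (valid for $y \in [0,1]$ and $\beta \in (0,1]$) is at most $(1-\Im(z))^\beta/\beta \leq h^\beta/\beta$; on $[1, H]$ the integrand is $1$ and the length is $\leq h$. Summing all three segments yields $|f(z)-f(z')| \leq C \max\{h^\beta, h\}$ with $C$ depending only on $\beta$ and $C_1$, which is (i). The only real obstacle is the bookkeeping in this second direction — choosing a single path that handles all cases ($h$ small or large, and $\Im(z), \Im(z')$ above or below $1$) uniformly.
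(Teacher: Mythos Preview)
Your proof is correct and follows the same overall plan as the paper---derivative bound from the H\"older estimate on a disk of radius $\Im(z)/2$, then path integration for the converse---but the implementations differ in a notable way. The paper invokes the Koebe distortion theorem in both directions: for (i)$\Rightarrow$(ii) it uses $|f'(z)|\diam(B)\lesssim\diam(f(B))$, and for (ii)$\Rightarrow$(i) it splits into the cases $\Im(z)<2|z-z'|$ (two vertical segments up to height $2|z-z'|$, then Koebe on the ball $B(w,\Im(w)/2)$ to bridge the gap) and $\Im(z)\ge 2|z-z'|$ (Koebe directly, since $z'\in\overline{B}(z,\Im(z)/2)$). You avoid Koebe entirely: Cauchy's integral formula replaces it in the first direction, and in the second you simply integrate along the horizontal segment at height $H$ rather than appealing to distortion on a ball. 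Your bookkeeping with the subadditivity inequality $(a+b)^\beta\le a^\beta+b^\beta$ is exactly what is needed to control the vertical integrals when the interval straddles $t=1$; the paper glosses over this step by writing $\int_{\Im(z)}^{2h}\max\{t^{\beta-1},1\}\,dt\lesssim\max\{h^\beta,h\}$ without comment (it can afford to, since it may as well integrate from $0$). The upshot is that your argument never uses univalence of $f$, so it actually proves the equivalence for arbitrary holomorphic $f\colon\Hy\to\C$, a mild strengthening of the stated lemma.
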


\begin{proof}
The implication ``(i) implies (ii)" follows almost immediately from the Koebe distortion theorem. Indeed, for $z \in \Hy$, let $B := B(z,\Im(z)/2)$. Then we have $|f'(z)|\diam(B) \leq 10\diam(f(B))$. Using the bounds in (i), we also know that $\diam(f(B)) \leq C_0 \max \{\diam(B)^{\beta}, \diam(B) \}$, so
$$|f'(z)| \lesssim \max\{ \Im(z)^{\beta}, \Im(z) \} / \Im(z) \lesssim \max \{ \Im(z)^{\beta -1}, 1 \},$$
where the implicit constant depends only on $C_0$ and $\beta$.

The opposite implication is slightly more difficult but straightforward. The idea is to bound $|f(z) -f(z')|$ by the length of the hyperbolic geodesic in $f(\Hy)$ that joins the two points. It is easier to carry this out in the following way. Without loss of generality, assume that $\Im(z') \leq \Im(z)$. As a first case, we suppose that $\Im(z) < 2|z-z'|$. Let $\gamma$ and $\gamma'$ denote, respectively, the vertical segments beginning at $z$ and $z'$, going up to height $2|z-z'|$. Let $w$ and $w'$ denote, respectively, their endpoints at height $2|z-z'|$ so that $w' \in B := B(w, \Im(w)/2)$. We then estimate
$$|f(z)-f(z')| \leq \length(f(\gamma)) + \length(f(\gamma')) + \diam(f(B)).$$
Notice that
$$\begin{aligned}
\length(f(\gamma)) &= \int_{\Im(z)}^{2|z-z'|} |f'(\Re(z)+it)| dt \leq C_1 \int_{\Im(z)}^{2|z-z'|} \max \{t^{\beta-1},1\}dt \\
&\lesssim \max \{|z-z'|^{\beta}, |z-z'| \},
\end{aligned}$$
where the implicit constant depends only on $C_1$ and $\beta$. A similar bound holds for $\length(f(\gamma'))$. To control $\diam(f(B))$, we use the Koebe distortion theorem again:
$$\begin{aligned}
\diam(f(B)) &\leq 10|f'(w)| \diam(B) \lesssim \max \{\Im(w)^{\beta-1},1\} \Im(w) \\
&\lesssim \max \{|z-z'|^{\beta}, |z-z'| \},
\end{aligned}$$
with implicit constant depending only on $C_1$ and $\beta$. This gives the desired bound on $|f(z)-f(z')|$.

For the remaining case, when $\Im(z) \geq 2|z-z'|$, we once again use the Koebe distortion theorem, along with the fact that $z' \in \cl{B}(z,\Im(z)/2)$, to obtain
$$\begin{aligned}
|f(z)-f(z')| &\leq 10|f'(z)| \cdot |z-z'| \lesssim \max \{\Im(z)^{\beta-1}, 1\}|z-z'| \\
&\lesssim \max \{|z-z'|^{\beta}, |z-z'| \},
\end{aligned}$$
where the constant again depends only on $C_1$ and $\beta$.
\end{proof}

A second condition that characterizes H\"older domains, originally due to J. Becker and C. Pommerenke \cite{BP82} in the bounded setting, is a growth condition on the hyperbolic metric for points close to the boundary.

\begin{lemma} \label{hypgrowth}
Let $\Omega = \Hy \backslash K$, where $K \neq \emptyset$ is a half-plane hull, and assume that $\Omega$ is a $(\beta, C_0)$-H\"older domain. Then there is a point $z_0 \in \Omega$ and a constant $C>0$, depending only on $\beta$ and $C_0$, such that
\begin{equation} \label{Holderrho}
\rho_{\Omega}(z_0,z) \leq \frac{1}{\beta} \log \lp \frac{\max\{ \diam(K)^{\beta}, \diam(K)\} }{\delta_{\Omega}(z)} \rp + C
\end{equation}
for all $z \in \Omega$ that lie on a hyperbolic geodesic segment joining $z_0$ to a point in the set $D_K := \{w \in \Omega : \dist(w,K) \leq 100 \diam(K) \}$.
\end{lemma}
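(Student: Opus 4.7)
The plan is to transfer the problem to $\Hy$ via $g = f^{-1}$ and use the $|f'|$-characterization of H\"older domains from Lemma \ref{f'equiv}. Choose $x_K \in \R$ so that $K \subset B(x_K, 2\diam(K))$, set $T_0 = 200\diam(K)$, $w_0 = x_K + iT_0$, and define $z_0 = f(w_0)$. Since $f$ is a hyperbolic isometry, $\rho_\Omega(z_0, z) = \rho_\Hy(w_0, w)$ where $w := g(z)$, reducing the problem to computing hyperbolic distances in $\Hy$.

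The argument then rests on two estimates that I will combine. First, Lemma \ref{f'equiv} gives $|f'(w)| \leq C_1 \max\{\Im(w)^{\beta-1}, 1\}$, and combined with the comparison \eqref{qhcomp} (Koebe distortion) this yields $\delta_\Omega(z) \leq 2 C_1 \max\{\Im(w)^\beta, \Im(w)\}$, so rearranging gives the key lower bound $\log(1/\Im(w)) \leq (1/\beta)\log(1/\delta_\Omega(z)) + O_{\beta, C_0}(1)$ at least when $\delta_\Omega(z) \leq 2C_1$. Second, since $z$ lies on a hyperbolic geodesic from $z_0$ to some $z_1 \in D_K$, its preimage $w$ lies on the $\Hy$-geodesic from $w_0$ to $w_1 := g(z_1)$; Lemma \ref{supp}(v) gives $|z_1 - w_1| \leq 3\diam(K)$, and $z_1 \in D_K$ gives $|z_1 - x_K| \leq 102\diam(K)$, so $|w_1 - x_K| \leq 105\diam(K) \leq T_0/2$ and hence $|w_0 - w_1| \leq (3/2)\,T_0$. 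Parameterizing the geodesic as a circular arc perpendicular to $\R$ (or a vertical segment) and checking that $|w(\theta_0) - w(\theta)|^2 = 2r^2(1 - \cos(\theta - \theta_0))$ is monotone in $|\theta - \theta_0|$ gives $|w_0 - w| \leq |w_0 - w_1|$ for $w$ between the endpoints; plugging into
$$\cosh\rho_\Hy(w_0,w) = 1 + \frac{|w_0-w|^2}{2\,\Im(w_0)\,\Im(w)}$$
produces $\rho_\Hy(w_0, w) \leq \log^+(T_0/\Im(w)) + C_2$ for an absolute constant $C_2$.

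Combining these two estimates and using $\log T_0 = \log(200\diam(K)) \leq (1/\beta)\log M + \log 200$ (which follows from $M \geq \diam(K)^\beta$, forcing $(1/\beta)\log M \geq \log\diam(K)$) yields the desired inequality in the range $\delta_\Omega(z) \leq 2C_1$. The main obstacle is the complementary range $\delta_\Omega(z) > 2C_1$, where only $\Im(w) \geq \delta_\Omega(z)/(2C_1)$ holds (the H\"older hypothesis loses the $1/\beta$ factor for large values). To handle this, I would separately observe that along the geodesic arc $\Im(w) \lesssim T_0$ (bounding the apex height by $r \leq \sqrt{2}\,T_0$ when the arc passes through it), which forces $\delta_\Omega(z) \lesssim \diam(K) \lesssim M$. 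This keeps $M/\delta_\Omega(z)$ bounded below, so the right-hand side stays non-negative and the constant $C = C(\beta, C_0)$ can absorb the remaining discrepancy.
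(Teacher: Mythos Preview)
Your proposal is correct and follows essentially the same approach as the paper: transfer to $\Hy$ via $f^{-1}$, use the derivative bound $|f'(w)| \lesssim \max\{\Im(w)^{\beta-1},1\}$ from Lemma~\ref{f'equiv} together with Koebe to control $\delta_\Omega(z)$, and use that $w$ lies on an $\Hy$-geodesic from $w_0$ to a point within $\approx \diam(K)$ of the base to get $\rho_\Hy(w_0,w) \leq \log(\Im(w_0)/\Im(w)) + O(1)$. The paper's case split is on $\Im(w) \gtrless 1$ rather than on $\delta_\Omega(z) \gtrless 2C_1$, which makes the endgame slightly cleaner (and avoids the separate apex-height argument), but the content is the same; note also that your apex bound $r \leq \sqrt{2}\,T_0$ is a small numerical slip (the correct bound is $r \leq \tfrac{2+\sqrt{22}}{4}\,T_0 \approx 1.67\,T_0$), though this does not affect the argument.
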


Before giving the proof, we should make a few remarks. First, the set $D_K$ is not important in itself. Indeed, it is straightforward to show that the growth condition in \eqref{Holderrho} for points $z$ lying on geodesics that join $z_0$ to $D_K$ is equivalent to the same growth condition for all points $z$ in any fixed \ti{bounded} set that contains $K$ in its interior, as long as one is willing to change the additive constant $C$. We prefer, however, to specify $D_K$ in order to obtain quantitative control of constants. In a similar way, one could change the base-point $z_0 \in \Omega$, as long as $C$ was also allowed to change. Lastly, one could subsume the numerator $\max\{\diam(K)^{\beta}, \diam(K)\}$ into the additive constant $C$, but it will be important for us to keep track of how $\diam(K)$ affects the constants.

\begin{proof}
For notational ease, let $r = \diam(K) >0$, and let $x \in \R$ be a point in $\cl{K}$. By Lemma \ref{supp}(v), we know that
\begin{equation} \label{incl}
D_K \subset f(B(x,104r)).
\end{equation}
Let $w_0 = x + 104r i \in \Hy$, and set $z_0 = f(w_0) \in \Omega$. Using Lemma \ref{supp}(v) again, we observe that
$$\dist(z_0,K) \leq 107 r \hspace{0.3cm} \text{and} \hspace{0.3cm} \Im(z_0) \geq 100r.$$

Now, fix $z \in \Omega$ lying on a hyperbolic geodesic segment joining $z_0$ to a point in $D_K$. Let $w = f^{-1}(z) \in \Hy$. From the inclusion in \eqref{incl}, we know that $w$ lies on a hyperbolic geodesic in $\Hy$ that joins $w_0$ to a point in $B(x, 104r)$. In particular, this means that
$$\rho_{\Hy}(w_0,w) \leq \log \lp \frac{\Im(w_0)}{\Im(w)} \rp + C,$$
where $C>0$ is an absolute constant. As $\Omega$ is a $(\beta,C_0)$-H\"older domain, Lemma \ref{f'equiv} gives the estimate $|f'(w)| \lesssim \max \{\Im(w)^{\beta-1}, 1\}$, with implicit constant depending on $\beta$ and $C_0$. Thus, the Koebe distortion theorem gives
$$\delta_{\Omega}(z) \lesssim |f'(w)| \Im(w) \lesssim \max \{\Im(w)^{\beta},\Im(w) \}.$$
If $\Im(w) \geq 1$, then this implies that
$$\rho_{\Omega}(z_0,z) = \rho_{\Hy}(w_0,w) \leq \log \lp \frac{\Im(w_0)}{\Im(w)} \rp + C 
\leq \log \lp \frac{\diam(K)}{\delta_{\Omega}(z)} \rp + C',$$
where $C'>0$ depends only on $\beta$ and $C_0$.
If $\Im(w) < 1$, then we similarly obtain
$$\rho_{\Omega}(z_0,z) \leq \log \lp \frac{\diam(K)}{\delta_{\Omega}(z)^{1/\beta}} \rp + C'' 
\leq \frac{1}{\beta} \log \lp \frac{\diam(K)^{\beta}}{\delta_{\Omega}(z)} \rp + C'',$$
where $C''>0$ again depends only on $\beta$ and $C_0$.
\end{proof}

For later use, we should note that the growth condition in \eqref{Holderrho} implies that $\Omega$ is a $(\beta, C')$-H\"older domain for some $C' >0$. It is important here, however, that $C'$ is allowed to depend on $z_0$, or more precisely, that it is allowed to depend on the distance between $z_0$ and $K$. We record this fact in the following lemma.

\begin{lemma} \label{derivative}
Let $\Omega = \Hy \backslash K$, where $K \neq \emptyset$ is a half-plane hull. Let $z_0 \in \Omega$ be a point for which $\Im(z_0) \geq R$ and $\dist(z_0,K) \leq 100R$ for some value $R \geq 10\diam(K)$. Suppose that there is a constant $C > 0$ such that
$$\rho_{\Omega}(z_0,z) \leq \frac{1}{\beta} \log \lp \frac{1}{\delta_{\Omega}(z)} \rp + C$$
for all $z \in \Omega$ with $\dist(z,K) \leq 10\diam(K)$. Then $\Omega$ is a $(\beta, C')$-H\"older domain, where $C' > 0$ depends only on $\beta$, $C$, and $R$.
\end{lemma}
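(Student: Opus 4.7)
The plan is to reduce, via Lemma \ref{f'equiv}, to the pointwise bound $|f'(w)| \lesssim \max\{\Im(w)^{\beta-1}, 1\}$ for the hydrodynamic map $f\colon \Hy \to \Omega$. First, I set $w_0 = f^{-1}(z_0)$ and pin down its location: since $|f(w_0) - w_0| \leq 3\rad(K) \leq 3\diam(K) \leq 3R/10$ by Lemma \ref{supp}(v) and $\Im(z_0) \geq R$, one gets $\Im(w_0) \geq 7R/10$. I also fix $x \in \R$ with $K \subset \cl{B}(x, \rad(K))$ and split the analysis by $|w - x|$.

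In the ``far'' regime, where $|w - x| \geq M\diam(K)$ for a suitable absolute $M$, the integral representation of $f$ combined with parts (ii) and (iv) of Lemma \ref{supp} yields $|f'(w) - 1| \leq 1/2$, and hence $|f'(w)| \leq 3/2$. Since $\max\{\Im(w)^{\beta-1},1\} \geq 1$, the target inequality is immediate. This is essentially the estimate \eqref{fardist}, modulo tightening of absolute constants.

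In the ``near'' regime, where $|w - x| < M\diam(K)$, Lemma \ref{supp}(v) gives $|f(w) - x| \leq (M+3)\diam(K)$, and for $M$ chosen so that $\dist(f(w),K) \leq 10\diam(K)$, the hypothesis of the lemma applies at the point $f(w) \in \Omega$. Moreover, $\Im(w) \leq M\diam(K) \leq MR/10 < \Im(w_0)$, so the elementary bound $\rho_\Hy(w_0, w) \geq \log(\Im(w_0)/\Im(w))$ holds. Combining the conformal invariance $\rho_\Hy(w_0,w) = \rho_\Omega(z_0, f(w))$ with the hypothesis gives
\[
\log(\Im(w_0)/\Im(w)) \leq \tfrac{1}{\beta}\log(1/\delta_\Omega(f(w))) + C,
\]
which rearranges to $\delta_\Omega(f(w)) \leq e^{\beta C}(\Im(w)/\Im(w_0))^\beta$. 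The Koebe $1/4$-theorem then yields $|f'(w)| \leq 4\delta_\Omega(f(w))/\Im(w) \lesssim R^{-\beta}\Im(w)^{\beta - 1}$, with implicit constants depending only on $\beta$ and $C$. Splitting into $\Im(w) \leq 1$ versus $\Im(w) > 1$ packages this as $|f'(w)| \leq C_1 \max\{\Im(w)^{\beta-1}, 1\}$ with $C_1$ depending on $\beta$, $C$, and $R$.

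The main obstacle is the bookkeeping needed to ensure that the two regimes together exhaust $\Hy$ while the hypothesis $\dist(z,K) \leq 10\diam(K)$ applies on the nose in the near regime. The absolute constants in Lemma \ref{supp} (especially the $3\rad(K)$ bound in (v) and the $2\rad(K)$ containment of $\supp(\mu_K)$ in (ii)) must be coordinated with the factor $10$ in the hypothesis; choosing $M$ in a narrow range (roughly $6 \leq M \leq 7$), together with a mild sharpening of \eqref{fardist} from the direct integral bound, covers both regimes without a gap. No conceptual difficulty arises beyond this arithmetic coordination.
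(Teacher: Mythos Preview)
Your proof is correct and matches the paper's approach: reduce via Lemma~\ref{f'equiv} to the pointwise derivative bound, handle the far regime by the integral representation (essentially \eqref{fardist}), and in the near regime combine the hypothesis with the lower bound $\rho_{\Hy}(w_0,w)\geq\log(\Im(w_0)/\Im(w))$ and Koebe's $1/4$-theorem. The only cosmetic difference is that the paper splits on $\dist(w,K)$ versus $7\diam(K)$ rather than on $|w-x|$, which sidesteps the arithmetic coordination you flag (and for the record, $M=6$ works while $M=7$ just misses the $10\diam(K)$ threshold).
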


\begin{proof}
Let $f \colon \Hy \rightarrow \Hy \backslash K$ be the hydrodynamic conformal map. We want to establish the bound 
$$|f'(w)| \lesssim \max \{ \Im(w)^{\beta -1}, 1\}, \hspace{0.3cm} \text{for all } w \in \Hy,$$
where the implicit constants depend only on $\beta$, $C$, and $R$. For notational ease, we again let $r=\diam(K) >0$. First note that if 
$\dist(w,K) \geq 7r$, then the estimate in \eqref{fardist} shows that $|f'(w)| \leq 3/2$. 

Thus, we may assume that $\dist(w,K) < 7r$. In this case, Lemma \ref{supp}(v) gives $\dist(f(w),K) \leq 10r$. Let $z = f(w)$, so our hypothesis implies that
$$\rho_{\Omega}(z_0,z) \leq \frac{1}{\beta} \log \lp \frac{1}{\delta_{\Omega}(z)} \rp + C.$$
Now let $w_0  = f^{-1}(z_0) \in \Hy$. Observe that $\dist(w_0,K) \leq 100R + 3r \leq 101R$ and $\Im(w_0) \geq R - 3r \geq R/2$; in particular, $\dist(w_0,K)$ and $\Im(w_0)$ are both comparable to $R$. This implies that
$$\rho_{\Hy}(w_0,w) \geq \log \lp \frac{1}{\Im(w)} \rp -C',$$
where $C' >0$ depends only on $R$. Together with the upper bound on $\rho_{\Omega}(z_0,z)  = \rho_{\Hy}(w_0,w)$, we have
$$\log \lp \frac{1}{\Im(w)} \rp \leq \frac{1}{\beta} \log \lp \frac{1}{\delta_{\Omega}(z)} \rp + C'',$$
where $C''$ depends only on $C$ and $R$. Thus,
$$\delta_{\Omega}(z) \lesssim \Im(w)^{\beta},$$
with implicit constant depending on $\beta$, $C$, and $R$. To conclude, note that Koebe's 1/4-theorem guarantees that $\delta_{\Omega}(z) \approx |f'(w)| \Im(w)$, with absolute constants, so $|f'(w)| \lesssim \Im(w)^{\beta -1}.$
\end{proof}

Observe that the base-point $z_0$ we found during the proof of Lemma \ref{hypgrowth} had $\dist(z_0,K) \leq 200\diam(K)$ and $\Im(z_0) \geq 100\diam(K)$. In particular, it satisfies the hypotheses in the previous lemma, with $R = 10\diam(K)$.

It turns out that one can strengthen the statement given in Lemma \ref{hypgrowth}, and the stronger version of it will be of particular importance for our analysis of H\"older domains. The result is originally due to W. Smith and D. Stegenga \cite[Theorem 3]{SS90} in the setting of bounded H\"older domains. For completeness, we give a proof, but it is mostly an adaptation of their proof to our setting. Once again, we must keep track of how $\diam(K)$ affects the bounds.

\begin{lemma} \label{hypext}
Assume that $\Omega = \Hy \backslash K$ is a $(\beta,C_0)$-H\"older domain. Let $z_0 \in \Omega$ be the base-point given by Lemma \ref{hypgrowth}. There is a constant $C >0$, depending only on $\beta$ and $C_0$, such that whenever $\ell$ is a hyperbolic geodesic segment from $z_0$ to a point $z_1 \in D_K$, we have
$$\rho_{\Omega}(z_0,x) \leq \frac{1}{\beta} \log \lp \frac{\max \{ \diam(K)^{\beta}, \diam(K) \} }{\length(\ell[x,z_1])} \rp + C$$
for each point $x \in \ell$.
\end{lemma}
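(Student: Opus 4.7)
The plan is to argue by discretization, in the spirit of Smith--Stegenga. Fix a geodesic segment $\ell$ from $z_0$ to $z_1 \in D_K$, and a point $x \in \ell$. The goal is to replace the pointwise quantity $\delta_\Omega(x)$ in Lemma~\ref{hypgrowth} with the cumulative Euclidean length $\length(\ell[x,z_1])$, which is potentially much larger and so yields a genuinely stronger bound.

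First, I would discretize: set $N = \lceil \rho_\Omega(x, z_1) \rceil$ and choose points $x = x_0, x_1, \ldots, x_N = z_1$ along $\ell[x, z_1]$ with hyperbolic spacing $\rho_\Omega(x_i, x_{i+1}) \approx 1$. On a unit hyperbolic ball centered at $x_i$, Koebe's distortion theorem and the comparison \eqref{qhcomp} show that $\delta_\Omega(z) \approx \delta_\Omega(x_i)$ throughout; converting the unit of hyperbolic length into Euclidean length along the geodesic then gives $\length(\ell[x_i, x_{i+1}]) \approx \delta_\Omega(x_i)$ with absolute implicit constants. Summing,
\begin{equation*}
\length(\ell[x, z_1]) \;\lesssim\; \sum_{i=0}^{N-1} \delta_\Omega(x_i).
\end{equation*}

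Next, each $x_i$ lies on the geodesic $\ell$ from $z_0$ to $z_1 \in D_K$, so Lemma~\ref{hypgrowth} applies pointwise at each $x_i$ and yields
\begin{equation*}
\delta_\Omega(x_i) \;\lesssim\; M \exp\bigl( -\beta \rho_\Omega(z_0, x_i) \bigr),
\end{equation*}
where $M = \max\{\diam(K)^\beta, \diam(K)\}$ and the implicit constant depends only on $\beta$ and $C_0$. Because the $x_i$ are ordered consecutively along a geodesic starting at $z_0$ and passing through $x = x_0$, the additivity of hyperbolic distance along geodesics gives $\rho_\Omega(z_0, x_i) \geq \rho_\Omega(z_0, x) + c \cdot i$ for an absolute constant $c > 0$ coming from the spacing. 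Substituting and summing the resulting geometric series, which converges since $e^{-\beta c} < 1$, yields
\begin{equation*}
\length(\ell[x, z_1]) \;\lesssim\; M \exp\bigl( -\beta \rho_\Omega(z_0, x) \bigr),
\end{equation*}
with implicit constant depending only on $\beta$ and $C_0$. Taking logarithms and rearranging then produces the advertised inequality. The degenerate case $\rho_\Omega(x, z_1) \leq 1$ is handled directly by Lemma~\ref{hypgrowth} applied at $x$, together with the Koebe-based estimate $\length(\ell[x, z_1]) \lesssim \delta_\Omega(x)$.

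The main technical input is the first-step comparability $\length(\ell[x_i, x_{i+1}]) \approx \delta_\Omega(x_i)$, though it rests on standard Koebe bounds for distortion across a hyperbolic unit ball. Once this is in hand, the rest is a bookkeeping exercise: one combines the exponential decay of $\delta_\Omega(x_i)$ guaranteed by Lemma~\ref{hypgrowth} with the convergence of a geometric series in order to upgrade the pointwise logarithmic bound into a cumulative one measured by Euclidean length along the geodesic.
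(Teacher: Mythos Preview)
Your proof is correct, and it takes a genuinely different route from the paper's. The paper follows Smith--Stegenga more literally: it argues by contradiction, assuming the bound fails at some $x_0$ with a large constant $C_1$, then discretizes $\ell[x_0,z_1]$ \emph{dyadically in Euclidean length} (defining $x_k$ so that $\length(\ell[x_{k-1},x_k]) = L/2^k$), and runs a somewhat delicate induction to show that the quantities $\lambda_k = \sup\{1/(L\rho_\Omega(z)) : z \in \ell[x_k,z_1]\}$ satisfy $\lambda_k \leq \lambda_0^{k+1}$, which forces $1/\rho_\Omega(z_1) = 0$, a contradiction.

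Your approach discretizes instead by \emph{hyperbolic} unit steps and proceeds directly, with no contradiction and no induction. The key trade is that hyperbolic unit spacing makes the Koebe comparison $\length(\ell[x_i,x_{i+1}]) \lesssim \delta_\Omega(x_i)$ immediate, and then the pointwise bound of Lemma~\ref{hypgrowth} converts the sum into a geometric series in $e^{-\beta c i}$. This is cleaner and more transparent than the paper's argument; the paper's dyadic-in-length scheme is closer to the original Smith--Stegenga proof but pays for it with the inductive bookkeeping on the $\lambda_k$. Both ultimately exploit the same phenomenon---exponential decay of $\delta_\Omega$ along the geodesic forced by Lemma~\ref{hypgrowth}---but your packaging is more elementary. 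One small point worth making explicit in a final write-up: when you assert $\rho_\Omega(z_0,x_i) \geq \rho_\Omega(z_0,x) + ci$, you are using that $x = x_0$ lies between $z_0$ and each $x_i$ on the geodesic $\ell$, so the distances add; and you should arrange the spacing (e.g., equal pieces of length $\rho_\Omega(x,z_1)/N \in [1/2,1]$) so that $c$ is genuinely an absolute positive constant.
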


\begin{proof}
For simplicity, let $M = \max\{ \diam(K)^{\beta}, \diam(K) \}$. Fix $z_1 \in D_K$, and let $\ell$ be the hyperbolic geodesic segment in $\Omega$ from $z_0$ to $z_1$. Let $C_1$ be a large constant, to be determined, and suppose that there is $x_0 \in \ell$ for which
$$\rho_{\Omega}(z_0,x_0) > \frac{1}{\beta} \log \lp \frac{M}{\length(\ell[x_0,z_1])} \rp + C_1.$$
We wish to arrive at a contradiction if $C_1$ is large enough, depending on $\beta$ and $C_0$.

To this end, let $L = \length(\ell[x_0,z_1])$, and define points $x_k$ recursively by
$$x_k \in \ell[x_{k-1},z_1] \hspace{0.2cm} \text{ such that } \hspace{0.2cm} \length(\ell[x_{k-1},x_k]) = \frac{L}{2^k},$$
for $k \in \N$. Let $\rho_{\Omega}(z)|dz|$ denote the hyperbolic length element in $\Omega$. Recall from earlier that $1/2 \leq \delta_{\Omega}(z) \rho_{\Omega}(z) \leq 2$ for all $z \in \Omega$. Now, we note that if $z \in \ell[x_0,z_1]$, then
$$\begin{aligned}
\frac{1}{\beta} \log \lp \frac{M}{L} \rp + C_1 &< \rho_{\Omega}(z_0,x_0) \leq \rho_{\Omega}(z_0,z)
\leq \frac{1}{\beta} \log \lp \frac{M}{\delta_{\Omega}(z)} \rp + C \\
&\leq \frac{1}{\beta} \log \lp M\rho_{\Omega}(z) \rp + C + \frac{1}{\beta},
\end{aligned}$$
where $C>0$ is the additive constant in Lemma \ref{hypgrowth}. Thus, we find
$$\frac{1}{L\rho_{\Omega}(z)} \leq e^{1+\beta(C-C_1)}$$
for all $z \in \ell[x_0,z_1]$. Choose $C_1$ large enough that $e^{1+\beta(C-C_1)} < \beta/4$, and let
$$\lambda_k = \sup \left\{ \frac{1}{L\rho_{\Omega}(z)} : z \in \ell[x_k,z_1] \right\}$$
for each $k \geq 0$. From what we have already verified, we know that 
$$\lambda_0 \leq e^{1+\beta(C-C_1)} < \beta/4.$$
We claim that, by imposing one more condition on the size of $C_1$, we can ensure that $\lambda_k \leq \lambda_0^{k+1}$ for all $k$. We proceed by induction on $k$, the case $k=0$ being true automatically.

Suppose $\lambda_{k-1} \leq \lambda_0^k$ for some $k \geq 1$. Fix $z \in \ell[x_k,z_1]$ and note that
$$\rho_{\Omega}(x_{k-1},z) = \int_{\ell[x_{k-1},z]} \rho_{\Omega}(w)|dw| \geq \int_{\ell[x_{k-1},z]} \frac{|dw|}{L\lambda_{k-1}} \geq \frac{1}{\lambda_{k-1}2^k}.$$ 
Thus, we can bound
$$\begin{aligned}
&\frac{1}{\beta} \log \lp \frac{M}{L} \rp + C_1 + \frac{1}{\lambda_{k-1}2^k} \leq \rho_{\Omega}(z_0,x_0) + \rho_{\Omega}(x_{k-1},z) \leq \rho_{\Omega}(z_0,z) \\
&\leq \frac{1}{\beta} \log \lp \frac{M}{\delta_{\Omega}(z)} \rp +C.
\end{aligned}$$
This implies that
$$\log \lp \frac{1}{L \rho_{\Omega}(z)} \rp \leq 1+ \log \lp \frac{\delta_{\Omega}(z)}{L} \rp \leq 1 +\beta \lp C -C_1 - \frac{1}{\lambda_{k-1}2^k} \rp$$
for each $z \in \ell[x_k,z_1]$, so by the definition of $\lambda_k$ and the induction hypothesis, we obtain
\begin{equation} \label{lambda0}
\log(\lambda_k) \leq 1 + \beta \lp C - C_1 - \frac{1}{(2\lambda_0)^k} \rp.
\end{equation}
Recall that we chose $C_1$ large enough that $\lambda_0 < \beta/4$. It is straightforward to calculate that for any $k \geq 1$, the function $t \mapsto t^{k+1}e^{\beta/(2t)^k}$ is decreasing on the interval $(0,\beta/4)$. Thus,
$$\lambda_0^{k+1} e^{\beta/(2\lambda_0)^k} \geq \lp \frac{\beta}{4} \rp^{k+1} e^{\beta/(\beta/2)^k} \geq \frac{1}{2} \lp \frac{\beta}{4} \rp^{k+1} \lp \frac{2^k}{\beta^{k-1}} \rp^2 \geq \frac{\beta^2}{8}.$$
Consequently, if we also impose the condition that $C_1$ is large enough that 
$$e^{1+ \beta(C-C_1)} \leq \beta^2/8,$$
then \eqref{lambda0} gives
$$\lambda_k \leq e^{1+\beta(C-C_1)} e^{-\beta/(2\lambda_0)^k} \leq \frac{\beta^2}{8} \cdot e^{-\beta/(2\lambda_0)^k} \leq  \lambda_0^{k+1},$$
as claimed. This completes the induction.

To finish the proof, observe that $z_1 \in \ell[x_k,z_1]$ for each $k$, so 
$$0< \frac{1}{\rho_{\Omega}(z_1)} \leq L\lambda_k \leq L \lambda_0^{k+1}.$$
The desired contradiction comes from taking $k$ large enough.
\end{proof}

Before finishing this section, we return to the discussion at the end of the previous section. There we stated Theorem \ref{Johnprop}, which gives geometric criteria for a Loewner chain to have a $\Lip(1/2)$ driving term. Although this result is interesting in its own right, it also provides motivation for the following theorem, which we view as an analog of Theorem \ref{slit} outside of the ``simple curve" setting. The statement is technical, and we put off further discussion until Section \ref{secnonslit}. Here, we use the notation $\log^+(x) = \max \{\log(x),0 \}$ for $x \geq 0$.

\begin{theorem} \label{nonslit}
Let $K_t$ be a geometric Loewner chain, and let $\Omega_t = \Hy \backslash K_t$ be the complementary domains. Suppose that, for each $0 \leq s < t \leq T$, there are points $x_0 \in \Omega_s$ and $z_0 \in K_t \backslash K_s$ such that
\begin{enumerate}
\item[\textup{(i)}] $\diam_{\Omega_s}(\{x_0\} \cup K_t \backslash K_s) \leq C_0 \delta_{\Omega_s}(x_0)$,
\item[\textup{(ii)}] $\diam_{\Omega_s}(K_t \backslash K_s) \leq C_0 \delta_{\Omega_s}(z_0) \lp 1 + \log^+ \lp \frac{1}{\delta_{\Omega_s}(z_0)} \rp \rp,$
\item[\textup{(iii)}]  there is an $L$-John curve in $\Omega_s$ with tip $x_0$ and base-point $\infty$, and
\item[\textup{(iv)}] $\rho_{\Omega_s}(x_0,z_0) \leq \frac{1}{\beta} \log^+ \lp \frac{\diam_{\Omega_s}(K_t \backslash K_s)}{\delta_{\Omega_s}(z_0)} \rp + C_0.$
\end{enumerate}
Then this chain has a continuous driving term, $\lambda$, with 
$$| \lambda(s) - \lambda(t) | \leq C \sqrt{|s-t|} \lp \log (1/|s-t|) \rp^{1/\beta}$$
for all $|s-t| \leq 1/2$, where $C>0$ depends only on $L$, $\beta$, $C_0$, and $T$.
\end{theorem}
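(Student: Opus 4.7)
My plan is to follow the scheme used for Theorem \ref{Johnprop} and reduce the problem to bounding the Euclidean diameter of the transition hull $K_{s,t} := g_s(K_t \backslash K_s)$. Since the standard Loewner equation theory gives $|\lambda(t) - \lambda(s)| \lesssim \diam(K_{s,t})$ (via Pommerenke's theorem and Lemma \ref{supp}), it suffices to control the latter. Setting $y_0 := g_s(x_0)$, $w_0 := g_s(z_0) \in K_{s,t}$, $D := \diam_{\Omega_s}(K_t \backslash K_s)$, and $\delta := \delta_{\Omega_s}(z_0)$, my goal is to prove the chain of inequalities
$$\diam(K_{s,t}) \lesssim \Im(y_0) \lesssim \sqrt{t-s} \cdot (1 + \log^+(1/\delta))^{1/\beta} \lesssim \sqrt{t-s} \cdot (\log(1/(t-s)))^{1/\beta}.$$

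For the first inequality, condition (i) provides a connected $E \subset \Omega_s$ of Euclidean diameter $\leq C_0 \delta_{\Omega_s}(x_0)$ containing both $x_0$ and $K_t \backslash K_s$. Koebe-type distortion estimates for $f_s = g_s^{-1}$ near $y_0$, using the John cone from condition (iii) to maintain uniform control of $g_s$ on the portion of $E$ lying outside the half-radius Koebe ball around $x_0$, should yield $\diam(g_s(E)) \lesssim |g_s'(x_0)| \delta_{\Omega_s}(x_0) \approx \Im(y_0)$, the last equivalence following from the Koebe $1/4$-theorem identity $|f_s'(y_0)| \Im(y_0) \approx \delta_{\Omega_s}(x_0)$. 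For the second inequality, I would first use the capacity identity $\hcap(K_{s,t}) = 2(t-s)$ together with Lemma \ref{Warea} to conclude $\Im(w_0) \lesssim \sqrt{t-s}$, since the Whitney square containing $w_0$ contributes area $\approx \Im(w_0)^2$ to $\Area_{\mathcal{W}}(K_{s,t})$. Combining conformal invariance of the hyperbolic metric with condition (iv) and $\log(\Im(y_0)/\Im(w_0)) \leq \rho_\Hy(y_0, w_0)$ then gives $\Im(y_0) \lesssim \Im(w_0) (D/\delta)^{1/\beta}$, and condition (ii) replaces $D/\delta$ by $1 + \log^+(1/\delta)$.

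The hard part will be the third inequality, which requires $\log^+(1/\delta) \lesssim \log(1/(t-s))$. I plan to argue by self-consistency. The first inequality combined with the universal lower bound $\diam(K_{s,t}) \geq \rad(K_{s,t}) \geq \sqrt{(t-s)/2}$ from Lemma \ref{supp}(iv) gives $\Im(y_0) \gtrsim \sqrt{t-s}$. Reversing the hyperbolic-distance argument then yields $\Im(w_0) \gtrsim \sqrt{t-s}(1 + \log^+(1/\delta))^{-1/\beta}$. To lower bound $\delta$ I will consider two cases. When $\Im(w_0) \leq 10\diam(K_s)$, Lemma \ref{neardist} gives $\delta \gtrsim \Im(w_0)^2/\diam(K_s)$, producing the implicit inequality
$$\delta \gtrsim \frac{t-s}{\diam(K_s) \cdot (1 + \log^+(1/\delta))^{2/\beta}};$$
when $\Im(w_0) > 10\diam(K_s)$, Lemma \ref{supp}(v) shows $|z_0 - w_0| \leq 3\diam(K_s) < \Im(w_0)/3$, so $\delta \approx \Im(w_0)$ and we instead obtain $\delta \gtrsim \sqrt{t-s}(1 + \log^+(1/\delta))^{-1/\beta}$. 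In either case, taking logarithms and absorbing the sublinear $(2/\beta)\log(1 + \log^+(1/\delta))$ term into the linear $\log^+(1/\delta)$ on the left-hand side yields the desired bound, with constants depending on $T$ through $\diam(K_s) \leq \diam(K_T)$.

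Substituting the resulting estimate into the chain of inequalities produces the modulus $|\lambda(s) - \lambda(t)| \leq C \sqrt{|s-t|}(\log(1/|s-t|))^{1/\beta}$ on the range $|s-t| \leq 1/2$, where the restriction guarantees $\log(1/|s-t|) \geq \log 2 > 0$ so that the implicit constants remain bounded. The most subtle obstacle is the self-referential structure of the third step: the bound on $\delta$ relies on a bound on $\Im(w_0)$ that itself depends on $\delta$, and the four conditions (i)--(iv) must be interleaved delicately to close the loop. The secondary technical point is the distortion estimate in the first inequality, where condition (iii) ensures that $E$ maps cleanly into a hyperbolic ball around $y_0$ of bounded radius even when $C_0$ is too large for $E$ to sit inside the half-radius Koebe ball around $x_0$.
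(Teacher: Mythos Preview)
Your overall strategy---bounding $\diam(K_{s,t})$ and then invoking $|\lambda_s-\lambda_t|\le 4\diam(K_{s,t})$---matches the paper's, and your second inequality and the bootstrap in the third step are sound. The real difficulty is your \emph{first} inequality $\diam(K_{s,t})\lesssim \Im(y_0)$, and the argument you sketch for it does not work.

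The gap is this: the $L$-John curve from condition~(iii) has tip $x_0$ and opens \emph{outward} toward $\infty$; it gives you nothing about how $g_s$ distorts the connected set $E\supset K_t\backslash K_s$, which lies in a ball of radius $C_0\delta_{\Omega_s}(x_0)$ around $x_0$ and may run arbitrarily close to $\partial\Omega_s$. Koebe distortion controls $g_s$ only on $B(x_0,r)$ with $r<\delta_{\Omega_s}(x_0)$; once $C_0\ge 1$ you lose that control entirely, and the John cone (pointing the wrong way) cannot restore it. So ``Koebe + John cone'' does not yield $\diam(g_s(E))\lesssim \Im(y_0)$. Without that inequality your lower bound $\Im(y_0)\gtrsim\sqrt{t-s}$ fails, and then the self-referential loop in the third step never gets started.

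The paper avoids this problem by never trying to control $\diam(g_s(E))$. Instead, for each $w\in K_{s,t}$ it passes to the vertical geodesic $\ell'\subset\Hy$ through $w$, pulls it back to a geodesic $\ell\subset\Omega_s$ meeting $K_t\backslash K_s$, and uses the modulus argument of Lemma~\ref{CarrotRay} (this is where conditions~(i) and~(iii) actually enter) together with condition~(iv) to bound $\rho_{\Omega_s}(z_0,\ell)=\rho_\Hy(w_0,\ell')$. Lemma~\ref{Hy} turns this into a bound on $|\Re(w)-\Re(w_0)|/\Im(w_0)$. Only real-part differences are estimated; the imaginary height is handled trivially by $\sup_w\Im(w)\lesssim\sqrt{t-s}$. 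Finally, the paper replaces $\log^+(1/\delta)$ by $\log^+(1/\Im(w_0))$ directly via Lemma~\ref{neardist} and uses monotonicity of $y\mapsto y(1+\log^+(1/y))^{1/\beta}$ with $\Im(w_0)\lesssim\sqrt{t-s}$; no self-consistency loop is needed.

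If you want to repair your route, the cleanest fix is to drop the claim $\diam(K_{s,t})\lesssim\Im(y_0)$ and instead prove $|\Re(w)-\Re(y_0)|\lesssim\Im(y_0)$ via Lemma~\ref{CarrotRay} (this \emph{does} follow from (i) and (iii)), accept the weaker $\diam(K_{s,t})\lesssim\Im(y_0)+\sqrt{t-s}$, and replace your bootstrap by the direct estimate $\delta\gtrsim\Im(w_0)^2$ from Lemma~\ref{neardist} before using $\Im(w_0)\lesssim\sqrt{t-s}$. At that point the argument collapses into the paper's.
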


\section{Slit half-planes and the Loewner equation} \label{secslit}

A slit half-plane is a domain of the form $\Hy \backslash \gamma(0,\tau]$, where $\gamma \colon [0,\tau] \rightarrow \cl{\Hy}$ is a simple curve in $\cl{\Hy}$ that meets $\R$ only at $\gamma(0)$. After re-parameterizing $\gamma$ by half-plane capacity, one obtains a geometric Loewner chain $K_t = \gamma(0,t]$ for $0 \leq t \leq T$, where $T=\hcap(\gamma)/2$. In fact, this Loewner chain corresponds to a continuous driving function $\lambda$, which can be explicitly described. If $g_t \colon \Hy \backslash K_t \rightarrow \Hy$ is the hydrodynamic map, then $g_t$ extends continuously to the ``tip" $\gamma(t)$ of the slit and $\lambda_t = g_t(\gamma(t))$.

In general, a Loewner chain $K_t$ is said to be \ti{generated by a simple curve} if it arises from a slit half-plane. Such chains therefore correspond to a unique simple curve $\gamma$, parameterized by half-plane capacity, for which $K_t = \gamma(0,t]$ and $\gamma$ intersects $\R$ only at $\gamma(0)$. Moreover, the chain is associated to a continuous driving function. Recall from our earlier discussion that $\SLE_{\kappa}$ chains are, almost surely, generated by a simple curve when $\kappa \leq 4$. On the deterministic side, there are several nice examples of slit half-planes whose driving functions are computable (see, for example, \cite{KNK04, LR14}).

Our primary goal in this section is to prove Theorem \ref{slit}, which tells us that if a slit half-plane is a H\"older domain, then the associated driving function has a modulus of continuity similar to that of Brownian motion. More specifically, let us recall the full statement.

\begin{thmslit}
Let $K_t$ be a geometric Loewner chain, for $0 \leq t \leq T$, that is generated by a simple curve. If $\Hy \backslash K_T$ is a $(\beta,C_0)$-H\"older domain, then this chain has a driving term, $\lambda$, for which
$$|\lambda_s - \lambda_t| \leq C \sqrt{|s-t| \log(1/|s-t|)}$$
if $|s-t| \leq 1/2$. Here, $C>0$ depends only on $\beta$, $C_0$, and $T$.
\end{thmslit}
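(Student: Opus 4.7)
The plan is to reduce the claim to a Euclidean diameter estimate on the transition hull $K_{s,t} := g_s(K_t \setminus K_s)$, and then bound that diameter using the H\"older hypothesis on $\Omega_T$. For the reduction, recall that in the slit setting the driving function is given by the boundary value $\lambda_r = g_r(\gamma(r))$. Using the decomposition $g_t = g_{s,t} \circ g_s$, the image of the tip $w_t := g_s(\gamma(t)) \in \cl{K_{s,t}}$ satisfies $|w_t - \lambda_s| \leq \diam(K_{s,t})$ (both points lie in $\cl{K_{s,t}}$), while Lemma \ref{supp}(v) applied to $g_{s,t}$ gives $|w_t - \lambda_t| \leq 3\rad(K_{s,t})$. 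Consequently $|\lambda_s - \lambda_t| \lesssim \diam(K_{s,t})$, and the theorem is reduced to establishing
\[
\diam(K_{s,t}) \;\lesssim\; \sqrt{(t-s)\log(1/(t-s))}.
\]

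I would bound $\diam(K_{s,t})$ by combining a height bound with a width bound. The \emph{height bound} is immediate from Lemma \ref{Warea} and does not require the H\"older assumption: since $K_{s,t}$ is connected and accumulates at $\lambda_s \in \R$, reaching height $h$ forces the hull to meet at least one Whitney square at every dyadic level $2^j \leq h$, contributing total area $\gtrsim h^2$; together with $\Area_{\mathcal{W}}(K_{s,t}) \approx \hcap(K_{s,t}) = 2(t-s)$, this yields $\max_{w \in K_{s,t}} \Im(w) \lesssim \sqrt{t-s}$. For the \emph{width bound}, the simple curve hypothesis is essential: the tip $\gamma(t)$ is a genuine interior point of $\Omega_s$ satisfying $\delta_{\Omega_s}(\gamma(t)) \leq |\gamma(s) - \gamma(t)| \leq \diam(K_t \setminus K_s)$. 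Applying Lemma \ref{hypext} to the H\"older domain $\Omega_T$, together with the inclusion $\Omega_T \subset \Omega_s$ (which yields $\rho_{\Omega_s} \leq \rho_{\Omega_T}$ on $\Omega_T$), gives quantitative upper bounds on $\rho_{\Omega_s}$ between a high reference point and interior approximations of $\gamma(t)$. Converting this back to $\Hy$ via Lemma \ref{qhmetric} and Lemma \ref{neardist} applied to $f_s \colon \Hy \to \Omega_s$ then bounds the horizontal spread of $K_{s,t}$ from $\lambda_s$; summing the Whitney-area contributions geometrically across the dyadic height scales $2^{-j}\sqrt{t-s}$ produces a single factor of $\log(1/(t-s))^{1/2}$, as claimed.

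The main obstacle I anticipate is the quantitative transfer of H\"older information from $\Omega_T$ to $\Omega_s$. Since $\gamma(t) \in \partial \Omega_T$, Lemma \ref{hypext} must be applied to interior points of $\Omega_T$ approaching $\gamma(t)$ from a single side of the slit curve, and the resulting bounds passed to $\Omega_s$ via the inclusion (the base-point in $\Omega_T$ from Lemma \ref{hypgrowth} must also be shown to have an analogue in $\Omega_s$ with controlled geometry). A secondary challenge is obtaining the optimal exponent $1/2$ on the logarithm rather than the $1/\beta$ appearing in Theorem \ref{nonslit}: this improvement relies crucially on the simple curve hypothesis, which provides a canonical single target point $\gamma(t)$ and permits a one-shot geodesic argument in place of the iterative $L$-John machinery needed in the non-slit setting.
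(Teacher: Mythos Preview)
Your reduction to a bound on $\diam(K_{s,t})$ and your height estimate are exactly as in the paper. The width argument, however, has a genuine gap in the transfer step.

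You propose to control $\rho_{\Omega_s}$ via the monotonicity $\rho_{\Omega_s}\le\rho_{\Omega_T}$ and then push forward by $g_s$. But $g_s$ is an isometry from $(\Omega_s,\rho_{\Omega_s})$ onto $(\Hy,\rho_{\Hy})$, \emph{not} onto $(\Hy\setminus K_{s,t},\rho_{\Hy\setminus K_{s,t}})$. The Whitney--Cauchy--Schwarz argument that produces the factor $\sqrt{\log}$ (the content of Lemma~\ref{Holddiam}) requires a hyperbolic geodesic in $\Hy\setminus K_{s,t}$: one chooses a point $z_1$ trapped ``under'' the slit $K_{s,t}$ near $\lambda_s$, and the geodesic from a high base-point to $z_1$ is forced to cross every vertical Whitney rectangle $\hat Q_i$ sitting over the hull, so the number $n$ of such columns is bounded by the hyperbolic length of that geodesic. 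A geodesic for $\rho_{\Hy}$ is just a Euclidean semicircle, can pass straight through $K_{s,t}$, and yields no column count. Thus the metric you actually need to control is $\rho_{\Hy\setminus K_{s,t}}$, which under $g_s$ corresponds to $\rho_{\Omega_t}$, not $\rho_{\Omega_s}$; and the control must be of Lemma~\ref{hypext} type (in terms of $\length(\ell[x,z_1])$ along a geodesic of that domain), which does not follow from a mere pointwise inequality of hyperbolic densities.

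The paper closes this gap in two modular steps that replace your direct transfer. First (Lemma~\ref{dense}), it uses that $\Omega_T$ is \emph{dense} in $\Omega_t$---this is precisely where the simple-curve hypothesis enters---together with Lemma~\ref{hypext} to show that each $\Omega_t$ is itself a $(\beta,C_1)$-H\"older domain with uniform constants. Second (Lemma~\ref{subinv}), it shows the sub-invariance: $g_s(\Omega_t)=\Hy\setminus K_{s,t}$ is a $(\beta/2,C_2)$-H\"older domain, using exactly the distortion estimate Lemma~\ref{neardist} you cite to get $\delta_{\Omega_t}(z)\gtrsim\delta_{\Hy\setminus K_{s,t}}(g_s(z))^{2}$; the squaring here is why $\beta$ drops to $\beta/2$. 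Only then does one apply Lemma~\ref{Holddiam} directly to $K_{s,t}$ in $\Hy$. Your ingredients (Lemma~\ref{hypext}, Lemma~\ref{neardist}, a Whitney area count) are the right ones, but Lemma~\ref{neardist} must be used to compare $\delta_{\Omega_t}$ with $\delta_{\Hy\setminus K_{s,t}}\!\circ g_s$ inside the sub-invariance step, not as a post-hoc conversion of a bound on $\rho_{\Omega_s}$. Once you redirect the transfer toward $\Omega_t$ and establish that $\Hy\setminus K_{s,t}$ is H\"older, your outline and the paper's proof coincide.
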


We will regularly use the notation $\Omega_t = \Hy \backslash K_t = \Hy \backslash \gamma(0,t]$ for the domains corresponding to the Loewner chain. Moreover, $f_t \colon \Hy \rightarrow \Omega_t$ and $g_t \colon \Omega_t \rightarrow \Hy$ will denote the hydrodynamically normalized conformal maps (which are inverses of each other). The proof of Theorem \ref{slit} consists essentially of three steps.
\begin{enumerate}
\item[(i)] Restriction property: Each $\Omega_t$ is a $(\beta,C_1)$-H\"older domain, with $C_1$ depending only on $\beta$, $C_0$, and $T$.
\item[(ii)] Sub-invariance property: For each $0 \leq s < t \leq T$, the domain $g_s(\Omega_t) \subset \Hy$ is a $(\beta/2,C_2)$-H\"older domain, with $C_2$ depending only on $\beta$, $C_0$, and $T$.
\item[(iii)] Half-plane capacity estimate: There is good control on the diameter of the transition hull $K_{s,t} = g_s(K_t \backslash K_s)$ in terms of the total area of Whitney squares that intersect it, or equivalently, in terms of $\hcap(K_{s,t}) = 2(t-s)$.
\end{enumerate}
It is a standard fact that the driving function of a Loewner chain satisfies the bound $|\lambda_s - \lambda_t | \lesssim \diam(K_{s,t})$, cf. \cite[Lemma 4.1]{LR12}. This, along with the estimate in (iii), will give the desired result.

We should remark that (ii) is an example of the so-called ``sub-invariance principle." Namely, if $g \colon D \rightarrow D'$ is a conformal map with $D'$ a disk or a half-plane, and if $E \subset D$ is nice, then $g(E)$ is also nice. See \cite[p. 51]{GeHag} and \cite[Section 6]{Hein89} for further discussion of the sub-invariance principle, where it is applied to quasi-disks and to John domains.

Steps (i) and (ii) will follow almost immediately from more general properties of H\"older domains that are contained in larger domains. We establish these as two separate lemmas. To set them up, let $\Omega = \Hy \backslash K$ and $\Omega' = \Hy \backslash K'$ be domains corresponding to hulls $K$ and $K'$, with $\emptyset \neq K' \subset K$, so that $\Omega \subset \Omega'$.

\begin{lemma} \label{dense}
Suppose that $\Omega$ is a $(\beta,C_0)$-H\"older domain and is dense in $\Omega'$. Then $\Omega'$ is a $(\beta,C)$-H\"older domain, where $C>0$ depends only on $\beta$, $C_0$, and $\diam(K)$.
\end{lemma}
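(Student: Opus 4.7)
The plan is to verify the characterizing hyperbolic growth condition for $\Omega'$ given by Lemma~\ref{derivative}, starting from the analogous condition that Lemma~\ref{hypgrowth} already delivers for $\Omega$. Applying Lemma~\ref{hypgrowth} to $\Omega$ produces a base-point $z_0 \in \Omega$ with $\Im(z_0) \geq 100\diam(K)$ and $\dist(z_0,K) \leq 107\diam(K)$, together with a constant $C>0$ depending only on $\beta$ and $C_0$, such that
$$\rho_\Omega(z_0,z) \leq \tfrac{1}{\beta}\log\!\left(M/\delta_\Omega(z)\right) + C$$
on hyperbolic geodesics in $\Omega$ from $z_0$ to $D_K$, where $M = \max\{\diam(K)^{\beta},\diam(K)\}$. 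Since $K'\subset K$, the same $z_0$ satisfies the location hypotheses of Lemma~\ref{derivative} for $\Omega'$ with $R=10\diam(K)$, so it will suffice to establish
$$\rho_{\Omega'}(z_0,z) \leq \tfrac{1}{\beta}\log\!\left(1/\delta_{\Omega'}(z)\right) + C'$$
for every $z\in\Omega'$ with $\dist(z,K') \leq 10\diam(K')$, with $C'$ allowed to depend on $\beta$, $C_0$, and $\diam(K)$.

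The first ingredient is the monotonicity of the hyperbolic metric: because $\Omega\subset\Omega'$, one has $\rho_{\Omega'}(z_0,z) \leq \rho_\Omega(z_0,z)$ for every $z\in\Omega$. Whenever $z$ is not close to the ``extra'' boundary $K\backslash K'$, the distances $\delta_\Omega(z)$ and $\delta_{\Omega'}(z)$ are comparable and the desired growth bound follows directly from the estimate on $\rho_\Omega$. The real issue is the case in which $z$ is close to (or lies on) $K\backslash K'$, so that $\delta_\Omega(z) \ll \delta_{\Omega'}(z)$ and the direct comparison loses too much.

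To handle this case, the plan is to perform a short detour. Given such $z$, the aim is to locate a point $z^* \in \Omega$ with $|z-z^*| \leq \tfrac{1}{2}\delta_{\Omega'}(z)$ and $\delta_\Omega(z^*) \gtrsim \delta_{\Omega'}(z)$, and then to decompose
$$\rho_{\Omega'}(z_0,z) \leq \rho_{\Omega'}(z_0,z^*) + \rho_{\Omega'}(z^*,z).$$
The first term reduces to the easy case applied to $z^*$, where $\delta_\Omega(z^*) \asymp \delta_{\Omega'}(z^*) \asymp \delta_{\Omega'}(z)$. The second term is an absolute constant, since $z^*$ and $z$ both lie in the Euclidean ball $B(z,\delta_{\Omega'}(z)/2) \subset \Omega'$, whose $\rho_{\Omega'}$-diameter is $O(1)$ by \eqref{qhcomp}.

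The hardest step will be producing the detour point $z^*$, and it is here that density of $\Omega$ in $\Omega'$ interacts with the Hölder property of $F$. Density alone yields plenty of points of $\Omega$ near $z$, but not with the quantitative control on $\delta_\Omega(z^*)$ needed above. The plan is to show that if $K\backslash K'$ were so thick in $B(z,\tfrac{1}{2}\delta_{\Omega'}(z))$ that no such $z^*$ existed, then the preimage $F^{-1}(K\backslash K')$ would have to accumulate densely across a definite portion of $\R$ at a controlled scale, forcing $|F'|$ to be large on a macroscopic Whitney region near $\R$ in a way incompatible with the bound $|F'(z)| \leq C_0\max\{\Im(z)^{\beta-1},1\}$ from Lemma~\ref{f'equiv}. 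With $z^*$ produced, the verified growth condition and Lemma~\ref{derivative} conclude that $\Omega'$ is a $(\beta,C')$-Hölder domain for some $C'$ depending only on $\beta$, $C_0$, and $\diam(K)$.
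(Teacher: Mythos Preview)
Your overall strategy---verify the hyperbolic growth bound of Lemma~\ref{derivative} for $\Omega'$ at the base-point $z_0$ supplied by Lemma~\ref{hypgrowth}, using monotonicity $\rho_{\Omega'}\le\rho_\Omega$---is exactly the paper's. The gap is in producing the detour point $z^*$. The contradiction you sketch does not go through as stated: the H\"older bound $|F'(w)|\le C_0\max\{\Im(w)^{\beta-1},1\}$ from Lemma~\ref{f'equiv} is an \emph{upper} bound on $|F'|$, so it cannot by itself be violated by forcing $|F'|$ to be large. What you would actually need is a porosity statement for $\partial\Omega$ (every ball $B(z,r)$ centered near $\partial\Omega$ contains a point $z^*$ with $\delta_\Omega(z^*)\gtrsim r$), and that does not follow from density plus an upper bound on $|F'|$; Koebe only converts the assumption $\delta_\Omega<\epsilon r$ on $B(z,r/2)\cap\Omega$ into another \emph{upper} bound on $|F'|$ on the preimage, not a lower one. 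If such porosity holds for H\"older domains at all, proving it is at least as hard as the lemma itself.

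The paper sidesteps this entirely by replacing your detour-point search with Lemma~\ref{hypext}, the Smith--Stegenga strengthening of Lemma~\ref{hypgrowth}. Density is used only qualitatively, to pick \emph{any} $z_1\in\Omega$ with $|z-z_1|\le\min\{\diam(K),r/4\}$ (no control on $\delta_\Omega(z_1)$ whatsoever). One then takes a point $x$ on the $\Omega$-geodesic $\ell$ from $z_0$ to $z_1$ with $|z-x|=r/2$, so that $\rho_{\Omega'}(z_0,z)\le\rho_\Omega(z_0,x)+2$. The key is that Lemma~\ref{hypext} bounds $\rho_\Omega(z_0,x)$ by $\tfrac{1}{\beta}\log\bigl(1/\length(\ell[x,z_1])\bigr)+C$ rather than by $\tfrac{1}{\beta}\log\bigl(1/\delta_\Omega(x)\bigr)+C$, and $\length(\ell[x,z_1])\ge|x-z_1|\ge r/4$ regardless of how small $\delta_\Omega$ is along $\ell[x,z_1]$. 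In other words, the paper never needs a point of large $\delta_\Omega$ near $z$; it needs only that the remaining geodesic has Euclidean length $\gtrsim r$, which is automatic. This is precisely what Lemma~\ref{hypext} buys over Lemma~\ref{hypgrowth}, and it is the missing ingredient in your plan.
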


\begin{lemma} \label{subinv}
Suppose that $\Omega$ is a $(\beta,C_0)$-H\"older domain and that $g \colon \Omega' \rightarrow \Hy$ is the hydrodynamic conformal map. Then $g(\Omega) \subset \Hy$ is a $(\beta/2,C)$-H\"older domain, where $C>0$ depends only on $\beta$, $C_0$, and $\diam(K)$.
\end{lemma}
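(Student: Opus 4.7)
The plan is to realize $g(\Omega)$ as the image of $\Hy$ under a hydrodynamic conformal map and then verify the derivative characterization from Lemma \ref{f'equiv}. Let $f = g^{-1} \colon \Hy \to \Omega'$ be the hydrodynamic inverse of $g$, and let $h \colon \Hy \to g(\Omega)$ be the hydrodynamic conformal map onto the image. Because both $f$ and $f_\Omega$ are hydrodynamically normalized, so is $f \circ h$, and uniqueness of the hydrodynamic normalization gives $f_\Omega = f \circ h$. Thus it suffices to show $|h'(z)| \lesssim \max\{\Im(z)^{\beta/2 - 1},\, 1\}$ with constant depending only on $\beta$, $C_0$, and $\diam(K)$.

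Writing $w = h(z)$ and applying Koebe's $1/4$-theorem to $h$, to $f$, and to the conformal restriction $f|_{g(\Omega)} \colon g(\Omega) \to \Omega$ yields the three comparabilities
$$|h'(z)|\,\Im(z) \approx \delta_{g(\Omega)}(w), \quad |f'(w)|\,\Im(w) \approx \delta_{\Omega'}(f(w)), \quad |f'(w)|\,\delta_{g(\Omega)}(w) \approx \delta_\Omega(f(w)).$$
The H\"older hypothesis on $\Omega$, combined with Lemma \ref{f'equiv} and Koebe applied to $f_\Omega$, gives the upper bound $\delta_\Omega(f(w)) \lesssim \max\{\Im(z)^\beta,\,\Im(z)\}$. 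The key step, and the one that produces the halving of the H\"older exponent, is a lower bound on $|f'(w)|$ reflecting the square-root behavior of the hydrodynamic map $f$ near the hull $K'$: when $\Im(w) \leq 10\diam(K')$, Lemma \ref{neardist} gives $|f'(w)| \gtrsim \Im(w)/\diam(K')$, and when $\Im(w) > 10\diam(K')$ the far-field estimate \eqref{fardist} gives $|f'(w)| \geq 1/2$. Since $\delta_{g(\Omega)}(w) \leq \Im(w)$ and $\diam(K') \leq \diam(K)$, the third Koebe identity then yields, in the near regime,
$$\delta_{g(\Omega)}(w)^2 \lesssim \diam(K)\max\{\Im(z)^\beta,\,\Im(z)\},$$
and in the far regime $\delta_{g(\Omega)}(w) \lesssim \max\{\Im(z)^\beta,\,\Im(z)\}$.

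Dividing by $\Im(z)$ and splitting into $\Im(z) \leq 1$ versus $\Im(z) > 1$, both bounds combine to give the target estimate $|h'(z)| \lesssim \max\{\Im(z)^{\beta/2 - 1},\,1\}$: for small $\Im(z)$, the square root in the near regime converts the exponent $\beta$ into $\beta/2$, while the far regime contributes the stronger exponent $\beta$; for $\Im(z) > 1$ both bounds reduce to an absolute constant allowed to depend on $\diam(K)$. Lemma \ref{f'equiv} then implies that $g(\Omega)$ is $(\beta/2, C)$-H\"older with the stated dependence. The main obstacle is really just the invocation of Lemma \ref{neardist}: the sharp estimate $|f'(w)| \gtrsim \Im(w)/\diam(K')$ near $\R$ is exactly what encodes the square-root behavior of the hydrodynamic map, and without it the exponent would not drop to precisely $\beta/2$. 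The remaining work is bookkeeping — tracking $\diam(K') \leq \diam(K)$ in the constants, and dispatching the edge cases $K' = \emptyset$ (so $g = \id$ and the claim follows from $\Im(z)^\beta \leq \Im(z)^{\beta/2}$ on the unit scale) and $K = K'$ (so $g(\Omega) = \Hy$ is trivially H\"older).
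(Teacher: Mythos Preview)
Your argument is correct, and it takes a genuinely different route from the paper's. Both proofs ultimately rest on the same key input, Lemma~\ref{neardist}, which supplies the square-root distortion $\delta_{g(\Omega)}(w)^2 \lesssim \diam(K)\,\delta_\Omega(f(w))$ responsible for halving the H\"older exponent. The difference is in how the H\"older condition is transported. The paper passes through the hyperbolic-metric characterization: it invokes Lemma~\ref{hypgrowth} to convert the H\"older hypothesis on $\Omega$ into a growth bound on $\rho_\Omega$, uses conformal invariance $\rho_{g(\Omega)}(w_0,w)=\rho_\Omega(z_0,z)$ to push this to $g(\Omega)$, inserts the squaring estimate, and then applies Lemma~\ref{derivative} to recover the H\"older property. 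You instead stay with the derivative characterization of Lemma~\ref{f'equiv} throughout: factor $f_\Omega = f\circ h$, feed the known bound on $|f_\Omega'|$ and the lower bound on $|f'|$ from Lemma~\ref{neardist} into the chain rule and Koebe, and read off $|h'(z)|\lesssim \max\{\Im(z)^{\beta/2-1},1\}$ directly.

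Your approach is shorter and avoids the base-point bookkeeping in Lemmas~\ref{hypgrowth} and \ref{derivative}; it also makes the dependence on $\diam(K)$ slightly more transparent. The paper's route, on the other hand, is consistent with the surrounding development---in particular with the proof of Lemma~\ref{dense}, where the hyperbolic characterization (and its Smith--Stegenga strengthening, Lemma~\ref{hypext}) is genuinely needed and a direct derivative argument would not work. One minor remark: the edge case $K'=\emptyset$ is excluded by the standing hypothesis $\emptyset\neq K'\subset K$ preceding the lemma, so you need not treat it separately.
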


\begin{proof}[Proof of Lemma \ref{dense}]
By hypothesis, $\Omega$ is a $(\beta, C_0)$ H\"older domain. Let $z_0 \in \Omega \subset \Omega'$ be the point coming from Lemma \ref{hypgrowth}. Recall that this point can be chosen so that $\dist(z_0,K) \leq 200\diam(K)$ and $\Im(z_0) \geq 100\diam(K)$. In particular, $\dist(z_0,K') \leq 300\diam(K)$ and $\Im(z_0) \geq 10\diam(K)$.

Fix a point $z \in \Omega'$ with $\dist(z,K') \leq 10\diam(K)$. We wish to establish appropriate bounds on $\rho_{\Omega'}(z_0,z)$. To this end, let $r = \delta_{\Omega'}(z) >0$ denote the distance from $z$ to $\partial \Omega'$. Suppose first that $|z-z_0| \geq r/2$. As $\Omega$ is dense in $\Omega'$, we can find $z_1 \in \Omega$ for which
$$|z-z_1| \leq \min \{ \diam(K), r/4 \}.$$
Observe, in particular, that
$$\dist(z_1,K) \leq \dist(z,K) + |z-z_1| \leq 10\diam(K) + \diam(K) \leq 100 \diam(K),$$
so $z_1 \in D_K$. Let $\ell$ denote the hyperbolic geodesic from $z_0$ to $z_1$ in $\Omega$, and let $x \in \ell$ be a point on $\ell$ with $|z-x| = r/2$. We then have
\begin{equation} \label{Omega'}
\rho_{\Omega'}(z_0,z) \leq \rho_{\Omega'}(z_0,x) + \rho_{\Omega'}(x,z) \leq \rho_{\Omega}(z_0,x) + 2,
\end{equation}
where the second inequality uses two ingredients: the fact that $x \in B(z,r/2)$ implies $\rho_{\Omega'}(x,z) \leq 2$, and the fact that $\Omega \subset \Omega'$ implies $\rho_{\Omega'} \leq \rho_{\Omega}$ by the Schwarz lemma.

As $\Omega$ is a $(\beta,C_0)$-H\"older domain and $z_1 \in D_K$, Lemma \ref{hypext} gives the bound
$$\rho_{\Omega}(z_0,x) \leq \frac{1}{\beta} \log \lp \frac{1}{\length(\ell[x,z_1])} \rp +C,$$
with $C$ depending only on $\beta$, $C_0$, and $\diam(K)$. Here, we have incorporated the quantity $\max \{ \diam(K)^{\beta}, \diam(K) \}$ into the constant $C$. This estimate, along with the bound in \eqref{Omega'} and the fact that $\length(\ell[x,z_1]) \geq r/4 = \delta_{\Omega'}(z)/4$, gives
$$\rho_{\Omega'}(z_0,z) \leq \frac{1}{\beta} \log \lp \frac{1}{\delta_{\Omega'}(z)} \rp + C',$$
with $C'$ depending only on $\beta$, $C_0$, and $\diam(K)$. 

If $|z-z_0| < r/2$ then the same inequality holds trivially, provided that $C'$ is large enough. Thus, $\Omega'$ with base-point $z_0$ satisfies the hypotheses in Lemma \ref{derivative} with $R=10\diam(K) \geq 10\diam(K')$. Consequently, $\Omega'$ is a $(\beta,C'')$-H\"older domain with $C''$ depending only on $\beta$, $C_0$, and $\diam(K)$.
\end{proof}

\begin{proof}[Proof of Lemma \ref{subinv}]
Again, by hypothesis, $\Omega$ is a $(\beta, C_0)$ H\"older domain. Let $z_0 \in \Omega$ be the point coming from Lemma \ref{hypgrowth} with $\dist(z_0,K) \leq 200\diam(K)$ and $\Im(z_0) \geq 100\diam(K)$. Recall from Lemma \ref{supp}(v) that $|g(z) - z| \leq 3\diam(K') \leq 3\diam(K)$ for all $z \in \Omega'$. In particular, if we define
$$\hat{K} = g(K \backslash K') = \Hy \backslash g(\Omega),$$
then $\hat{K}$ is contained in the $3\diam(K)$-neighborhood of $K$. Note that, as a result, $\diam(\hat{K}) \leq 7\diam(K)$. 

Let $w_0 = g(z_0)$ so that $\Im(w_0) \geq 90\diam(K)$ and $\dist(w_0,\hat{K}) \leq 300\diam(K)$. Our goal is to apply Lemma \ref{derivative} to the domain $g(\Omega) = \Hy \backslash \hat{K}$ with base-point $w_0$ and $R=70\diam(K) \geq 10\diam(\hat{K})$. To this end, fix $w \in g(\Omega)$ with $\dist(w,\hat{K}) \leq 10\diam(\hat{K})$. If $z = g^{-1}(w) \in \Omega$, then using Lemma \ref{supp}(v) once again, we have
$$\begin{aligned}
\dist(z,K) &\leq 3\diam(K') + \dist(w,K) \leq 6\diam(K) + \dist(w,\hat{K}) \\
&\leq 6\diam(K) + 70\diam(K) \leq 100\diam(K).
\end{aligned}$$
Thus, $z \in D_K$, so the H\"older property for $\Omega$ guarantees that
$$\rho_{g(\Omega)}(w_0,w) = \rho_{\Omega}(z_0,z) \leq \frac{1}{\beta} \log \lp \frac{1}{\delta_{\Omega}(z)} \rp + C,$$
where $C$ depends only on $\beta$, $C_0$, and $\diam(K)$.

We now apply Lemma \ref{neardist} to the map $f = g^{-1} \colon \Hy \rightarrow \Hy \backslash K'$ and the point $w \in \Hy$. More specifically, if $\Im(w) \leq 10\diam(K')$, then this lemma gives $\Im(w) \lesssim |f'(w)|$, with implicit constant depending only on $\diam(K')$, which is bounded above by $\diam(K)$. Otherwise, if $\Im(w) > 10\diam(K')$, then $|f'(w)| \approx 1$; using that $\Im(w) \leq 100\diam(K)$, we again have $\Im(w) \lesssim |f'(w)|$ with constant depending only on $\diam(K)$. Thus, in either case, the Koebe 1/4-theorem, implies that
$$\delta_{\Omega}(z) \geq \frac{1}{4}|f'(w)| \delta_{g(\Omega)}(w) \gtrsim \Im(w) \delta_{g(\Omega)}(w) \geq \delta_{g(\Omega)}(w)^2.$$
Inserting this into the previous inequality gives
$$\rho_{g(\Omega)}(w_0,w) \leq \frac{2}{\beta} \log \lp \frac{1}{\delta_{g(\Omega)}(w)} \rp + C',$$
where $C'$ depends only on $\beta$, $C_0$, and $\diam(K)$. Invoking Lemma \ref{derivative}, we conclude that $g(\Omega)$ is a $(\beta/2,C'')$-H\"older domain, where $C''$ depends only on $\beta$, $C_0$, and $\diam(K)$.
\end{proof}

The core of the proof of Theorem \ref{slit} consists in the following lemma, which controls the diameter of a simple curve in $\Hy$ whose complement is a H\"older domain. To put it in a clearer context, we recall that for any half-plane hull, $K$, the bound $\hcap(K) \lesssim \diam(K)^2$ holds, with an absolute constant. The lemma tells us that this is almost an equivalence (up to a log term) when $\Hy \backslash K$ has the H\"older property.

\begin{lemma} \label{Holddiam}
Let $K$ be a simple curve in $\cl{\Hy}$, intersecting $\R$ in exactly one point. If $\Hy \backslash K$ is a $(\beta,C_0)$-H\"older domain, then
$$\diam(K) \lesssim \sqrt{\hcap(K) \lp 1 + \log^+ \frac{1}{\diam(K)} \rp},$$
where $\log^+(x) = \max\{0,\log(x)\}$ and the implicit constant depends on $\beta$ and $C_0$.
\end{lemma}

\begin{proof}
Without loss of generality, we may assume that $K$ meets $\R$ at the point 0. As a first case, observe that if $\sup \{\Im(z) : z \in K\} \geq \sup \{|\Re(z)| : z \in K\}$, then we immediately have
$$\diam(K) \leq 4 \sup \{\Im(z) : z \in K\} \lesssim \sqrt{\hcap(K)}.$$
The last inequality here follows from Lemma \ref{Warea}, which says that $\hcap(K)$ is comparable to the total area of Whitney squares that intersect $K$. Thus, we may assume that $\sup \{\Im(z) : z \in K\} \leq \sup \{|\Re(z)| : z \in K\}$, so in particular, $\diam(K) \leq 4 \sup \{|\Re(z)| : z \in K\}$.

Let $\mathcal{W}$ denote the standard Whitney decomposition of $\Hy$, and let $\mathcal{C} \subset \mathcal{W}$ be the collection of all squares in $\mathcal{W}$ that intersect $K$ but have no ancestors (i.e., squares above it) that intersect $K$. For $Q \in \mathcal{C}$, let $\hat{Q}$ be the rectangle made up of $Q$ and all of its descendants:
$$\hat{Q} = \{z \in \Hy : z + iy \in Q \text{ for some } y \geq 0 \}.$$
Note that the collection $\{\hat{Q}: Q \in \mathcal{C} \}$ consists of rectangles with pairwise disjoint interiors, and the union of these rectangles contains $K$.

Let $w_0 \in K$ be a point with
$$\sup \{|\Re(z)| : z \in K\} \leq 2 |\Re(w_0)|,$$
and for which there is a square $Q_0 \in \mathcal{C}$ that contains $w_0$. For notational ease we will assume that $\Re(w_0) > 0$; the argument works in the same way if $\Re(w_0) < 0$. For $i \geq 1$, inductively define $Q_i$ to be the (unique) square in $\mathcal{C}$ for which
$$\max \{\Re(z) : z \in Q_i \} = \min\{ \Re(z) : z \in Q_{i-1} \}.$$
We do this for $i \leq n$, where $n$ is the first time at which 
$$\min \{ \Re(z) : z \in Q_n \} \leq \Re(w_0)/2.$$
Notice that for each $1\leq i \leq n$, the right edge of $\hat{Q}_i$ intersects the left edge of $\hat{Q}_{i-1}$. Moreover, the choice of $n$ ensures that $\max \{ \Re(z) : z \in Q_n \} \geq \Re(w_0)/2$.

Let $z_0 \in \Hy \backslash K$ and $C >0$ be the point and the constant given by Lemma \ref{hypgrowth}, so $C$ depends only on $\beta$ and $C_0$. Recall also that we may take
$$50\diam(K) \leq \delta_{\Hy \backslash K}(z_0) \leq \Im(z_0) \leq 200\diam(K).$$
Then choose $z_1 \in \Hy \backslash K$ with $0 < \Re(z_1) < \Re(w_0)/4$ very small, and for which $\Im(z_1)$ is small enough that the vertical segment from $z_1$ to $\R$ does not meet $K$. This is possible because $K$ is assumed to be a simple curve meeting $\R$ only at the point 0. Let $\ell$ be the hyperbolic geodesic in $\Hy \backslash K$ from $z_0$ to $z_1$.

The choice of $z_1$, along with the fact that $K$ is a simple curve and $z_0$ lies far above $K$, implies that $\ell$ must cross each of the rectangles $\hat{Q}_i$. More specifically, there exist points $x_i \in \ell$ that lie on the right edge of $\hat{Q}_i$ for each $1 \leq i \leq n$. Observe that the hyperbolic length of $\ell[x_i,x_{i+1}]$ is at least $1/4$. Indeed, this segment contains a sub-arc lying entirely in $\hat{Q}_i$ of length at least the side length of $Q_i$. Using Lemma \ref{hypext}, we have
$$\frac{n}{4} \leq \rho_{\Hy \backslash K}(z_0,x_n) \leq \frac{1}{\beta} \log \lp \frac{\max \{ \diam(K)^{\beta}, \diam(K) \}}{\length(\ell[x_n,z_1])} \rp+ C',$$
where $C' >0$ depends only on $\beta$ and $C_0$. Noting that $|x_n - z_1| \geq \Re(w_0)/4 \geq \diam(K)/40$, we obtain 
$$n \lesssim 1 + \log^+ \lp \frac{\max \{ \diam(K)^{\beta}, \diam(K) \}}{\diam(K)} \rp \lesssim 1+ \log^+ \lp \frac{1}{\diam(K)} \rp,$$
with constant depending on $\beta$ and $C_0$. By the Cauchy-Schwarz inequality, we have
$$\begin{aligned}
\diam(K) \lesssim \Re(w_0) &\lesssim \sum_{i=0}^n \diam(Q_i) \leq (n+1)^{1/2} \lp \sum_{i=0}^n \diam(Q_i)^2 \rp^{1/2}  \\
&\lesssim \sqrt{\hcap(K) \lp 1 + \log^+ \lp 1/\diam(K) \rp \rp},
\end{aligned}$$
where the final inequality is a consequence of Lemma \ref{Warea}.
\end{proof}

Putting together the three preceding lemmas, we are able to prove Theorem \ref{slit} without much difficulty. 

\begin{proof}[\textbf{Proof of Theorem \ref{slit}}]
By assumption, the Loewner chain $K_t$ is generated by a simple curve. Let $\gamma \colon [0,T] \rightarrow \cl{\Hy}$ be this curve, parameterized by half-plane capacity, so that $\gamma(0) \in \R$ and $K_t = \gamma(0,t]$ for each $t$. Let $\Omega_t = \Hy \backslash K_t$ denote the corresponding domains. By assumption, $\Omega_T$ is a $(\beta,C_0)$-H\"older domain.

Let us first make a remark about the dependence of the implicit constant on the parameter $T$. In a few places, we will need to allow certain constants to depend on $\diam(K_t)$, which is of course bounded from above by $\diam(K_T)$. Moreover, as $\Omega_T$ is a $(\beta,C_0)$-H\"older domain, Lemma \ref{Holddiam} implies that 
$$\diam(K_T) \lesssim \sqrt{ T \lp 1+ \log^+(1/\diam(K_T)) \rp} \lesssim \sqrt{ T \lp 1+ \log^+(1/T) \rp},$$
with implicit constants depending only on $\beta$ and $C_0$. Here, we have used that $\hcap(K_T) =2T$ and $\diam(K_T) \gtrsim \hcap(K_T)^{1/2}$; this latter inequality follows immediately from Lemma \ref{Warea}. Thus, $\diam(K_T)$ is bounded by an increasing function of $T$, so dependence on $\beta$, $C_0$, and $\diam(K_T)$ can be replaced by dependence on $\beta$, $C_0$, and $T$. 

We now proceed to the proof. Fix $0 \leq s < t \leq T$. As $\gamma$ is a simple curve, we know that $\Omega_T \subset \Omega_t$ is dense in $\Omega_t$. Thus, Lemma \ref{dense} implies that $\Omega_t$ is a $(\beta,C_1)$-H\"older domain with $C_1>0$ depending only on $\beta$, $C_0$, and $T$.

Let $g_s \colon \Omega_s \rightarrow \Hy$ be the hydrodynamic conformal map. Lemma \ref{subinv} guarantees that $g_s(\Omega_t)$ is a $(\beta/2,C_2)$-H\"older domain, where $C_2>0$ also depends only on $\beta$, $C_0$, and $T$. The corresponding transition hull for this domain,
$$K_{s,t} = g_s(K_t \backslash K_s) = \Hy \backslash g_s(\Omega_t),$$
is a simple curve that intersects $\R$ in exactly one point. Lemma \ref{Holddiam}, along with the facts that $\hcap(K_{s,t}) = 2(t-s)$ and $\diam(K_{s,t}) \gtrsim \hcap(K_{s,t})^{1/2}$, therefore gives
\begin{equation} \label{good}
\diam(K_{s,t}) \lesssim \sqrt{|s-t| \lp 1+ \log^+ (1/|s-t|) \rp},
\end{equation}
with implicit constant again depending only on $\beta$, $C_0$, and $T$.

It is a general fact that Loewner chains generated by a simple curve have continuous driving functions (cf. \cite[Chapter 4, Section 1]{Law05}). Alternatively, we could note that the right-hand side of \eqref{good} goes to 0 as $|s-t|$ goes to zero, so Pommerenke's theorem ensures that the chain $K_t$ has a continuous driving term. Regardless, let $\lambda$ denote the driving function. It is a standard fact (cf. \cite[Lemma 4.1]{LR12}) that $|\lambda_s - \lambda_t| \leq 4\diam(K_{s,t})$,
and invoking \eqref{good} once more, we obtain
$$|\lambda_s - \lambda_t| \lesssim \sqrt{|s-t| \log(1/|s-t|)}$$
for $|s-t|$ small enough; certainly $|s-t| \leq 1/2$ works.
\end{proof}

\section{Non-slit half-planes} \label{secnonslit}

In the previous section, we established that geometric Loewner chains corresponding to slit half-planes with the H\"older property have driving terms with regularity similar to that of Brownian motion. In an informal sense, we see this as a type of converse to the fact that $\SLE_{\kappa}$ domains are H\"older domains, for $\kappa < 4$. We mentioned earlier that when $\kappa > 4$, the $\SLE_{\kappa}$ domains are still H\"older domains, even though the corresponding curves are not simple. A natural question, then, is the following.

\begin{question} \label{q}
Is there a result that is, to the regime $\kappa > 4$, analogous to what Theorem \ref{slit} is to the regime $0 <\kappa<4$?
\end{question}

A first attempt toward such a statement might ask for a direct analogy: is Theorem \ref{slit} true (possibly with a worse bound on the modulus of continuity for $\lambda$) without the assumption that $K_T$ is a simple curve? Perhaps it is true if the ``simple curve" assumption is replaced by the hypothesis that each $\Hy \backslash K_t$ is a $(\beta,C_0)$-H\"older domain? Unfortunately, these questions do not have positive answers: it is possible for the hulls to grow in such a way that their complementary domains have nicely-accessible boundaries (say, are uniformly John domains) but $K_t \backslash K_s$ lies very close to $\partial (\Hy \backslash K_s)$ for a relatively large distance. One can construct such chains, for example, using space-filling curves.

Our main goal in this section is to prove Theorem \ref{nonslit}, which is a partial answer to Question \ref{q}. The conditions we impose on the geometric Loewner chain are chosen precisely to prevent the growth of $K_t \backslash K_s$ ``along" the boundary of $\Hy \backslash K_s$. Namely, our hypotheses ensure that $K_t \backslash K_s$ does not lie \ti{too close} to $\partial(\Hy \backslash K_s)$ for \ti{too large} a distance. In making these conditions precise, we are motivated by the geometry of H\"older domains (or rather, localized H\"older-type growth for the hyperbolic metric) rather than the H\"older condition itself. Let us restate the result here for convenience. 

\begin{thmnonslit}
Let $K_t$ be a geometric Loewner chain, and let $\Omega_t = \Hy \backslash K_t$ be the complementary domains. Suppose that, for each $0 \leq s < t \leq T$, there are points $x_0 \in \Omega_s$ and $z_0 \in K_t \backslash K_s$ such that
\begin{enumerate}
\item[\textup{(i)}] $\diam_{\Omega_s}(\{x_0\} \cup K_t \backslash K_s) \leq C_0 \delta_{\Omega_s}(x_0)$,
\item[\textup{(ii)}] $\diam_{\Omega_s}(K_t \backslash K_s) \leq C_0 \delta_{\Omega_s}(z_0) \lp 1 + \log^+ \lp \frac{1}{\delta_{\Omega_s}(z_0)} \rp \rp,$
\item[\textup{(iii)}] there is an $L$-John curve in $\Omega_s$ with tip $x_0$ and base-point $\infty$, and
\item[\textup{(iv)}] $\rho_{\Omega_s}(x_0,z_0) \leq \frac{1}{\beta} \log^+ \lp \frac{\diam_{\Omega_s}(K_t \backslash K_s)}{\delta_{\Omega_s}(z_0)} \rp + C_0.$
\end{enumerate}
Then this chain has a driving term, $\lambda$, with 
$$| \lambda(s) - \lambda(t) | \leq C \sqrt{|s-t|} \lp \log (1/|s-t|) \rp^{1/\beta}$$
for all $|s-t| \leq 1/2$, where $C>0$ depends only on $L$, $\beta$, $C_0$, and $T$.
\end{thmnonslit}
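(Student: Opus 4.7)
The plan is to follow the three-step template of Theorem \ref{slit}: estimate the diameter of the transition hull $K_{s,t}=g_s(K_t \backslash K_s)$ and then apply the standard bound $|\lambda_s-\lambda_t| \leq 4\diam(K_{s,t})$ together with $\hcap(K_{s,t})=2(t-s)$. Fix $0 \leq s<t \leq T$ with $t-s \leq 1/2$, and write $w_0=g_s(x_0)$ and $v_0=g_s(z_0) \in K_{s,t}$. The target estimate is $\diam(K_{s,t}) \lesssim \sqrt{t-s}\,(\log(1/(t-s)))^{1/\beta}$.

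The first step uses conditions (i) and (iii) to position $w_0$ relative to $K_{s,t}$. Pushing the $L$-John curve $\alpha$ from (iii) through $g_s$, I argue that the image $g_s(\alpha) \subset \Hy \backslash K_{s,t}$ inherits an $L'$-John property with $L'=L'(L)$; this is a local Koebe-distortion argument applied to the ball-chain defining $\alpha$, in the spirit of the sub-invariance proof of Lemma \ref{subinv}. This forces $\Im(w_0) \gtrsim \rad(K_{s,t})$, since otherwise any curve from $\infty$ to $w_0$ avoiding $K_{s,t}$ would have to squeeze past the hull in a manner incompatible with the John condition. Condition (i), pushed through $g_s$ by Gehring--Hayman (internal distances are preserved by conformal maps up to bounded multiplicative factors), gives $\diam_\Hy(\{w_0\}\cup K_{s,t}) \lesssim \Im(w_0)$. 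Together these yield $\Im(w_0) \approx \diam(K_{s,t})$ and $|w_0-v_0| \gtrsim \diam(K_{s,t})$.

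The second step uses conditions (ii) and (iv) to bound $|w_0-v_0|$ from above. Conformal invariance of the hyperbolic metric under $g_s\colon \Omega_s \to \Hy$ gives $\rho_\Hy(w_0, v_0)=\rho_{\Omega_s}(x_0,z_0)$, so by (iv),
\[
\rho_\Hy(w_0, v_0) \leq \frac{1}{\beta}\log^+\!\left(\frac{\diam_{\Omega_s}(K_t \backslash K_s)}{\delta_{\Omega_s}(z_0)}\right) + C_0,
\]
and condition (ii) then upgrades this to $e^{\rho_\Hy(w_0,v_0)} \lesssim (1+\log^+(1/\delta_{\Omega_s}(z_0)))^{1/\beta}$. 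The explicit identity $\cosh \rho_\Hy(w,v)=1+|w-v|^2/(2\Im w\,\Im v)$ yields $|w_0-v_0|^2 \lesssim \Im(w_0)\,\Im(v_0)\,e^{\rho_\Hy(w_0,v_0)}$. Combining with the lower bound $|w_0-v_0|\gtrsim \Im(w_0) \approx \diam(K_{s,t})$ from the first step gives $\diam(K_{s,t}) \lesssim \Im(v_0)(1+\log^+(1/\delta_{\Omega_s}(z_0)))^{1/\beta}$. A Koebe-based lower bound $\delta_{\Omega_s}(z_0) \gtrsim \Im(v_0)^2/\diam(K_T)$ (from Lemma \ref{neardist} applied to $f_s=g_s^{-1}$) converts the right side to $\lesssim \Im(v_0)(1+\log^+(1/\Im(v_0)))^{1/\beta}$, up to $T$-dependent constants. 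Finally $\Im(v_0) \lesssim \sqrt{t-s}$ (Lemma \ref{Warea} applied to the Whitney square containing $v_0$), and monotonicity of $h \mapsto h(1+\log^+(1/h))^{1/\beta}$ for small $h$ delivers the bound $\diam(K_{s,t}) \lesssim \sqrt{t-s}\,(\log(1/(t-s)))^{1/\beta}$.

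The main obstacle is the John-curve sub-invariance in the first step: unlike Lemma \ref{subinv}, the domain $\Omega_s$ carries no global H\"older hypothesis, so the sub-invariance must be argued locally along $\alpha$ using only the twisted-cone structure and Koebe distortion ball-by-ball, and one must verify carefully that the resulting John curve in $\Hy$ is rigid enough to force $\Im(w_0) \gtrsim \rad(K_{s,t})$. A second delicate point is the simplification $D/\delta \lesssim 1+\log^+(1/\delta)$ coming from condition (ii): it is precisely this step that produces the $(\log(1/(t-s)))^{1/\beta}$ in the final bound rather than a stronger exponent one might naively guess from the raw hyperbolic-distance estimate.
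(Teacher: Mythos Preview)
Your second step is close to the paper's argument, but your first step takes a different route and has real gaps.

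The paper never pushes the John curve or condition (i) forward to $\Hy$. Instead, for each $w\in K_{s,t}$ it pulls the vertical geodesic $\ell'$ through $w$ \emph{back} to a hyperbolic geodesic $\ell=g_s^{-1}(\ell')$ in $\Omega_s$, and then applies Lemma~\ref{CarrotRay} --- a modulus-of-path-families argument using Janiszewski's lemma, built precisely for this purpose --- to get $\rho_{\Omega_s}(x_0,\ell)\lesssim 1$ directly from the John curve at $x_0$ and the internal-diameter bound (i). The point $g_s(x_0)$ never appears; $x_0$ serves only as a hyperbolic waypoint in $\Omega_s$, and the triangle inequality $\rho_{\Omega_s}(z_0,\ell)\le\rho_{\Omega_s}(z_0,x_0)+\rho_{\Omega_s}(x_0,\ell)$ combined with (iv), (ii), and Lemma~\ref{Hy} yields the bound on $|\Re(w)-\Re(g_s(z_0))|/\Im(g_s(z_0))$ for every $w\in K_{s,t}$ at once.

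Your Step 1, by contrast, tries to work in $\Hy$ and has two problems. First, the Gehring--Hayman theorem does not say that internal distances are conformally quasi-invariant, and in fact the conclusion $\diam_{\Hy}(\{w_0\}\cup K_{s,t})\lesssim\Im(w_0)$ does not follow from (i) in the way you suggest: points of $K_t\backslash K_s$ can lie on (or arbitrarily close to) $\partial\Omega_s$, so the internal-diameter bound (i) gives no uniform control on $\rho_{\Omega_s}(x_0,z)$ for $z\in K_t\backslash K_s$, and hence no direct bound on $|g_s(x_0)-g_s(z)|$. Second, even if $g_s(\alpha)$ were an $L'$-John curve in $\Hy$, your argument for $\Im(w_0)\gtrsim\rad(K_{s,t})$ assumes the curve must avoid $K_{s,t}$ --- but $\alpha\subset\Omega_s$ is under no obligation to avoid $K_t\backslash K_s$, so $g_s(\alpha)$ may pass straight through $K_{s,t}$, and the ``squeeze past the hull'' heuristic does not apply. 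The paper's Lemma~\ref{CarrotRay} is exactly the device that makes rigorous what you want from the John curve, and it does so without ever leaving $\Omega_s$; the sub-invariance obstacle you flag simply does not arise.
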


Before taking up the full proof of Theorem \ref{nonslit}, we turn our attention to Theorem \ref{Johnprop}. The conditions we impose in this latter result are similar in spirit to those found in the former result, with the exception that they are motivated by John-type properties instead of H\"older-type properties (and also that they are much simpler!). As we mentioned in Section \ref{intro}, this result gives criteria under which a geometric Loewner chain has a $\Lip(1/2)$ driving term. Let us recall it here as well.

\begin{thmJohnprop}
Let $K_t$ be a Loewner chain, and let $\Omega_t = \Hy \backslash K_t$ be the complementary domains. Suppose that, for each $0 \leq s < t \leq T$, there is a point $z_0 \in K_t \backslash K_s$ for which
\begin{enumerate}
\item[\textup{(i)}] $\diam_{\Omega_s}(K_t \backslash K_s) \leq C_0 \delta_{\Omega_s}(z_0),$ and
\item[\textup{(ii)}] there is an $L$-John curve in $\Omega_s$ with tip $z_0$ and base-point $\infty$.
\end{enumerate}
Then this chain has a driving term, $\lambda$, for which $\| \lambda \|_{1/2} \leq C$, where $C > 0$ depends only on $L$ and $C_0$.
\end{thmJohnprop}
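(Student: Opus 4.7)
The plan is to exploit the standard bound $|\lambda_s - \lambda_t| \leq 4\diam(K_{s,t})$, where $K_{s,t} = g_s(K_t \setminus K_s)$ is the transition hull, to reduce the theorem to establishing the capacity--diameter estimate $\diam(K_{s,t})^2 \lesssim \hcap(K_{s,t}) = 2(t-s)$, with constants depending only on $L$ and $C_0$. By Lemma \ref{Warea} we have $\hcap(K_{s,t}) \approx \Area_{\mathcal{W}}(K_{s,t})$, so the task becomes $\diam(K_{s,t})^2 \lesssim \Area_{\mathcal{W}}(K_{s,t})$.

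I would attack this by a Whitney-chain argument in $\Hy$, parallel to the one used in the proof of Lemma \ref{Holddiam} but \emph{without} the logarithmic correction that arose there from the weaker H\"older hypothesis. As in that proof, after reducing to the case where the horizontal extent of $K_{s,t}$ dominates its vertical extent, one chooses a chain $Q_0, Q_1, \ldots, Q_n$ of top-level Whitney squares in $\Hy$ that meet $K_{s,t}$ and cover its horizontal range. Cauchy--Schwarz then gives
\[
\diam(K_{s,t}) \,\lesssim\, \sum_{i=0}^n \diam(Q_i) \,\leq\, \sqrt{n+1}\,\sqrt{\textstyle\sum_i \diam(Q_i)^2} \,\lesssim\, \sqrt{(n+1)\,\Area_{\mathcal{W}}(K_{s,t})},
\]
so the entire proof reduces to showing that the chain length $n$ is bounded by a constant depending only on $L$ and $C_0$.

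The bound $n \lesssim 1$ is what hypotheses (i) and (ii) together should provide. As in the proof of Lemma \ref{Holddiam}, one lower-bounds the hyperbolic distance in $\Hy \setminus K_{s,t}$ from a point high above $K_{s,t}$ to a point near the far end of the chain by $n/4$, because each Whitney square has hyperbolic diameter at most $2$. The matching upper bound is where the $L$-John curve $\alpha$ of condition (ii) enters: using the conformal identifications $\Omega_s \cong \Hy$ and $\Omega_t \cong \Hy \setminus K_{s,t}$ coming from $g_s$, the John structure of $\alpha$ together with Lemma \ref{Johncone} supplies a logarithmic bound on $\rho_{\Omega_s}$ along $\alpha$, which the internal-diameter hypothesis (i) then forces to be $O(1)$ rather than $O(\log(1/\diam))$: condition (i) pins the argument of the logarithm in Lemma \ref{Johncone} to a constant depending only on $C_0$.

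The main obstacle is transferring this bound across the two conformal identifications. The inclusion $\Omega_t \subset \Omega_s$ only gives $\rho_{\Omega_s} \leq \rho_{\Omega_t}$, which is the \emph{wrong} direction for a naive pushforward of the Lemma \ref{Johncone} estimate from $\Omega_s$ to $\Hy \setminus K_{s,t}$. Condition (i) is precisely what resolves this: it forces $K_t \setminus K_s$ to sit in a region around $z_0$ of bounded quasi-hyperbolic size in $\Omega_s$, where the twisted cone around $\alpha$, together with Koebe distortion, dominates the connecting arcs and lets one build a chain of Koebe disks inside $\Omega_t$ realizing the required upper bound on $\rho_{\Hy \setminus K_{s,t}}$. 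Verifying that this chain has length bounded in terms of $L$ and $C_0$ only is the technical heart of the proof.
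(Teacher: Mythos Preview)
Your plan diverges from the paper's proof in a significant way, and the divergence creates the very obstacle you flag at the end. The paper never works in $\Hy \setminus K_{s,t}$ (equivalently, never uses $\rho_{\Omega_t}$). Instead it works entirely in $\Omega_s$, identified with $\Hy$ via $g_s$. For each $w \in K_{s,t}$ it takes the \emph{vertical} hyperbolic line $\ell' \subset \Hy$ through $w$, pulls it back to a geodesic $\ell = g_s^{-1}(\ell') \subset \Omega_s$ with one end at $\infty$, and observes that $\ell$ meets $K_t \setminus K_s$, so $\dist_{\Omega_s}(z_0,\ell) \leq \diam_{\Omega_s}(K_t\setminus K_s) \leq C_0\,\delta_{\Omega_s}(z_0)$ by hypothesis~(i). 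Lemma~\ref{CarrotRay} (a modulus-of-path-families argument using the John curve) then gives $\rho_{\Omega_s}(z_0,\ell) \lesssim 1$, and conformal invariance together with Lemma~\ref{Hy} converts this into $|\Re(w)-\Re(w_0)| \lesssim \Im(w_0)$. Thus $\diam(K_{s,t}) \lesssim \sup\{\Im(w):w\in K_{s,t}\} \lesssim \sqrt{\hcap(K_{s,t})}$ directly, with no Whitney chain, no Cauchy--Schwarz, and no appeal to $\rho_{\Omega_t}$.

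Your route, modeled on Lemma~\ref{Holddiam}, requires an upper bound on a hyperbolic distance in $\Hy \setminus K_{s,t}$, i.e.\ on $\rho_{\Omega_t}$. As you correctly note, Schwarz gives only $\rho_{\Omega_s} \leq \rho_{\Omega_t}$, so the John-curve estimate from Lemma~\ref{Johncone} (which controls $\rho_{\Omega_s}$) does not transfer. Your proposed fix---building a Koebe-disk chain in $\Omega_t$ near the John cone---is not clearly justified: the tip $z_0$ lies in $K_t \setminus K_s$, so the John curve need not stay in $\Omega_t$ near its tip, and condition~(i) by itself does not prevent $K_t\setminus K_s$ from blocking the relevant portion of the cone. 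This is a real gap, and the paper's method shows it is also an unnecessary one: by measuring distances to \emph{geodesic lines} in $\Omega_s$ rather than to \emph{points} in $\Omega_t$, one stays entirely on the side of the Schwarz inequality where the John hypothesis lives.
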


The methods we use to prove Theorems \ref{Johnprop} and \ref{nonslit} have much in common with each other. Central to both is the use of modulus of path families: a conformal invariant and an important tool in the analysis of metric spaces. We shall need this concept only in the planar setting, so we leave the more general metric theory aside (see \cite[Chapter 7]{Hei01} for further discussion). 

Let $\Omega \subset \C$ be a domain, and let $\Gamma$ be a family of paths in $\Omega$. A Borel function $\rho \colon \Omega \rightarrow [0,\infty]$ is called a \ti{density}, and it is said to be \ti{admissible for $\Gamma$} if 
$$\int_{\gamma} \rho ds \geq 1$$
for each locally rectifiable path $\gamma \in \Gamma$, where $ds$ denotes integration with respect to arc-length. The \ti{modulus of $\Gamma$} is then defined as 
$$\mod_2(\Gamma,\Omega) = \inf_{\rho} \int_{\Omega} \rho^2 dA,$$
where the infimum is taken over all admissible densities for $\Omega$, and $dA$ denotes integration with respect to Lebesgue measure. Two basic properties of modulus that we will need are the following (cf. \cite[pp. 50--52]{Hei01}).

\begin{enumerate}
\item[(i)] $\mod_2$ is an outer measure on the collection of paths in $\Omega$: if $\Gamma_1, \Gamma_2, \ldots$ are families of paths in $\Omega$, then $\mod_2(\cup_i \Gamma_i, \Omega) \leq \sum_i \mod_2(\Gamma_i, \Omega)$.
\item[(ii)] $\mod_2$ is a conformal invariant: if $f \colon \Omega \rightarrow \Omega'$ is conformal and $\Gamma$ is a path family in $\Omega$, then $\mod_2(\Gamma, \Omega)  = \mod_2(\Gamma', \Omega')$, where $\Gamma' = \{f \circ \gamma : \gamma \in \Gamma \}$.
\end{enumerate}

Another tool that will be helpful for us is a topological result due to Janiszewski, which is often ``very useful for giving rigorous proofs of `obvious' facts in plane topology" \cite[p. 2]{Pom92}. Our application of it will not contradict this characteristic.

\begin{lemma}[Janiszewski]
Let $A$ be a closed set in $\cl{\D}$ such that $x,y \in \cl{\D}$ lie in different path components of $\cl{\D} \backslash A$. Then there is a connected component, $C$, of $A$ for which $x$ and $y$ lie in different path components of $\cl{\D} \backslash C$.
\end{lemma}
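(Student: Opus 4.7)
The plan is to prove the contrapositive: assuming that no single connected component of $A$ separates $x$ from $y$ in $\cl{\D}$, construct a path from $x$ to $y$ in $\cl{\D}\setminus A$. The main tool I would invoke is Zoretti's theorem, which says that for any compact set $A\subset\C$, any connected component $C$ of $A$, and any $\e>0$, there is a Jordan curve $J\subset\C\setminus A$ enclosing $C$ and lying within the $\e$-neighborhood of $C$. This will let me replace the possibly wild components of $A$ by tame Jordan neighborhoods that inherit the non-separation property.

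First, I would fix, for each connected component $C$ of $A$, a path $\pi_C$ from $x$ to $y$ in $\cl{\D}\setminus C$, which exists by hypothesis. Since $\pi_C$ and $C$ are disjoint and compact, the Euclidean distance $\delta_C := \dist(\pi_C, C) > 0$. Applying Zoretti's theorem with $\e = \delta_C/2$, I obtain a Jordan curve $J_C$ enclosing $C$, disjoint from $A$, and lying in the $(\delta_C/2)$-neighborhood of $C$. Let $V_C$ be the bounded region enclosed by $J_C$. Then $\cl{V_C}$ is disjoint from $\pi_C$ (so $x,y\notin\cl{V_C}$), and $\partial V_C \cap A = \emptyset$.

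The collection $\{V_C\}$ is an open cover of $A$, so by compactness of $A$ I can extract a finite subcover $V_1,\ldots,V_n$ corresponding to distinct components $C_1,\ldots,C_n$. Shrinking the Zoretti neighborhoods further if needed — using that the finitely many $C_i$ are pairwise disjoint compact sets at positive mutual distance — I can arrange the closures $\cl{V_1},\ldots,\cl{V_n}$ to be pairwise disjoint while still covering $A$. Each $\cl{V_i}$ is then a closed topological disk avoiding both $x$ and $y$.

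The final step is the planar topology conclusion: removing finitely many pairwise disjoint closed Jordan regions $\cl{V_1},\ldots,\cl{V_n}$ from $\cl{\D}$, none of which contains $x$ or $y$, leaves a path-connected set containing $x$ and $y$. This follows by induction on $n$: the complement of a single closed Jordan region in $\cl{\D}$ not containing $x$ or $y$ is path-connected (Jordan curve theorem applied inside $\cl{\D}$), and since the disks are disjoint, each subsequent removal happens inside the unchanged complement of the others. A path in $\cl{\D}\setminus\bigcup_i\cl{V_i}\subset\cl{\D}\setminus A$ joining $x$ to $y$ contradicts the hypothesis that $A$ separates them, completing the proof. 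The main obstacle I expect is bookkeeping the passage from the uncountable family of components of $A$ to a finite family of nicely-nested Jordan neighborhoods — but this is precisely what Zoretti's theorem plus compactness of $A$ is designed to handle.
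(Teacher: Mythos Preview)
Your approach via Zoretti's theorem is quite different from the paper's (which reduces to the two-set Janiszewski/Phragmen--Brouwer property by looking at the boundary of the component of $\cl{\D}\setminus\cl{U}$ containing $y$, where $U$ is the $x$-component of $\cl{\D}\setminus A$). The Zoretti-plus-compactness idea can be made to work, but as written there is a genuine gap.

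The problem is the ``shrinking'' step. After extracting a finite subcover $V_1,\ldots,V_n$ corresponding to components $C_1,\ldots,C_n$, you propose to shrink each $V_i$ to a smaller Zoretti neighborhood of $C_i$ so that the $\cl{V_i}$ become pairwise disjoint. You can certainly make them disjoint this way, but you will in general \emph{lose the covering of $A$}: the original $V_i$ may contain many components of $A$ other than $C_i$ (indeed $A$ can have uncountably many components, only $n$ of which are among the $C_i$), and shrinking $V_i$ close to $C_i$ drops those extra components. There is no reason they are picked up by the other $V_j$. A concrete model: take $A$ to be a Cantor set. Any finite Zoretti subcover consists of neighborhoods of $n$ individual points $p_1,\ldots,p_n$, each $V_i$ necessarily swallowing a whole clopen chunk of $A$; shrinking $V_i$ toward $p_i$ uncovers almost all of that chunk.

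There is also a secondary issue in the final step. The assertion that removing a single closed Jordan region from $\cl{\D}$ leaves a path-connected set is false in general: a thin Zoretti neighborhood of a diameter of $\D$ disconnects $\cl{\D}$. What is true is that $x$ and $y$ remain in the same component (because $\pi_{C_i}$ witnesses this), but the inductive step---passing from $\cl{\D}\setminus(\cl{V_1}\cup\cdots\cup\cl{V_k})$ to $\cl{\D}\setminus(\cl{V_1}\cup\cdots\cup\cl{V_{k+1}})$---is exactly the two-set Janiszewski theorem, not the Jordan curve theorem. Once you are willing to invoke two-set Janiszewski here, disjointness of the $\cl{V_i}$ is irrelevant and the faulty shrinking step can simply be deleted: you have finitely many closed sets $\cl{V_i}$, none separating $x$ from $y$, so by induction on the two-set version their union does not separate $x$ from $y$; since $A\subset\bigcup\cl{V_i}$, you are done. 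Note that the paper's own proof also rests on the two-set version (equivalently, the Brouwer property), so in the end both routes hinge on the same input.
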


\begin{proof}
A more common version of Janiszewski's lemma deals with the case that $A$ has two connected components: if $A_1$ and $A_2$ are disjoint, closed subsets of $\cl{\D}$ such that $A_1 \cup A_2$ separates $x$ and $y$ in $\cl{\D}$, then either $A_1$ or $A_2$ must separate $x$ and $y$ (cf. \cite[Theorem 4.26]{Wil10}). In the terminology of \cite[Section II.4]{Wild}, this means that $\cl{\D}$ has the Phragman-Brouwer property. As $\cl{\D}$ is connected and locally connected, this is equivalent to the Brouwer property: if $M$ is a closed and connected subset of $\cl{\D}$ and $V$ is a connected component of $\cl{\D} \backslash M$, then $\partial V := \cl{V} \backslash V$ is closed and connected \cite[Theorem II.4.3]{Wild}.

Let $A$ be as in the statement of the lemma, and let $U$ be the connected component of $\cl{\D}\backslash A$ that contains $x$. In particular, $\cl{U}$ is closed and connected. Let $V$ be the connected component of $\cl{\D} \backslash \cl{U}$ that contains $y$, so the Brouwer property implies that $\partial V$ is connected. Note that $\partial V \subset \partial U \subset A$. Moreover, $\partial V$ separates $x$ and $y$ in $\cl{\D}$ because $x \notin \cl{V}$ and $y \in V$. Thus, the connected component of $A$ that contains $\partial V$ separates $x$ and $y$.
\end{proof}

The main ingredients for the proofs of Theorems \ref{Johnprop} and \ref{nonslit} are the same, so we begin with a few lemmas (likely familiar to many experts in this field) that will be used in both. Let us recall the notation $\log^+(x) = \max\{ \log(x),0\}$ for $x \geq 0$.

\begin{lemma} \label{Hy}
Let $z \in \Hy$ and let $\ell$ denote the vertical line from $0$ to $\infty$. Then
$$\log^+ \lp \frac{|\Re(z)|}{\Im(z)} \rp -2 \leq \rho_{\Hy}(z,\ell) \leq \log^+ \lp \frac{|\Re(z)|}{\Im(z)}\rp + 2.$$
\end{lemma}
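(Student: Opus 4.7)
The plan is to compute the hyperbolic distance $\rho_{\Hy}(z,\ell)$ exactly and then compare it with $\log^+(|\Re(z)|/\Im(z))$.

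First, I would recall that the vertical ray $\ell$ is a hyperbolic geodesic in $\Hy$, so the nearest-point projection of $z = x + iy$ onto $\ell$ is the unique point where the hyperbolic geodesic through $z$ meets $\ell$ orthogonally. Since hyperbolic geodesics in $\Hy$ are either vertical rays or Euclidean semicircles centered on $\R$, the perpendicular from $z$ to $\ell$ must be the circular arc of the semicircle centered at $0$ with radius $|z|$. Hence the foot of the perpendicular is $p = i|z|$.

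Next I would compute $\rho_{\Hy}(z,p)$ by parametrizing the arc as $w(\theta) = |z|e^{i\theta}$. Then $|dw|/\Im(w) = d\theta/\sin\theta$, and writing $\theta_0 = \arg(z) \in (0,\pi)$, we get
\begin{equation*}
\rho_{\Hy}(z,\ell) = \left| \int_{\theta_0}^{\pi/2} \frac{d\theta}{\sin\theta} \right| = \left| \log \tan(\theta_0/2) \right|.
\end{equation*}
Using $\cos\theta_0 = x/|z|$ and $\sin\theta_0 = y/|z|$, the half-angle formula gives $\tan(\theta_0/2) = (|z| - x)/y$, so a rationalization yields the closed form
\begin{equation*}
\rho_{\Hy}(z,\ell) = \log \left( \frac{|z| + |\Re(z)|}{\Im(z)} \right).
\end{equation*}

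Finally, I would compare this expression with $\log^+(|\Re(z)|/\Im(z))$ by splitting into two cases. When $|\Re(z)| \geq \Im(z)$, the inequality $|\Re(z)| \leq |z| \leq \sqrt{2}\,|\Re(z)|$ gives
\begin{equation*}
\log(2|\Re(z)|/\Im(z)) \leq \rho_{\Hy}(z,\ell) \leq \log((\sqrt{2}+1)|\Re(z)|/\Im(z)),
\end{equation*}
which lies in $\log(|\Re(z)|/\Im(z)) \pm 2 = \log^+(|\Re(z)|/\Im(z)) \pm 2$. When $|\Re(z)| < \Im(z)$, we have $|z| < \sqrt{2}\,\Im(z)$, so $\rho_{\Hy}(z,\ell) < \log(\sqrt 2 + 1) < 2$, and of course $\rho_{\Hy}(z,\ell) \geq 0 > -2$, while $\log^+(|\Re(z)|/\Im(z)) = 0$. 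Both inequalities are therefore verified. There is no genuine obstacle here beyond careful bookkeeping of constants; the only slightly delicate point is the case analysis at $|\Re(z)| \approx \Im(z)$, which the additive constant $2$ easily absorbs.
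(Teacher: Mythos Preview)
Your proof is correct and follows essentially the same approach as the paper: both compute the exact hyperbolic distance by integrating $d\theta/\sin\theta$ along the circular geodesic to the foot $i|z|$, and then compare with $\log^+(|\Re(z)|/\Im(z))$ via a simple case split. The only cosmetic difference is that the paper normalizes to $|z|=1$ and leaves the distance as $\log\cot(t_0/2)$ before splitting at $t_0=\pi/10$, whereas you push through to the closed form $\rho_{\Hy}(z,\ell)=\log\bigl((|z|+|\Re(z)|)/\Im(z)\bigr)$ and split at $|\Re(z)|=\Im(z)$; your version makes the final estimates slightly more transparent.
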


\begin{proof}
By scaling, which is an isometry of $\Hy$ preserving $\ell$, we may assume that $|z|=1$. By reflection across $\ell$, we may also assume that $\Re(z) \geq 0$. As such, we can write $z= \cos(t_0) + i \sin(t_0)$ for some $0 < t_0 \leq \pi/2$. The hyperbolic projection of $z$ onto $\ell$ is $i$, so computing the hyperbolic distance gives
$$\rho_{\Hy}(z,\ell) = \rho_{\Hy}(z,i) = \int_{t_0}^{\pi/2} \frac{dt}{\sin t} = \log \frac{\cos(t_0/2)}{\sin(t_0/2)}.$$
If $t_0 \geq \pi/10$, the desired inequality holds easily by noting that 
$$0 \leq \rho_{\Hy}(z,\ell) \leq \log \frac{\cos(\pi/20)}{\sin(\pi/20)} \leq 2,$$
and
$$0 \leq \log^+ \lp \frac{|\Re(z)|}{\Im(z)} \rp = \log^+ \lp \frac{\cos(t_0)}{\sin(t_0)} \rp \leq \log \lp \frac{1}{\sin(\pi/10)}\rp \leq 2.$$ 
Suppose, then, that $0 < t_0 < \pi/10$. As $t \mapsto \cot(t)$ is decreasing on the interval $(0,\pi)$, we have
$$\rho_{\Hy}(z,\ell) = \log \lp \frac{\cos(t_0/2)}{\sin(t_0/2)} \rp \geq \log \lp \frac{\cos(t_0)}{\sin(t_0)} \rp = \log^+ \lp \frac{|\Re(z)|}{\Im(z)} \rp.$$
For the opposite inequality, observe that
$$\cos(t_0/2) \leq 2\cos(t_0) \hspace{0.3cm} \text{and} \hspace{0.3cm} \sin(t_0/2) \geq \sin(t_0)/2,$$
so
$$\rho_{\Hy}(z,\ell) \leq \log \lp \frac{4\cos(t_0)}{\sin(t_0)} \rp \leq \log^+ \lp \frac{|\Re(z)|}{\Im(z)} \rp+ 2,$$
as desired.
\end{proof}

\begin{lemma} \label{WhitneyballRay}
Let $\Omega \subsetneq \C$ be a simply connected domain, and let $\ell$ be a hyperbolic geodesic line in $\Omega$. Fix $z \in \Omega$ and let $\Gamma$ denote the family of paths in $\Omega$ that join the ball $B(z,\delta_{\Omega}(z)/2)$ to $\ell$. Then
$$\rho_{\Omega}(z,\ell) \leq \frac{\pi}{\mod_2(\Gamma,\Omega)} + 3.$$
\end{lemma}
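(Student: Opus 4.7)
The plan is to use conformal invariance to reduce the problem to an explicit modulus computation in $\Hy$. Let $f \colon \Omega \to \Hy$ be a Riemann map, set $w = f(z)$ and $\ell' = f(\ell)$, a hyperbolic geodesic line in $\Hy$, and let $d = \rho_{\Omega}(z,\ell) = \rho_{\Hy}(w,\ell')$. If $d \leq 3$ the inequality is automatic, so assume $d > 3$. By conformal invariance of the modulus, it suffices to estimate $\mod_2(f(\Gamma),\Hy)$.

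First I would replace the Euclidean ball by a hyperbolic one. Integrating the upper bound $\rho_{\Omega} \leq 2/\delta_{\Omega}$ from \eqref{qhcomp} along the straight segment from $z$ to any $z' \in B(z,\delta_{\Omega}(z)/2)$ (on which $\delta_{\Omega} \geq \delta_{\Omega}(z)/2$) shows $\rho_{\Omega}(z,z') \leq 2$, so $B(z,\delta_{\Omega}(z)/2) \subset B_{\rho_{\Omega}}(z,2)$. Conformal invariance of the hyperbolic metric then gives $f(B(z,\delta_{\Omega}(z)/2)) \subset B_{\rho_{\Hy}}(w,2)$. Writing $\Gamma^*$ for the family of paths in $\Hy$ joining $B_{\rho_{\Hy}}(w,2)$ to $\ell'$, we have $f(\Gamma) \subset \Gamma^*$ and hence $\mod_2(\Gamma,\Omega) \leq \mod_2(\Gamma^*,\Hy)$.

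Next I would apply a Möbius self-map of $\Hy$ (simultaneously a hyperbolic isometry and conformally modulus-preserving) to normalize $w = i$ and to place the perpendicular geodesic from $w$ to $\ell'$ along the imaginary axis; under this normalization $\ell'$ becomes the semicircle $\{|z| = e^d, \Im z > 0\}$. A standard calculation identifies $B_{\rho_{\Hy}}(i,2)$ with the Euclidean disk $B(i\cosh 2, \sinh 2)$, whose furthest point from the origin is $ie^2$. Thus $B_{\rho_{\Hy}}(i,2) \subset \{|z| \leq e^2\}$, and every path in $\Gamma^*$ therefore contains a subpath joining the semicircles $\{|z|=e^2, \Im z > 0\}$ and $\{|z|=e^d, \Im z > 0\}$ within $\Hy$, so $\mod_2(\Gamma^*,\Hy)$ is bounded by the modulus of this latter annular family.

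Finally, the modulus of paths connecting these two semicircles is computed explicitly. Under $z \mapsto \log z$, $\Hy$ is mapped conformally onto the strip $\{0 < \Im < \pi\}$, with the two semicircles sent to the vertical segments at $\Re = 2$ and $\Re = d$. The constant density $1/(d-2)$ on the intervening rectangle is admissible, yielding modulus at most $\pi/(d-2)$. Combining everything, $\mod_2(\Gamma,\Omega) \leq \pi/(d-2)$, i.e.\ $d \leq \pi/\mod_2(\Gamma,\Omega) + 2 \leq \pi/\mod_2(\Gamma,\Omega) + 3$. I do not anticipate a significant obstacle; the only nontrivial ingredient is the comparison in the first step between a Euclidean half-distance ball and a hyperbolic ball of absolute radius, and the remaining normalization and modulus estimate are classical.
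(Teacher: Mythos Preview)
Your proof is correct and follows essentially the same approach as the paper: transfer to $\Hy$ by a Riemann map, normalize so that the geodesic $\ell'$ is a semicircle centered at the origin, and bound the modulus using the standard radial density on a half-annulus. The only notable difference is that you control the image of $B(z,\delta_{\Omega}(z)/2)$ by first enclosing it in the hyperbolic ball $B_{\rho_{\Omega}}(z,2)$ via the comparison \eqref{qhcomp}, whereas the paper uses the Koebe $1/4$-theorem and the growth theorem directly to get $f(B(z,\delta_\Omega(z)/2)) \subset B(0,10a)$; these are equivalent bookkeeping choices and yield the same conclusion (you even obtain the slightly tidier constant $+2$).
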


We should remark here that if $z \in \ell$, then $\Gamma$ contains a constant path. As such, there is no admissible density $\rho$ for $\Gamma$, so $\mod_2(\Gamma,\Omega) = \infty$.

\begin{proof}
Let $p \in \ell$ be the point for which $\rho_{\Omega}(z,p) = \rho_{\Omega}(z,\ell)$, and let $f \colon \Omega \rightarrow \Hy$ be the conformal map for which $f(p)=i$ and $f(z)=ai$ for some $0 < a \leq 1$. The hyperbolic geodesic in $\Omega$ determined by $p$ and $z$ intersects $\ell$ orthogonally at $p$, so $\ell' = f(\ell)$ must intersect the imaginary axis orthogonally at $i$. Consequently, we know that $\ell'$ is the semi-circe $\partial \D \cap \Hy$.

As $f$ is an isometry in the hyperbolic metrics, we have 
$$\rho_{\Omega}(z,\ell) = \rho_{\Hy}(ai,\ell') = \rho_{\Hy}(ai,i) = \log(1/a).$$
Let $E = f(B(z,\delta_{\Omega}(z)/2))$, and let $\Gamma'$ denote the family of paths in $\Hy$ that join $E$ to the line $\ell'$. Conformal invariance of modulus implies that $\mod_2(\Gamma, \Omega) = \mod_2(\Gamma', \Hy)$. It therefore suffices to prove that
$$\log(1/a) \leq \frac{\pi}{\mod_2(\Gamma',\Hy)} + 3.$$

To this end, we first note that if $a \geq 1/10$, the desired inequality holds trivially. Thus, we may assume that $a < 1/10$. By the Koebe $1/4$-theorem, we have 
$$|f'(z)|\delta_{\Omega}(z) \leq 4 \delta_{\Hy}(f(z)) = 4a.$$ 
Moreover, the growth theorem \cite[Theorem 2.6]{Dur83} ensures that
$$|f(w)-f(z)| \leq 2|f'(z)|\delta_{\Omega}(z) \leq 8a$$
for all $w \in B(z,\delta_{\Omega}(z)/2)$, so that $E \subset B(0,10a)$. We now exhibit a density $\rho$ that gives the desired upper bound on $\mod_2(\Gamma',\Hy)$, namely,
$$\rho(z) = \frac{1}{\log(1/10a)|z|}\cdot \chi_{\{z \in \Hy: 10a \leq |z| \leq 1\}}(z), \hspace{0.3cm} \text{for }
z \in \Hy.$$
Every path in $\Gamma'$ joins $\{z \in \Hy : |z|=10a\}$ to $\ell' = \{z \in \Hy : |z|=1\}$, and this easily implies that $\rho$ is admissible for $\Gamma'$. Consequently,
$$\mod_2(\Gamma',\Hy) \leq \int_{\Hy} \rho(z)^2 dA = \frac{1}{\log(1/10a)^2} \int_0^{\pi} \int_{10a}^1 \frac{1}{r} dr d\theta = \frac{\pi}{\log(1/10a)}.$$
Rearranging this inequality gives
$$\log(1/a) \leq \frac{\pi}{\mod_2(\Gamma',\Hy)} + \log(10),$$
as desired.
\end{proof}

\begin{lemma} \label{CarrotRay}
Let $K$ be a half-plane hull, and let $\ell$ be a hyperbolic geodesic line in $\Omega = \Hy \backslash K$ with one endpoint at $\infty$. Suppose that there is an $L$-John curve in $\Omega$ with tip $z \in \Omega$ and base-point $\infty$. Then
$$\rho_{\Omega}(z,\ell) \leq C \lp 1+ \log^+ \lp \frac{\dist_{\Omega}(z,\ell)}{\delta_{\Omega}(z)} \rp \rp,$$
where $C>0$ depends only on $L$.
\end{lemma}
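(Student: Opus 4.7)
The plan is to combine Lemma \ref{WhitneyballRay} with the $L$-John structure of $\alpha$. Set $r := \delta_\Omega(z)$ and $d := \dist_\Omega(z,\ell)$. By Lemma \ref{WhitneyballRay}, the desired bound on $\rho_\Omega(z,\ell)$ will follow from
$$
\mod_2(\Gamma, \Omega) \gtrsim \frac{1}{1 + \log^+(d/r)},
$$
where $\Gamma$ is the family of paths in $\Omega$ joining $B(z, r/2)$ to $\ell$ and the implicit constant depends only on $L$. Thus the first task is to reduce the hyperbolic estimate to this modulus lower bound.

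Next, I would straighten $\ell$ by conformally mapping $f \colon \Omega \to \Hy$ so that the endpoint of $\ell$ at $\infty \in \partial\Omega$ goes to $\infty \in \partial\Hy$ and the other endpoint of $\ell$ goes to $0$, making $f(\ell) = i\R^+$. By conformal invariance of $\mod_2$ and Koebe's $1/4$-theorem, $f(B(z,r/2))$ contains a disk $B(\tilde z, c\Im(\tilde z))$ with $\tilde z := f(z)$ and $c$ absolute, so it is enough to lower-bound the modulus in $\Hy$ of paths from $B(\tilde z, c\Im(\tilde z))$ to $i\R^+$. A standard annular admissible density in $\Hy$, in the spirit of the proof of Lemma \ref{WhitneyballRay}, produces the estimate
$$
\mod_2(\tilde\Gamma, \Hy) \gtrsim \frac{1}{1 + \log^+\bigl(|\Re \tilde z|/\Im \tilde z\bigr)},
$$
so the task reduces to proving $|\Re \tilde z|/\Im \tilde z \lesssim 1 + d/r$ with constant depending only on $L$.

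For this last inequality I would combine the John curve $\alpha$ with a connecting arc $E \subset \Omega$ of diameter at most $2d$ joining $z$ to a point of $\ell$, whose existence is guaranteed by the definition of $d$. Walk along $\alpha$ to the point $x^* \in \alpha$ with $\diam(\alpha[z,x^*])$ a large multiple of $d$ depending on $L$; the $L$-John property gives $\delta_\Omega(x^*) \gtrsim d$, and Lemma \ref{Johncone} yields $\rho_\Omega(z, x^*) \lesssim 1 + \log(d/r)$. Concatenating $\alpha[z,x^*]$ with $E$ produces a continuum in $\Omega$ joining $x^*$ to $\ell$ of diameter controlled by $d$, hence of diameter comparable to $\delta_\Omega(x^*)$ up to $L$. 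Applying Koebe's distortion theorem along $\alpha$ on the dyadic John scales to transport this geometric information to $\Hy$ then gives $|\Re \tilde z|/\Im \tilde z \lesssim 1 + d/r$, completing the reduction.

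The main obstacle is the final step: extracting the bound $|\Re \tilde z|/\Im \tilde z \lesssim 1 + d/r$ from the coupling of the John cone and the internal distance. The subtlety is that the connecting arc $E$ may hug $\partial\Omega$ arbitrarily closely, so $E$ itself cannot be used to estimate the conformal distortion of $f$. One must instead lean on the wide John ball at $x^*$, which has Euclidean radius $\gtrsim d/L$ and on which Koebe's theorem controls $|f'|$ uniformly, and then track the distortion dyadically along $\alpha$ back to $z$ to transfer the geometric bound on the $\Omega$-side into the half-plane. This careful Koebe bookkeeping along the John curve, rather than the initial reduction to modulus, is the technical core of the argument.
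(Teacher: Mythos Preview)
Your reduction in the first three paragraphs is essentially circular. After you map $\Omega$ to $\Hy$ with $\ell \mapsto i\R^+$, Lemma~\ref{Hy} says precisely that $\rho_\Omega(z,\ell) = \rho_\Hy(\tilde z, i\R^+)$ is comparable to $\log^+(|\Re\tilde z|/\Im\tilde z)$. So ``the task reduces to proving $|\Re\tilde z|/\Im\tilde z \lesssim 1 + d/r$'' is just a restatement of the lemma; the modulus intermediate step via Lemma~\ref{WhitneyballRay} adds nothing, and all the content must be in your final paragraph.

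That final step has a genuine gap. You walk along $\alpha$ to $x^*$ with $\diam(\alpha[z,x^*]) = Md$ and invoke the John condition to get $\delta_\Omega(x^*) \geq Md/L$. Concatenating with $E$ gives a continuum from $x^*$ to a point $p\in\ell$ of diameter at most $(M+2)d$. For this to place $p$ inside $B(x^*,\tfrac12\delta_\Omega(x^*))$ --- the only ball on which Koebe gives you uniform control of $f$ --- you would need $(M+2)d \leq Md/(2L)$, i.e.\ $2L(M+2)\le M$, which is impossible for $L\ge 1$. So the wide John ball at $x^*$ need not contain any point of $\ell$, and ``Koebe bookkeeping along $\alpha$'' only controls the hyperbolic distance from $z$ to $x^*$ (this is Lemma~\ref{Johncone}), not from $x^*$ to $\ell$. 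You have merely moved the problem from $z$ to $x^*$: you now know $\dist_\Omega(x^*,\ell)\lesssim_L \delta_\Omega(x^*)$ with a constant strictly larger than $1/2$, which is the same situation you started with and does not iterate to anything better.

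The paper's proof supplies the missing idea. It does \emph{not} apply Lemma~\ref{WhitneyballRay} at $z$; instead it uses the connecting arc $E$ (your ``$\beta$'') together with the tails of $\alpha$ and $\ell$ going to $\infty$ to form a Jordan curve bounding a topological triangle $\overline\Delta\subset\Omega$. Janiszewski's lemma then forces every circle $\partial B(z,r)$ with $R<r<2R$ (where $R=2d$) to contain an arc in $\Omega$ joining $\alpha$ to $\ell$. This yields a uniform lower bound $\mod_2(\Gamma,\Omega)\ge 1/25$ for the family of paths in $\Omega$ joining $\alpha\cap A(z,R,2R)$ to $\ell$. Covering $\alpha\cap A(z,R,2R)$ by boundedly many (in $L$) balls $B(w_i,\tfrac12\delta_\Omega(w_i))$ and applying subadditivity plus Lemma~\ref{WhitneyballRay} at some $w_{i_0}$ gives $\rho_\Omega(w_{i_0},\ell)\lesssim_L 1$ directly; then Lemma~\ref{Johncone} bounds $\rho_\Omega(z,w_{i_0})$. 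The topological annular-crossing argument is exactly what bridges $\alpha$ and $\ell$ hyperbolically, and it is absent from your outline.
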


\begin{proof}
We may, of course, assume that $z$ does not lie on $\ell$. For notational ease, let $R = 2 \dist_{\Omega}(z,\ell) >0$. If $\delta_{\Omega}(z) > R$, then there is a point $w \in \ell$ with $|z-w| < \delta_{\Omega}(z)/2$, which immediately gives the bound $\rho_{\Omega}(z,\ell) \leq 2$. Thus, we may also assume that $\delta_{\Omega}(z) \leq R$.

Let $\alpha \subset \Omega$ be the $L$-John curve with tip $z$ and base-point at infinity. Suppose, as a first case, that $\alpha$ intersects $\ell$ inside the closed ball $\cl{B}(z,2R)$. Let $x \in \alpha \cap \ell$ be a point of intersection inside this ball, so Lemma \ref{Johncone}, applied to the curve $\alpha$, gives the bound
$$\rho_{\Omega}(z,\ell) \leq \rho_{\Omega}(z,x) \lesssim 1 +  \log^+ \lp \frac{\delta_{\Omega}(x)}{\delta_{\Omega}(z)} \rp,$$
where the implicit constant depends only on $L$. As 
$$\delta_{\Omega}(x) \leq |x-z| + \delta_{\Omega}(z) \leq 3R = 6\dist_{\Omega}(z,\ell),$$
we obtain
$$\rho_{\Omega}(z,\ell) \lesssim 1 + \log^+ \lp \frac{\dist_{\Omega}(z,\ell)}{\delta_{\Omega}(z)} \rp$$
with constant depending only on $L$.

We may now assume that $\alpha$ and $\ell$ are disjoint in $\cl{B}(z,2R)$. Our first goal is to show that for all $R < r < 2R$, there is a sub-arc of the circle $\partial B(z,r)$ that lies in $\Omega$ and joins $\ell$ to $\alpha$. To do this, fix an arc $\beta \subset \Omega$ that joins $z$ to $\ell$ and has $\diam(\beta) < 2\dist_{\Omega}(z,\ell) = R$. Note that $\beta \subset B(z,R)$. Consider the three curves $\ell$, $\alpha$, and $\beta$. It is not difficult to see that there are sub-arcs $\ell' \subset \ell$, $\alpha' \subset \alpha$, and $\beta' \subset \beta$ such that any two of these sub-arcs intersect in a single point (possibly at infinity) and whose concatenation forms a closed Jordan curve on the Riemann sphere. 

Let $\mathcal{C} \subset \Omega$ denote this Jordan curve (note that we do not include the possible point at infinity). By the Jordan curve theorem, $\C \backslash \mathcal{C}$ has two connected components. As $\partial \Omega$ is connected and is disjoint from $\mathcal{C}$, it lies entirely in one of these components. Let $\Delta$ denote the other component. It is then not hard to see that 
$$\cl{\Delta} = \Delta \cup \mathcal{C} \subset \Omega$$
is homeomorphic to a closed triangle (possibly with one vertex removed), where the three sides of the triangle correspond to the ``sides" $\ell'$, $\alpha'$, and $\beta'$ of $\cl{\Delta}$.

Let $x \in B(z,R)$ be the point at which $\ell'$ and $\beta'$ intersect, and let 
$$y \in \Omega \cup \{\infty\} \backslash B(z,2R)$$
be the point at which $\ell'$ and $\alpha'$ intersect. As $x,y \in \mathcal{C} \cup\{\infty\}$, it is clear that there are many curves in $\cl{\Delta} \cup \{y\}$ that join $x$ and $y$. Fix $R < r < 2R$, and observe that every such curve passes through $\partial B(z,r) \cap \cl{\Delta}$. In other words, the closed set $\partial B(z,r) \cap \cl{\Delta}$ separates the points $x$ and $y$ in the topological triangle $\cl{\Delta} \cup \{y\}$. By Janiszewski's lemma, there is a connected component of $\partial B(z,r) \cap \cl{\Delta}$, call it $C_r$, that separates $x$ and $y$ in this triangle. Observe that $C_r$ is a closed sub-arc of the circle $\partial B(z,r)$ and its endpoints lie on $\mathcal{C}$. In particular, its endpoints must be on $\ell' \cup \alpha'$, due to the fact that $\beta' \subset B(z,R)$. Notice that if both endpoints of $C_r$ were on $\ell'$, then it could not separate the vertices $x$ and $y$ in the topological triangle $\cl{\Delta} \cup \{y\}$. Similarly, it is not possible for both endpoints of $C_r$ to be on $\alpha'$. Thus, $C_r$ is a sub-arc of $\partial B(z,r)$, lying inside $\cl{\Delta} \subset \Omega$, that joins $\ell'$ and $\alpha'$.

Now, let $A(z,R,2R) = B(z,2R) \backslash \cl{B}(z,R)$ denote the annulus, and let $\Gamma$ be the family of paths in $\Omega$ that join $\alpha \cap A(z,R,2R)$ to $\ell$. Suppose that $\rho$ is an admissible density for $\Gamma$, so by what we have just shown, we know that
$$\int_{\Omega \cap \partial B(z,r)} \rho ds \geq 1$$
for each $R < r < 2R$. The Cauchy-Schwarz inequality then gives
$$\int_{\Omega \cap \partial B(z,r)} \rho^2 ds \geq \frac{1}{2\pi r},$$
so that
$$\int_{\Omega} \rho^2 dA \geq \int_{R}^{2R} \lp \int_{\partial B(z,r) \cap \Omega} \rho^2 ds \rp dr \geq \int_{R}^{2R} \frac{1}{2 \pi r} dr = \frac{\log2}{2\pi}.$$
In this way, we find that $\mod_2(\Gamma, \Omega) \geq 1/25$.

By the $L$-John property for $\alpha$, every point $w \in \alpha \cap A(z,R,2R)$ has $\delta_{\Omega}(w) \geq R/L$. It is therefore possible to find a finite set of points
$$w_1,\ldots,w_n \in \alpha \cap A(z, R, 2R),$$
with $n\geq 1$ depending only on $L$, such that the collection of balls 
$$B(w_1, \delta_{\Omega}(w_1)/2),\ldots, B(w_n, \delta_{\Omega}(w_n)/2)$$
covers $\alpha \cap A(z, R, 2R)$. For each $1 \leq i \leq n$, let $\Gamma_i$ be the family of paths in $\Omega$ that join $B(w_i,\delta_{\Omega}(w_i)/2)$ to the geodesic line $\ell$. Thus,
$$\Gamma \subset \bigcup_{i=1}^n \Gamma_i,$$
so by the sub-additivity property for modulus, we obtain
$$1/25 \leq \mod_2(\Gamma,\Omega) \leq \sum_{i=1}^n \mod_2(\Gamma_i,\Omega).$$
This implies that there is $1 \leq i_0 \leq n$ for which $\mod_2(\Gamma_{i_0},\Omega) \gtrsim 1$, and Lemma \ref{WhitneyballRay} then gives
\begin{equation} \label{w_i_0}
\rho_{\Omega}(w_{i_0},\ell) \lesssim 1,
\end{equation}
where the implicit constants depend only on $L$.

Invoking the $L$-John condition for $\alpha$, along with Lemma \ref{Johncone}, once more, we have the bound
$$\rho_{\Omega}(z,w_{i_0}) \lesssim 1+ \log^+ \lp \frac{\delta_{\Omega}(w_{i_0})}{\delta_{\Omega}(z)} \rp$$
with implicit constant again depending only on $L$. As $\delta_{\Omega}(w_{i_0}) \leq 2R + \delta_{\Omega}(z) \leq 3R$, this gives
$$\rho_{\Omega}(z,w_{i_0}) \lesssim 1+ \log^+ \lp \frac{R}{\delta_{\Omega}(z)} \rp \lesssim 1+ \log^+ \lp \frac{\dist_{\Omega}(z,\ell)}{\delta_{\Omega}(z)} \rp.$$
Along with the bound $\rho_{\Omega}(w_{i_0},\ell) \lesssim 1$ from \eqref{w_i_0}, we obtain the desired result.
\end{proof}

With the three preceding lemmas established, we are ready to prove Theorems \ref{Johnprop} and \ref{nonslit}. 

\begin{proof}[\textbf{Proof of Theorem \ref{Johnprop}}]
Fix $0\leq s < t \leq T$. Let $z_0 \in K_t \backslash K_s$ be a point with
$$\diam_{\Omega_s}(K_t \backslash K_s) \leq C_0 \delta_{\Omega_s}(z_0),$$
and for which there is an $L$-John curve in $\Omega_s$ with tip $z_0$ and base-point at infinity. Let $K_{s,t} = g_s(K_t \backslash K_s) \subset \Hy$ denote the transition hull, and let $w_0 = g_s(z_0)$. Our goal is to bound $\diam(K_{s,t})$ from above by something comparable to the quantity $\sup\{ \Im(w) : w \in K_{s,t} \}$.

To this end, fix $w \in K_{s,t}$ and consider $|\Re(w)-\Re(w_0)|$. Let $\ell'$ denote the hyperbolic geodesic line in $\Hy$ that passes through $w$ and $\infty$. Then $\ell = g_s^{-1}(\ell')$ is a hyperbolic geodesic in $\Omega_s$ that intersects $K_t \backslash K_s$ and has an endpoint at infinity. In particular, as $z_0 \in K_t \backslash K_s$, we have
$$\dist_{\Omega_s}(z_0,\ell) \leq \diam_{\Omega_s}(K_t \backslash K_s)  \leq C_0 \delta_{\Omega_s}(z_0).$$
Combining this with Lemmas \ref{Hy} and \ref{CarrotRay}, we obtain
$$\begin{aligned}
\log^+ \lp \frac{|\Re(w)-\Re(w_0)|}{\Im(w_0)} \rp &\leq \rho_{\Hy}(w_0,\ell') + 2 = \rho_{\Omega_s}(z_0,\ell) +2 \\
&\lesssim 1 + \log^+ \lp \frac{\dist_{\Omega_s}(z_0,\ell)}{\delta_{\Omega_s}(z_0)} \rp \lesssim 1,
\end{aligned}$$
where the implicit constants depend only on $L$ and $C_0$.
Thus, we find
$$|\Re(w) - \Re(w_0)| \lesssim \Im(w_0),$$
for any $w \in K_{s,t}$, with constant depending only on $L$ and $C_0$. This inequality easily implies that $\diam(K_{s,t}) \lesssim \sup \{\Im(w) : w \in K_{s,t}\}$. 

Recall that by Lemma \ref{Warea}, which relates half-plane capacity to the area of Whitney squares intersecting a given hull, we have
$$\sup \{\Im(w) : w \in K_{s,t}\} \lesssim \sqrt{\hcap(K_{s,t})} = \sqrt{2|s-t|},$$
with uniform constants. Thus, $\diam(K_{s,t}) \lesssim \sqrt{|s-t|}$ with constant depending only on $L$ and $C_0$. 

Pommerenke's theorem now guarantees that there is a continuous driving term, $\lambda$, associated to the Loewner chain $K_t$. The desired $\Lip(1/2)$ bound on $\lambda$ then follows, once again, from the fact that $|\lambda_s -\lambda_t| \leq 4\diam(K_{s,t})$.
\end{proof}

\begin{proof}[\textbf{Proof of Theorem \ref{nonslit}}]
Let us again make a comment about the dependence of the implicit constant on $T$. In the course of the proof, we will use an estimate that depends on $\diam(K_T)$. Fortunately, our hypotheses allow us to replace this by dependence on $T$. Indeed, let $z_0 \in K_T$ be a point with
$$\diam(K_T) \leq C_0 \Im(z_0) \lp 1+ \log^+ \lp \frac{1}{\Im(z_0)} \rp \rp,$$
which is guaranteed by assumption (ii) applied to $s=0$ and $t=T$. Note that $\Im(z_0) \lesssim \sqrt{\hcap(K_T)} \lesssim \sqrt{T}$, with absolute constants, so
$$\diam(K_T) \lesssim \sqrt{T} \lp 1+ \log^+ (1/T) \rp,$$
with implicit constant depending only on $C_0$.

We now begin the proof. Fix $0 \leq s < t \leq T$, and let $x_0 \in \Omega_s$ and $z_0 \in K_t \backslash K_s$ be points satisfying the hypotheses of the theorem. Once again, let $K_{s,t} = g_s(K_t\backslash K_s)$ be the transition hull, and let $w_0 = g_s(z_0)$. Fix a point $w \in K_{s,t}$. As in the previous proof, we wish to bound $|\Re(w)-\Re(w_0)|$ in terms of $\Im(w_0)$. 

Towards this end, let $\ell'$ denote the hyperbolic geodesic line in $\Hy$ that passes through $w$ and $\infty$. Then $\ell = g_s^{-1}(\ell')$ is a hyperbolic geodesic in $\Omega_s$ that intersects $K_t \backslash K_s$ and has another endpoint at infinity. In particular,
$$\dist_{\Omega_s}(x_0,\ell) \leq \diam_{\Omega_s}(\{x_0\} \cup K_t \backslash K_s) \leq C_0 \delta_{\Omega_s}(x_0).$$
Along with Lemma \ref{CarrotRay}, this bound allows us to estimate
$$\rho_{\Omega_s}(x_0,\ell) \lesssim 1+ \log^+ \lp \frac{\dist_{\Omega_s}(x_0,\ell)}{\delta_{\Omega_s}(x_0)} \rp \lesssim 1,$$
with implicit constants depending only on $L$ and $C_0$. Consequently, we have
$$\begin{aligned}
\rho_{\Omega_s}(z_0,\ell) &\leq \rho_{\Omega_s}(z_0,x_0) + \rho_{\Omega_s}(x_0,\ell) \\
&\leq \frac{1}{\beta} \log^+ \lp \frac{\diam_{\Omega_s}(K_t \backslash K_s)}{\delta_{\Omega_s}(z_0)} \rp + C \\
&\leq  \frac{1}{\beta} \log^+ \lp 1+\log^+ \lp \frac{1}{\delta_{\Omega_s}(z_0)} \rp \rp + C',
\end{aligned}$$
where $C$ and $C'$ depend only on $L$ and $C_0$.

Consider the quantity $\delta_{\Omega_s}(z_0)$. If $\Im(w_0) \leq 10\diam(K_s)$, then the distortion estimate in Lemma \ref{neardist} guarantees that $\delta_{\Omega_s}(z_0) \gtrsim \Im(w_0)^2$, with implicit constant depending only on $\diam(K_s)$, and therefore depending only on $T$. Otherwise, if $\Im(w_0) > 10\diam(K_s)$, then we have $\delta_{\Omega_s}(z_0) \gtrsim \Im(w_0)$ because $g_s$ does not move points more than distance $3\diam(K_s)$. In either case, we obtain
$$\rho_{\Omega_s}(z_0,\ell) \leq \frac{1}{\beta} \log^+ \lp 1+\log^+ \lp \frac{1}{\Im(w_0)} \rp \rp + C'',$$
where $C''$ now depends on $L$, $\beta$, $C_0$, and $T$. This bound, along with Lemma \ref{Hy}, gives
$$\begin{aligned}
\log^+ \lp \frac{|\Re(w) - \Re(w_0)|}{\Im(w_0)} \rp &\leq \rho_{\Hy}(w_0, \ell') + 2 = \rho_{\Omega_s}(z_0,\ell) +2\\
&\leq \frac{1}{\beta} \log^+ \lp 1+\log^+ \lp \frac{1}{\Im(w_0)} \rp \rp + C'' + 2.
\end{aligned}$$
Exponentiating, we obtain
$$|\Re(w) - \Re(w_0)| \lesssim \Im(w_0) \lp 1+ \log^+ \lp \frac{1}{\Im(w_0)} \rp \rp^{1/\beta},$$
where the implicit constant depends on $L$, $\beta$, $C_0$, and $T$. 

Recall that $w \in K_{s,t}$ was arbitrary. If it were the case that 
$$\diam(K_{s,t}) \leq 4\sup\{\Im(w) : w \in K_{s,t} \},$$
then we would immediately have 
$$\diam(K_{s,t}) \lesssim \sqrt{\hcap(K_{s,t})} \lesssim \sqrt{|s-t|}$$
with absolute implicit constants. Otherwise, we know that
$$\begin{aligned}
\diam(K_{s,t}) &\lesssim \sup \{ |\Re(w) - \Re(w_0)| : w \in K_{s,t} \} \\
&\lesssim \Im(w_0) \lp 1+ \log^+ \lp \frac{1}{\Im(w_0)} \rp \rp^{1/\beta}.
\end{aligned}$$
If $\Im(w_0) \geq 1$, then this bound, along with the fact that 
\begin{equation} \label{s-t}
\Im(w_0)^2 \lesssim \hcap(K_{s,t}) \lesssim |s-t|,
\end{equation}
gives $\diam(K_{s,t}) \lesssim \sqrt{|s-t|}$. In the case that $\Im(w_0) < 1$, the estimate in \eqref{s-t} still holds and allows us to bound
\begin{equation} \label{s-t2}
\diam(K_{s,t}) \lesssim \sqrt{|s-t|} \lp 1 + \log^+ \lp \frac{1}{|s-t|} \rp \rp^{1/\beta}.
\end{equation}
In all of these inequalities, the implicit constants depend only on $L$, $\beta$, $C_0$, and $T$.

To conclude, we once again invoke Pommerenke's theorem to obtain a continuous driving term, $\lambda$, associated to this Loewner chain. The desired control on $\lambda$ then follows from \eqref{s-t2}, together with with the estimate $|\lambda_s - \lambda_t| \leq 4\diam(K_{s,t})$, which we have used several times before.
\end{proof}

In addition to Question \ref{q}, which motivated this section, we end with a few other questions of a similar spirit. First, what ``local" geometric properties, perhaps along the lines of the hypotheses in Theorem \ref{nonslit}, do $\SLE_{\kappa}$ domains satisfy, in particular when $\kappa > 4$? We know, of course, that such domains are H\"older domains, almost surely, but this unfortunately does not guarantee any of the local properties assumed in our theorem. There are good known estimates on the number of times $\SLE_{\kappa}$ curves cross annuli (cf. \cite{Wern11, KS13}); perhaps this could be a place to begin.

Ultimately, from a deterministic point of view, it is highly desirable to understand the converse of what we have studied here: which driving functions $\lambda$ generate Loewner chains whose complementary domains are H\"older? All known deterministic conditions break down if $|| \lambda ||_{1/2} \geq 4$, and moreover, they guarantee much stronger geometric properties than the H\"older property. From what we have established here, such driving functions must lie in the weak-$\Lip(1/2)$ class. Going further: are there finer analytic properties that these functions must have?

\begin{bibdiv}
\begin{biblist}

\bib{Bau05}{article}{
   author={Bauer, R.O.},
   title={Chordal Loewner families and univalent Cauchy transforms},
   journal={J. Math. Anal. Appl.},
   volume={302},
   date={2005},
   number={2},
   pages={484--501},
}

\bib{BP82}{article}{
   author={Becker, J.},
   author={Pommerenke, C.},
   title={H\"older continuity of conformal mappings and non-quasiconformal
   Jordan curves},
   journal={Comment. Math. Helv.},
   volume={57},
   date={1982},
   number={2},
   pages={221--225},
}

\bib{Bef08}{article}{
   author={Beffara, V.},
   title={The dimension of the SLE curves},
   journal={Ann. Probab.},
   volume={36},
   date={2008},
   number={4},
   pages={1421--1452},
   issn={0091-1798},
}

\bib{Dur70}{book}{
   author={Duren, P.},
   title={Theory of $H^{p}$ spaces},
   series={Pure and Applied Mathematics, Vol. 38},
   publisher={Academic Press, New York-London},
   date={1970},
   pages={xii+258},
}

\bib{Dur83}{book}{
   author={Duren, P.},
   title={Univalent functions},
   series={Grundlehren der Mathematischen Wissenschaften},
   volume={259},
   publisher={Springer-Verlag, New York},
   date={1983},
   pages={xiv+382},
}

\bib{EE01}{article}{
   author={Earle, C.},
   author={Epstein, A.},
   title={Quasiconformal variation of slit domains},
   journal={Proc. Amer. Math. Soc.},
   volume={129},
   date={2001},
   number={11},
   pages={3363--3372},
}

\bib{GeHag}{book}{
   author={Gehring, F.W.},
   author={Hag, K.},
   title={The ubiquitous quasidisk},
   series={Mathematical Surveys and Monographs},
   volume={184},
   publisher={American Mathematical Society, Providence, RI},
   date={2012},
   pages={xii+171},
}

\bib{GHM89}{article}{
   author={Gehring, F.W.},
   author={Hag, K.},
   author={Martio, O.},
   title={Quasihyperbolic geodesics in John domains},
   journal={Math. Scand.},
   volume={65},
   date={1989},
   number={1},
   pages={75--92},
}

\bib{Hei01}{book}{
   author={Heinonen, J.},
   title={Lectures on analysis on metric spaces},
   series={Universitext},
   publisher={Springer-Verlag, New York},
   date={2001},
   pages={x+140},
}

\bib{Hein89}{article}{
   author={Heinonen, J.},
   title={Quasiconformal mappings onto John domains},
   journal={Rev. Mat. Iberoamericana},
   volume={5},
   date={1989},
   number={3-4},
   pages={97--123},
}

\bib{KNK04}{article}{
   author={Kager, W.},
   author={Nienhuis, B.},
   author={Kadanoff, L.},
   title={Exact solutions for Loewner evolutions},
   journal={J. Statist. Phys.},
   volume={115},
   date={2004},
   number={3-4},
   pages={805--822},
}

\bib{KS13}{article}{
   author={Kemppainen, A.},
   author={Smirnov, S.},
   title={Random curves, scaling limits and Loewner evolutions},
   journal={preprint}
   date={2013}
   eprint={arXiv:1212.6215v2},
}

\bib{LLN09}{article}{
   author={Lalley, S.},
   author={Lawler, G.},
   author={Narayanan, H.},
   title={Geometric interpretation of half-plane capacity},
   journal={Electron. Commun. Probab.},
   volume={14},
   date={2009},
   pages={566--571},
}

\bib{Law05}{book}{
   author={Lawler, G.},
   title={Conformally invariant processes in the plane},
   series={Mathematical Surveys and Monographs},
   volume={114},
   publisher={American Mathematical Society, Providence, RI},
   date={2005},
   pages={xii+242},
}

\bib{LSW04}{article}{
   author={Lawler, G.},
   author={Schramm, O.},
   author={Werner, W.},
   title={Conformal invariance of planar loop-erased random walks and
   uniform spanning trees},
   journal={Ann. Probab.},
   volume={32},
   date={2004},
   number={1B},
   pages={939--995},
}

\bib{LSW01}{article}{
   author={Lawler, G.},
   author={Schramm, O.},
   author={Werner, W.},
   title={Values of Brownian intersection exponents. I. Half-plane
   exponents},
   journal={Acta Math.},
   volume={187},
   date={2001},
   number={2},
   pages={237--273},
}

\bib{Lin05}{article}{
   author={Lind, J.},
   title={A sharp condition for the Loewner equation to generate slits},
   journal={Ann. Acad. Sci. Fenn. Math.},
   volume={30},
   date={2005},
   number={1},
   pages={143--158},
}

\bib{LR12}{article}{
   author={Lind, J.},
   author={Rohde, S.},
   title={Space-filling curves and phases of the Loewner equation},
   journal={Indiana Univ. Math. J.},
   volume={61},
   date={2012},
   number={6},
   pages={2231--2249},
}

\bib{LR14}{article}{
   author={Lind, J.},
   author={Rohde, S.},
   title={Loewner curvature},
   journal={preprint}
   date={2014}
   eprint={arXiv:1406.2607v1},
}

\bib{LT14}{article}{
   author={Lind, J.},
   author={Tran, H.},
   title={Regularity of Loewner curves},
   journal={preprint}
   date={2014}
   eprint={arXiv:1411.2164v1},
}

\bib{MR05}{article}{
   author={Marshall, D.},
   author={Rohde, S.},
   title={The Loewner differential equation and slit mappings},
   journal={J. Amer. Math. Soc.},
   volume={18},
   date={2005},
   number={4},
   pages={763--778},
}

\bib{Pom92}{book}{
   author={Pommerenke, C.},
   title={Boundary behaviour of conformal maps},
   series={Grundlehren der Mathematischen Wissenschaften},
   volume={299},
   publisher={Springer-Verlag, Berlin},
   date={1992},
   pages={x+300},
}

\bib{Pom66}{article}{
   author={Pommerenke, C.},
   title={On the Loewner differential equation},
   journal={Michigan Math. J.},
   volume={13},
   date={1966},
   pages={435--443},
}

\bib{RS05}{article}{
   author={Rohde, S.},
   author={Schramm, O.},
   title={Basic properties of SLE},
   journal={Ann. of Math. (2)},
   volume={161},
   date={2005},
   number={2},
   pages={883--924},
   issn={0003-486X},
}

\bib{RTZ13}{article}{
   author={Rohde, S.},
   author={Tran, H.},
   author={Zinsmeister, M.}
   title={The Loewner equation and Lipschitz graphs},
   journal={preprint}
   date={2013}
}

\bib{RW14}{article}{
   author={Rohde, S.},
   author={Wong, C.},
   title={Half-plane capacity and conformal radius},
   journal={Proc. Amer. Math. Soc.},
   volume={142},
   date={2014},
   number={3},
   pages={931--938},
}

\bib{SS90}{article}{
   author={Smith, W.},
   author={Stegenga, D.},
   title={H\"older domains and Poincar\'e domains},
   journal={Trans. Amer. Math. Soc.},
   volume={319},
   date={1990},
   number={1},
   pages={67--100},
}

\bib{Ste70}{book}{
   author={Stein, E.},
   title={Singular integrals and differentiability properties of functions},
   series={Princeton Mathematical Series, No. 30},
   publisher={Princeton University Press, Princeton, N.J.},
   date={1970},
   pages={xiv+290},
}

\bib{Wern11}{article}{
   author={Werness, B.},
   title={Regularity of Schramm-Loewner evolutions, annular crossings, and
   rough path theory},
   journal={Electron. J. Probab.},
   volume={17},
   date={2012},
   number={81},
   pages={1--21},
}

\bib{Wild}{book}{
   author={Wilder, R.L.},
   title={Topology of manifolds},
   series={American Mathematical Society Colloquium Publications},
   volume={32},
   publisher={American Mathematical Society, Providence, R.I.},
   date={1979},
   pages={xiii+403},
}

\bib{Wil10}{article}{
   author={Wildrick, K.},
   title={Quasisymmetric structures on surfaces},
   journal={Trans. Amer. Math. Soc.},
   volume={362},
   date={2010},
   number={2},
   pages={623--659},
}

\bib{Won14}{article}{
   author={Wong, C.},
   title={Smoothness of Loewner slits},
   journal={Trans. Amer. Math. Soc.},
   volume={366},
   date={2014},
   number={3},
   pages={1475--1496},
}

\end{biblist}
\end{bibdiv}

\end{document}